\documentclass[preprint,12pt]{elsarticle}

\usepackage{amsmath,amssymb}
\usepackage{graphicx}
\usepackage{microtype}
\usepackage{multirow}
\usepackage{pifont}
\usepackage{xcolor}
\usepackage{amsthm}
\usepackage{booktabs}
\usepackage{xurl}
\usepackage{comment}
\usepackage{algorithm}
\usepackage{algpseudocode}
\usepackage{etoolbox}
\algrenewcommand\algorithmicrequire{\textbf{Input:}}
\newcommand{\bluecheck}{{\color{teal}\checkmark}}
\newcommand{\xmark}{\color{red}\ding{55}}
\newtheorem{Theorem}{Theorem}

\newtheorem*{AppendixTheorem}{Theorem}
\newtheorem*{AppendixLemma}{Lemma}
\newtheorem{remark}{Remark}

\journal{Journal of Computational and Applied Mathematics}

\begin{document}

\begin{frontmatter}

\title{Deep Learning-based Surrogate Modelling of the LOD Method for Multiscale Problems}

\author[cau]{Marc Haltmayer\fnref{fn1}}
%\ead{haltmayermarc@cau.ac.kr}

\author[cau]{Jaemin Seo\fnref{fn1}}
%\ead{tjwoals217@cau.ac.kr}

\author[cau]{Yuseung Lee}
%\ead{idia0780@cau.ac.kr}

\author[samsung]{Sungyeop Lee}
%\ead{sungyeop.lee@samsung.com}

\author[samsung]{Jaehoon Jeong}
%\ead{jh77.jeong@samsung.com}

\author[cau]{Jae Yong Lee\corref{cor1}}
%\ead{jaeyong@cau.ac.kr}

\cortext[cor1]{Corresponding author.}
\fntext[fn1]{Equal contribution.}

\affiliation[cau]{organization={Department of Artificial Intelligence, Chung-Ang University},
            addressline={84 Heukseok-ro, Dongjak-gu},
            city={Seoul},
            postcode={06974},
            country={Republic of Korea}}

\affiliation[samsung]{organization={Computational Science and Engineering Team, Samsung Electronics},
            addressline={41 Samsung-ro, Giheung-gu},
            city={Yongin-si},
            postcode={17113},
            country={Republic of Korea}}

\begin{abstract}
Multiscale problems are notoriously difficult to tackle using traditional numerical methods, as accurately resolving fine-scale features often requires prohibitively fine discretizations. This challenge is particularly pronounced in applications such as materials science, fluid dynamics, climate systems, chemical processes, and complex networks. Recent neural operator models provide a promising data-driven alternative, but frequently struggle to achieve sufficient accuracy in the presence of strongly heterogeneous or oscillatory coefficients. In this work, we focus on the solution of elliptic PDEs with rough and high-contrast inputs. The Localized Orthogonal Decomposition (LOD) method is a well-established numerical approach for such problems, but it comes, however, at a substantial computational cost. We investigate the performance of popular neural operator architectures on these challenging multiscale problems and identify key limitations in their ability to resolve fine-scale structure. To overcome these challenges, we introduce \textbf{LOD-MSNO} (LOD-Multiscale Neural Operator), a hybrid approach that leverages the LOD method as a strong multiscale prior by building on its representation of the solution as a linear combination of problem-adapted basis functions, while addressing its main computational bottlenecks through data-driven operator learning. We further provide theoretical error estimates for the proposed coefficient-learning framework. Lastly, we demonstrate the potential of our proposed method to outperform current neural operator baselines in terms of accuracy for challenging multiscale inputs, while mainly retaining the computational efficiency of neural operator models.
\end{abstract}

\begin{keyword}
Scientific Machine Learning \sep Multiscale Problems \sep Neural Operator \sep Finite Element Method  
\end{keyword}

\end{frontmatter}

\section{Introduction}

Multiscale PDE problems, characterized by the coexistence of important features across multiple spatial and temporal scales, arise in a wide range of scientific and engineering domains. A representative example is given by Darcy Flow,
\begin{equation}
\label{darcy_eq}
\begin{aligned}
- \nabla \cdot (\kappa \nabla u) &= f \quad &&\text{in } D, \\
u &= 0 \quad &&\text{on } \partial D,
\end{aligned}
\end{equation}
where $D \subset \mathbb{R}^d$, $d = 2,3$ is a bounded physical domain, $u : D \to \mathbb{R}$ denotes the pressure, $f \in L^2(D)$ is a given source term and $\kappa : D \to \mathbb{R}_{+}$ is a heterogeneous permeability coefficient. The Darcy Flow equation can be viewed as a heterogeneous extension of Poisson's equation, which arises naturally across a range of physical settings involving transport through complex media. For example, it describes fluid flow in porous subsurface \citep{bear1988dynamics} formations, while analogous formulations govern heat conduction in materials \citep{Yang2024} with spatially varying thermal properties and potential fields in electrically inhomogeneous media \citep{KIRKHAM2023431}. Across these problems, the spatial variability of the coefficient field plays a central role, as even fine-scale heterogeneities can significantly influence the global solution behavior, leading to pronounced multiscale effects. This interplay between fine-scale structure and macroscopic response poses significant challenges for both modeling and computation, often requiring high-resolution simulations to accurately capture the underlying physics. Such demands are particularly critical in practical engineering workflows, where repeated evaluations under varying material configurations or design parameters are often required. As a result, developing accurate and efficient solution strategies for this class of problems remains a key objective in both scientific computing and industrial applications~\cite{efendiev2009multiscale,hou1997multiscale}. 

\begin{figure*}[t]
  \centering
    \includegraphics[width=0.99\linewidth]{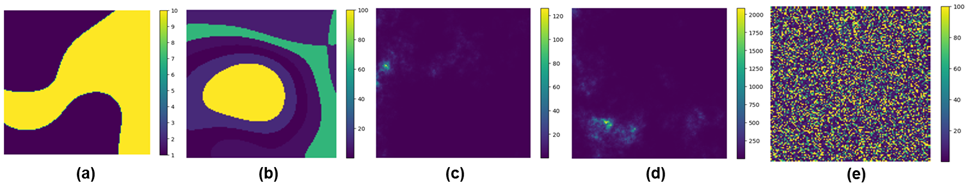}
    \caption{Example permeability fields $\kappa(x)$. (a) Two-phase piecewise-constant as considered in many related works. (b) Six-phase piecewise-constant. (c) lognormal random field. (d) High-contrast lognormal random field spanning several orders of magnitude, yielding a particularly difficult regime due to extreme coefficient variation.  (e) Cell-wise i.i.d.\ discrete random field sampled from a finite level set.}
  \label{fig:Darcy_coeffs}
\end{figure*} 
%Explicitly resolving all fine-scale features of $\kappa$ on a sufficiently fine mesh leads to large-scale discretizations with millions or billions of degrees of freedom, rendering direct numerical simulation prohibitively expensive in terms of memory and computational cost. This challenge is further exacerbated when the problem must be solved repeatedly for different such permeability fields $\kappa$. Consequently, there is a growing need to surrogate such numerical simulations using neural networks in order to overcome these computational limitations. 
In this context, neural operator methods have emerged as promising alternatives, as they aim to learn mappings between infinite-dimensional function spaces and thereby approximate solution operators of parametric PDEs directly. However, despite their favorable computational properties and remarkable success in learning solution operators of parametric PDEs, neural operator models exhibit important limitations when applied to multiscale problems with rough and strongly heterogeneous inputs. Much of the existing neural operator literature focuses on relatively simplified flow configurations; for instance, benchmark studies involving Fourier Neural Operator (FNO) \citep{li2021fourierneuraloperatorparametric}, Deep Operator Network (DeepONet) \citep{fair}, and Transformer-based architectures such as Transolver \citep{Transolver}, and related approaches frequently consider two-phase Darcy flow problems with moderately varying permeability fields (Figure~\ref{fig:Darcy_coeffs}(a)). While these settings provide useful testbeds for model development and evaluation for such models, they do not fully reflect the complexity encountered in realistic subsurface flow applications. In particular, multi-phase Darcy flow in highly heterogeneous and highly oscillatory media (Figure~\ref{fig:Darcy_coeffs}(b)-(e)) remains largely unexplored in the neural operator community, despite its relevance in groundwater hydrology, petroleum engineering, and carbon sequestration.

% \textbf{Contributions.}
\paragraph{Contributions}
In this work, we address these gaps and make the following contributions.

\begin{itemize}
    \item We identify a key shortcoming of existing neural operator models when applied to multiscale problems with rough, high-contrast coefficients. Using Darcy flow as a prototypical example, we demonstrate the challenges posed by highly heterogeneous and multiscale permeability fields through a series of carefully designed datasets inspired by applications in industry that exhibit complex fine-scale features.
    \item We propose \textbf{LOD-MSNO}, an operator learning model specifically designed for multiscale PDE problems that follows a coefficient-learning paradigm. The approach adopts a hybrid strategy by using the finite element method-based Localized Orthogonal Decomposition (LOD) approximation as a strong numerical prior and employing neural networks to learn the associated multiscale basis functions and expansion coefficients.
    \item We provide a theoretical analysis showing that the proposed model converges to the true numerical LOD solution. As our main theorem, we derive a precise error bound with respect to the true analytical solution, separating the LOD discretization error from the generalization error of the neural surrogate.
    \item We demonstrate that our method can outperform established neural operator benchmarks on the proposed multiscale datasets. These results validate the feasibility of our idea and highlight its potential for reliable and efficient surrogate modeling of multiscale PDE problems.
\end{itemize}

\section{Related work}

\paragraph{\textbf{Multiscale numerical methods}}
The numerical approximation of PDEs with highly heterogeneous and multiscale coefficients has been extensively studied in computational mathematics. Classical \emph{multiscale methods} \citep{pavliotis2008multiscale} and \emph{numerical homogenization} \citep{blanc2023homogenization} techniques aim to derive effective coarse-scale models that capture fine-scale effects implicitly. While successful in settings with clear scale separation, their performance degrades when coefficients exhibit strong local variations or lack periodicity. Localized approaches, most notably the Localized Orthogonal Decomposition (LOD) \citep{lod_book} method, address these limitations by enriching coarse finite element spaces with localized correctors that encode fine-scale information. LOD methods provide rigorous error estimates for elliptic \citep{lodoriginal} and parabolic  \citep{parab} problems with rough and high-contrast coefficients but require the solution of many localized fine-scale problems, which can be computationally expensive in many-query settings.

\paragraph{\textbf{Physics-informed machine learning}}
Physics-informed learning approaches such as \emph{Physics-Informed Neural Networks} (PINNs) \citep{RAISSI2019686} and the closely related \emph{Deep Galerkin Method} \citep{SIRIGNANO20181339} embed the governing PDEs into the training objective through residual minimization. These methods offer mesh-free approximations and flexibility with respect to geometry and boundary conditions. However, they often suffer from optimization issues and have to be retrained for every input instance.

\paragraph{\textbf{Neural operators}}
Neural operator methods aim to learn mappings between function spaces directly. \emph{DeepONet} \citep{Lu_2021} represents operators as linear combinations of a branch and trunk network, with extensions such as \emph{Physics-informed DeepONet (PI-DeepONet)} \citep{Goswami2022PhysicsInformedDN} or \emph{HyperDeepONet} \citep{lee2023hyperdeeponetlearningoperatorcomplex}. A complementary class of methods is based on spectral representations. The \emph{Fourier Neural Operator (FNO)} \citep{li2021fourierneuraloperatorparametric} and its variants, including \emph{Convolutional Neural Operators (CNO)} \citep{raonic2023convolutional}, \emph{U-shaped Neural Operators (UNO)} \citep{rahman2022unet} and the physics-informed \emph{PINO} \citep{PINO}, have demonstrated strong performance on parametric PDE benchmarks. However, their reliance on global spectral representations can limit their ability to resolve localized fine-scale features in highly heterogeneous and multiscale settings. Other work has explored attention-based and graph-based operator learning~\cite{cao2021choose,wang2025cvit,pmlr-v235-hao24d}; for example, \emph{Transolver} \citep{Transolver} is a representative example that leverages Transformer architectures to model long-range dependencies. %while \emph{Graph Neural Operator Transformers} \citep{pmlr-v202-hao23c} extend operator learning to unstructured meshes.

% \paragraph{\textbf{Hybrid approaches/Coefficient Learning}}
\paragraph{\textbf{Hybrid numerical and machine learning approaches}}
Recent work has explored hybrid approaches that combine classical numerical methods, such as finite element or spectral discretizations, with neural networks in order to leverage the strengths of both paradigms \citep{fanaskov2023spectral}. In these methods, the solution is represented in a fixed basis, and the learning task is to infer the corresponding expansion coefficients. For example, Finite element operator network (FEONet) \citep{lee2025feonet} integrates neural networks with the continuous Galerkin finite element method by directly learning solution coefficients, enabling physics-aware training without requiring labeled data. Related spectral approaches, such as SCLON \citep{Speconet} and ULGNet \citep{ulg}, learn coefficients in spectral or Galerkin bases, further demonstrating the effectiveness of combining classical numerical schemes with neural networks as a hybrid model for PDEs. Other hybrid AI-based approaches have also demonstrated substantial computational speedups and improved flexibility, with successful applications reported in areas such as solid mechanics \citep{kalina2023feann, yizheng2026pretraining}, fluid dynamics \citep{ricardo2022enhancing}, climate science \citep{gentine2018convection}, and biomedical systems \citep{peng2021multiscale}.

\section{Preliminaries}

\subsection{Problem setting and Finite Element approximation}

Let $D \subset \mathbb{R}^d$, $d \in \{2,3\}$, be a bounded polygonal (or polyhedral) Lipschitz domain with boundary $\partial D$. We consider the Darcy flow problem with homogeneous Dirichlet boundary conditions: Given a permeability field $\kappa$, find $u \colon D \to \mathbb{R}$ such that
\begin{equation}
\label{eq:darcy_prelim}
\begin{aligned}
- \nabla \cdot (\kappa \nabla u) &= f \quad &&\text{in } D, \\
u &= 0 \quad &&\text{on } \partial D,
\end{aligned}
\end{equation}
where $f \in L^2(D)$ and $\kappa \colon D \to \mathbb{R}^{d \times d}$. Throughout this work, we assume that $\kappa$ is uniformly elliptic and almost everywhere bounded. 
\begin{equation*}
0 < \operatorname*{ess\,inf}_{x\in D} \kappa(x)
\leq
\kappa(x)
\leq
\operatorname*{ess\,sup}_{x\in D} \kappa(x)
< \infty
\qquad \text{for a.e. } x\in D .
\end{equation*}
However, no further smoothness or scale separation assumptions on $\kappa$ are imposed; in particular, $\kappa$ may exhibit highly oscillatory or high-contrast behavior, i.e. the \textit{contrast} of the permeability field $\kappa$ defined by
\begin{equation*}
    \operatorname{contrast}(\kappa)
:=
\frac{
\operatorname*{ess\,sup}_{x\in D} \kappa(x)
}{
\operatorname*{ess\,inf}_{x\in D} \kappa(x)
}
\end{equation*}
might have a very high value, which increases the difficulty of solving (\ref{eq:darcy_prelim}) numerically. For the approximation of this problem using FEM-based methods, we introduce two conforming simplicial triangulations of $D$. Let $\mathcal{T}_H$ denote a coarse triangulation with maximum element diameter $H > 0$, and let $\mathcal{T}_h$ be a fine triangulation with maximum element diameter $h < H$. We assume that $\mathcal{T}_h$ is a (possibly non-uniform) refinement of $\mathcal{T}_H$ and that $\overline{D} = \bigcup_{K \in \mathcal{T}_H} K
= \bigcup_{K_h \in \mathcal{T}_h} K_h$. The fine mesh $\mathcal{T}_h$ is chosen sufficiently small to resolve all relevant variations of the diffusion coefficient $\kappa$ and is therefore referred to as a fine-scale discretization. We define the standard conforming $P_1$ finite element spaces
\begin{equation*}
P_1(\mathcal{T}) := \left\{ v \in C^0(D) \;\middle|\; v|_K \text{ is affine for all } K \in \mathcal{T} \right\},
\end{equation*}
and set
\begin{equation*}
V_h := P_1(\mathcal{T}_h) \cap H_0^1(D),
\quad
V_H := P_1(\mathcal{T}_H) \cap H_0^1(D),
\end{equation*}
with $V_H \subset V_h$. Let $\mathcal{N}_H = \{ Z_i \mid 0 \le i \le N_H - 1 \}$ and $\mathcal{N}_h = \{ z_j \mid 0 \le j \le N_h - 1 \}$
denote the sets of interior vertices (nodes) of the coarse and fine meshes, respectively, where $N_H$ and $N_h$ are their cardinalities. We denote by
\begin{equation*}
\{ \Phi_H^i \}_{i=1}^{N_H} \subset V_H,
\qquad
\{ \phi_h^j \}_{j=1}^{N_h} \subset V_h,
\end{equation*}
the associated nodal basis functions. A fine-scale finite element approximation is obtained by seeking $u_h \in V_h$ such that
\begin{equation}
\label{eq:fine_fem}
a(u_h, v_h) = \int_\Omega f v_h \, dx
\quad \text{for all } v_h \in V_h.
\end{equation}
Writing $u_h = \sum_{i=0}^{N_h-1} \boldsymbol{u}_i \phi_h^i$, this problem is equivalent to solving the linear system $\mathbf{A}\boldsymbol{u} = \mathbf{f}$ with $\boldsymbol{A}_{ij} = a(\phi_h^i,\phi_h^j)$ and $\boldsymbol{f}_i = \int_\Omega f \phi_h^i , dx$. As already indicated, in order to accurately resolve the fine-scale features of a potentially highly-varying and oscillatory coefficient $\kappa$, the mesh size $h$ must typically be chosen very small, which may result in prohibitively high computational cost for solving the linear system. On the other hand, a coarse-scale finite element approximation based on $V_H$ generally fails to capture the influence of the unresolved fine-scale variations in $\kappa$ and therefore leads to poor accuracy.

\subsection{Localized orthogonal decomposition (LOD)}

The LOD method, originally proposed in \citep{lodoriginal}, is a finite element method (FEM) designed to overcome this difficulty by constructing problem-adapted multiscale basis functions that combine the computational efficiency of coarse discretizations with the accuracy of fine-scale resolution.

\subsubsection{Orthogonal decomposition of scales}

The central idea of the LOD method is to construct an accurate approximation of the solution in a \emph{low-dimensional} space of the same dimension as a coarse finite element space $V_H$, while enriching this space with fine-scale information from a high-dimensional space $V_h$. To this end, first define a projection (or quasi-interpolation) operator $\mathcal{I}_H \colon V \to V_H$, which projects functions from the energy space $V = H_0^1(D)$ onto its associated counterpart in the coarse finite element space $V_H$. The operator $\mathcal{I}_H$ is assumed to be a projection in the algebraic sense, i.e.,
\begin{equation*}
    \mathcal{I}_H \circ \mathcal{I}_H = \mathcal{I}_H,
\end{equation*}
and to satisfy appropriate stability properties~\citep{lod_book}. Several realizations of such a projection are possible. Typical examples include
the global $L^2$-orthogonal projection onto the coarse finite element space
$V_{H}$, defined by
\[
(\mathcal{I}_H v, \Phi_H)_{L^2(\Omega)} = (v, \Phi_H)_{L^2(\Omega)}
\quad \text{for all } \Phi_H \in V_{H},
\]
as well as locally defined $L^2$-projections based on nodal patches. These local
constructions exploit projections onto polynomial spaces restricted to coarse
neighborhoods and lead to operators that satisfy the required stability properties
on quasi-uniform meshes. In contrast, nodal interpolation operators generally fail
to provide sufficient stability in the $H^1$-norm and are therefore not suitable
for the LOD framework; see \citep{lod_book} for a detailed discussion of admissible choices.  We define the space of microscopic fine-scale functions as
\begin{equation*}
W := \ker(\mathcal{I}_H)
= \left\{ v \in V \;\middle|\; \mathcal{I}_H(v) = 0 \right\}.
\end{equation*}
The space $W$ contains all functions that encode fine-scale features that cannot be represented in the coarse space $V_H$ and yields an orthogonal decomposition $V = V_H \oplus W$ w.r.t. the standard inner product on $V$. In order to construct an improved approximation space compared to $V_H$, we exploit the \textit{problem-specific inner product} on $V$ induced by the bilinear form
\begin{equation*}
a(u,v) := \int_D \kappa \nabla u \cdot \nabla v \, dx.
\end{equation*}
and define the improved multiscale approximation space as
\begin{equation*}
V_H^{\mathrm{ms}}
:= \left\{ v_H^{\mathrm{ms}} \in V \;\middle|\;
a(v_H^{\mathrm{ms}}, w) = 0 \;\; \text{for all } w \in W \right\},
\end{equation*}
which is nothing else but the orthogonal complement of the subspace $W$ in $V$ with respect to the inner product $a$, leading to the $\mathcal{A}$-orthogonal decomposition $V = V_H^{\mathrm{ms}} \oplus W$. By construction, $V_H^{\mathrm{ms}}$ has the same dimension as $V_H$ and consists of functions that are $\mathcal{A}$-orthogonal to all unresolved fine-scale components. Based on this decomposition, we define the \emph{ideal numerical homogenization method} as follows: given $f \in L^2(D)$, find $u_H^{\mathrm{ms}} \in V_H^{\mathrm{ms}}$ such that
\begin{equation}
a(u_H^{\mathrm{ms}}, v)
= \int_D f v \, dx
\quad \text{for all } v \in V_H^{\mathrm{ms}}.
\end{equation}
It is ideal in the sense that $V^{ms}_H$ contains all coarse-resolvable fine-scale features of the multiscale coefficient $\kappa$.

\subsubsection{Practical realization via localization}

\paragraph{Finite-dimensional realization of the LOD method}

The ideal numerical homogenization method introduced above is formulated in the infinite-dimensional multiscale space $V_H^{\mathrm{ms}} \subset V$. In order to make this formulation computationally feasible, we next clarify how the corresponding variational problem can be solved in practice. To this end, consider the \emph{correction operator} $Q \colon V \to W$, defined as the $a(\cdot,\cdot)$-orthogonal projection onto the detail space $W = \ker(\mathcal{I}_H)$. More precisely, for a given $v \in V$, the correction $Qv \in W$ is defined as the unique solution of
\begin{equation}
\label{correction_Def}
a(Qv, w) = a(v, w)
\quad \text{for all } w \in W.
\end{equation}
By construction, $(1 - Q)v$ is $a$-orthogonal to $W$. One can show that the operator $(1 - Q) \colon V_H \to V_H^{\mathrm{ms}}$ is bijective, with inverse given by the projection $\mathcal{I}_H$. Consequently, every function $v_H^{\mathrm{ms}} \in V_H^{\mathrm{ms}}$ can be written uniquely as $v_H^{\mathrm{ms}} = (1 - Q) v_H$ for some $v_H \in V_H$. Using this observation, the ideal multiscale problem can be reformulated as follows: find $u_H \in V_H$ such that
\begin{equation*}
a\bigl((1 - Q) u_H, (1 - Q) v_H\bigr)
= \int_D f (1 - Q) v_H \, dx
\end{equation*}
for all $v_H \in V_H$. This formulation is more favorable from a computational perspective, as it depends on the discrete, low-dimensional unknown $u_H \in V_H$ instead of the infinite-dimensional object $u_H^{\mathrm{ms}} \in V_H^{\mathrm{ms}}$. 
%In particular, by linearity, the previous equation means that the approximation is sought in the space
%\begin{equation}
%V_H^{\mathrm{ms}} = \operatorname{span}
%\left\{ \Phi_H^i - Q(\Phi_H^i) \;\middle|\; Z_i \in \mathcal{N}_H \right\},
%\end{equation}
%which consists of the coarse nodal basis functions $\Phi_H^{i}$ corrected by fine-scale information from $Q(\Phi_H^i)$. 

\paragraph{Practical computation via localization}
In practice, the numerical construction of the correction operator $Q$ itself is localized to small patches around each coarse element in order to make the method computationally feasible. This localization is motivated by the exponential decay of the corrector functions away from their associated coarse elements; see Chapter 4 in \citep{lod_book}. For a given coarse element \(T \in \mathcal{T}_H\), we define a patch \(\mathcal{N}^\ell(T)\) consisting of all coarse elements within \(\ell\) layers of \(T\) recursively by
\begin{align*}
\mathcal{N}^0(T) &:= T, \\
\mathcal{N}^\ell(T)
&:= \bigcup \left\{
T_H \in \mathcal{T}_H \;\middle|\;
T_H \cap \mathcal{N}^{\ell-1}(T) \neq \emptyset
\right\},
\quad \ell \in \mathbb{N}.
\end{align*}
and compute the localized correction operator \(Q_h^\ell\) by solving fine-scale elliptic problems of the form \eqref{correction_Def} on these patches.
To be precise, let $W^\ell(T) \subset W$ be the space consisting of functions in $W$ supported on $\mathcal{N}^\ell(T)$ and extended by zero
outside this region. The localized element corrector
\[
Q_T^\ell : V \rightarrow W^\ell(T)
\]
is then defined as the solution of
\begin{equation*}
a(Q_T^\ell v, w)
= \int_T \kappa \nabla v \cdot \nabla w \, dx
\quad \text{for all } w \in W^\ell(T).
\label{eq:localized_element_corrector}
\end{equation*}

For practical computations, all spaces are discretized using the fine-scale finite
element space $V_h$. For a given $v_h \in V_h$, the fully discrete localized element
corrector
\[
Q_{T,h}^\ell(v_h) \in W^\ell(T) \cap V_h
\]
is defined as the solution of
\begin{equation}
\label{eq:discrete_localized_corrector}
a\bigl(Q_{T,h}^\ell(v_h), w_h\bigr)
= \int_T \kappa \nabla v_h \cdot \nabla w_h \, dx
\quad \text{for all } w_h \in W^\ell(T) \cap V_h.
\end{equation}
The corresponding discrete localized correction operator is then given by
\begin{equation*}
Q_h^\ell := \sum_{T \in \mathcal{T}_H} Q_{T,h}^\ell.
\end{equation*}

Finally, the fully discrete Localized Orthogonal Decomposition method reads as
follows: given $f \in L^2(D)$, find
$u_{H,\ell,h}^{\mathrm{ms}} \in V_{H,\ell}^{\mathrm{ms},h}$ such that
\begin{equation}
\label{eq:variational_lod_method}
a(u_{H,\ell,h}^{\mathrm{ms}}, v)
= \int_D f v \, dx
\quad \text{for all } v \in V_{H,\ell}^{\mathrm{ms},h},
\end{equation}
where the multiscale space is defined by
\begin{equation*}
V_{H,\ell}^{\mathrm{ms},h}
:= \operatorname{span}
\left\{
\Phi_H^i - Q_h^\ell(\Phi_H^i)
\;\middle|\; Z_i \in \mathcal{N}_H
\right\}.
\end{equation*}
This space has the same dimension as the coarse finite element space $V_H$, and the
method can be interpreted as a Galerkin approximation of the original problem in a
problem-adapted multiscale space. Both the correction operator $Q_h^\ell$ and the
final solution $u_{H,\ell,h}^{\mathrm{ms}}$ can be obtained by assembling and solving
the corresponding linear systems; see \citep{ENGWER2019123} for implementation
details. In what follows, we shall omit the coarse and fine mesh sizes $H$ and $h$ and the localization parameter $\ell$ and call the solution $u_{H,\ell,h}^{\mathrm{ms}}$ to (\ref{eq:variational_lod_method}) simply $u_{\mathrm{LOD}}$. 

%The global localized correction operator is obtained by aggregating the contributions over all coarse elements: $\displaystyle Q_h^\ell := \sum_{T \in \mathcal{T}_H} Q_{T,h}^\ell$. Using this operator, the fully discrete multiscale finite element space is defined as $V_{H,\ell}^{\mathrm{ms},h} =
%\operatorname{span}
%\left\{
%\Phi_H^j - Q_h^\ell(\Phi_H^j)
%\right\}$ where \(\{\Phi_H^j\}\) denotes the standard coarse finite element basis. This space has the same dimension as the coarse space but is enriched with fine-scale information induced by the heterogeneous coefficient. The final LOD solution is obtained by performing a Galerkin projection of the original problem onto this problem-adapted multiscale space.

\subsection{Limitations of the LOD method}
\label{sec:lod_limitations}

Despite its effectiveness for multiscale problems without any assumption about the periodicity of $\kappa$, the LOD method suffers from notable computational limitations. In particular, LOD-based discretization requires the solution of $|\mathcal{T}_H|$ fine-scale auxiliary problems in overlapping subdomains $\mathcal{N}^\ell(T)$ for each realization of the random coefficient. As the localization parameter $\ell$ increases, these subdomains grow in size and overlap more strongly, leading to a rapidly increasing computational cost for the construction of the correction operator $Q_h^{\ell}$. However, theoretical accuracy results typically require choosing $\ell \approx C \lvert \log H \rvert$ for some constant $C>0$, which may result in relatively large localization patches in practice. This renders the computation of the correctors a major computational bottleneck. 

Moreover, the corrected basis functions lose the local support of standard finite element nodal basis functions: Instead of being supported on a single coarse element $T$, the LOD basis functions have support on the entire patch $\mathcal{N}^\ell(S)$, thereby significantly polluting the sparsity structure of the resulting system matrix. This effect is further exacerbated in settings where the resulting sparsity-polluted linear systems must be solved repeatedly. These considerations motivate the development of fully surrogate-based models that bypass both the explicit computation of the basis corrections $Q_{h}^{\ell}$ and the solution of the associated dense or weakly sparse linear systems.

\section{Proposed method: LOD Multiscale Neural Operator (LOD-MSNO)}

The overall goal of LOD-MSNO is to learn the solution operator
\[
\mathcal{G}: \kappa \mapsto u,
\]
for the multiscale Darcy problem by using neural networks to surrogate the computationally expensive components of the LOD method. We assume the permeability is a random field $\kappa=\kappa(\omega)$ on a probability space $(\Omega,\mathcal{F},\mathbb{P})$, where $\Omega$ is assumed to be compact and $\omega\in\Omega$ parameterizes one realization of the coefficient field. In the experiments, $\kappa$ may be generated from Gaussian or lognormal random fields; the specific input distributions we used are described in Section~\ref{sec:experiments}. Accordingly, both the input permeability field and solution are interpreted as Bochner-type functions written as $\kappa(x,\omega)$ and $u(x,\omega)$ for $(x,\omega) \in D \times \Omega$.

%The neural network takes the sampled coefficient realization, equivalently its parameter $\omega$, as input and returns a surrogate of the corresponding LOD representation of $u(\cdot,\omega)$.

%The LOD method achieves high accuracy for elliptic problems with rough and high-contrast coefficients by constructing problem-adapted coarse spaces. 
Central to the LOD method are the \emph{corrected FEM basis functions}
\begin{equation}
\label{eq:lod_basis}
\Psi_H^j(\omega) := \Phi_H^j - Q_{h,\kappa(\omega)}^{\ell}(\Phi_H^j), \qquad j = 1,\dots,N_H,
\end{equation}
where $\{\Phi_H^j\}$ denotes the standard coarse nodal basis functions and $Q_{h,\kappa(\omega)}^{\ell}$ is the localized correction operator computed from fine-scale problems posed on overlapping patches of diameter proportional to the localization parameter $\ell$. %As they are explicitly adapted to the permability field $\kappa(\omega)$ via (\ref{eq:discrete_localized_corrector}), the basis functions also depend on the realization through $\omega$. 
The dependence on $\omega$ enters through the bilinear form
\[
a_{\omega}(v,w) := \int_D \kappa(x,\omega)\nabla v\cdot\nabla w\,dx,
\]
which defines the localized corrector problems. Consequently, the corrected basis functions $\Psi_H^j(\omega)$ are problem-adapted and depend on the realization of the coefficient field. Using the corrected basis $\{\Psi_H^j(\omega)\}$, the LOD solution is represented as
\begin{equation}
\label{lod_linearcombi}
u_{\mathrm{LOD}}(x,\omega) = \sum_{j=1}^{N_H} \boldsymbol{u}_{\mathrm{LOD}}^{(j)}(\omega) \Psi_H^j(x,\omega),
\end{equation}
where the coefficient vector $\boldsymbol{u}_{\mathrm{LOD}}(\omega) \in \mathbb{R}^{N_H}$ is obtained by solving the linear system
\begin{equation}
\label{eq:lod_linear_eq}
    \boldsymbol{A}_{\mathrm{LOD}}(\omega) \boldsymbol{u}_{\mathrm{LOD}}(\omega) = \boldsymbol{f}_{\mathrm{LOD}}(\omega).
\end{equation}
Here, $\boldsymbol{A}_{\mathrm{LOD}}(\omega)$ is the LOD stiffness matrix and $\boldsymbol{f}_{\mathrm{LOD}}(\omega)$ is the LOD load vector, with entries
\[
{\boldsymbol{A}_{\mathrm{LOD}}(\omega)}_{ij}
:= a_{\omega}(\Psi_H^i(\omega), \Psi_H^j(\omega)),
\qquad
{\boldsymbol{f}_{\mathrm{LOD}}(\omega)}_i
:= \int_D f\Psi_H^i(x,\omega)\,dx.
\]
LOD-MSNO targets this parameter-to-solution map by predicting the coefficient vector $\boldsymbol{u}_{\mathrm{LOD}}(\omega)$ directly with a neural network, while avoiding explicit assembly and solution of \eqref{eq:lod_linear_eq} at inference time. At the same time, surrogating the construction of the correction operator $Q_{h,\kappa(\omega)}^{\ell}$---and thereby obtaining the corrected multiscale basis functions $\Psi_H^i(\omega)$---poses a separate and equally important challenge.

\subsection{LOD-MSNO (Coeff): Learning the coefficients}

\begin{figure*}[t]
  \centering
  \includegraphics[width=\textwidth]{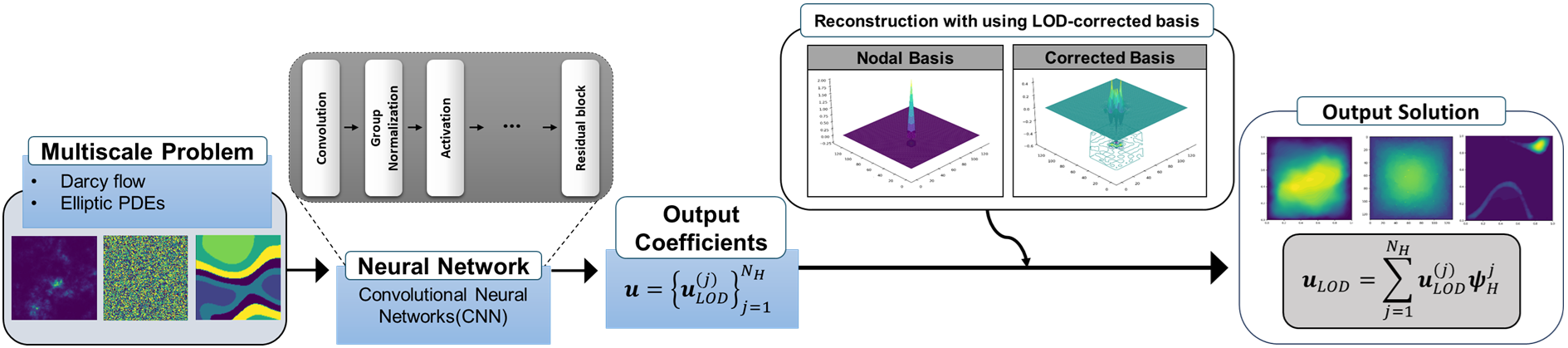}
  \caption{Schematic of the proposed LOD-MSNO. Neural network predicts the coarse LOD coefficient vector.}
  \label{fig:wide}
\end{figure*}

The first component of our surrogate focuses on learning the coefficient map
\[
\omega \mapsto \boldsymbol{u}_{\mathrm{LOD}}(\omega),
\]
or equivalently $\kappa(\cdot,\omega)\mapsto \boldsymbol{u}_{\mathrm{LOD}}(\omega)$, without assembling and solving \eqref{eq:lod_linear_eq} explicitly at inference time. To train this neural network surrogate, we draw i.i.d. realizations $\omega_i\in\Omega$ and evaluate the corresponding permeability fields $\kappa_i:=\kappa(\cdot,\omega_i)$. We then construct a dataset of tuples
\[
\{( \boldsymbol{A}_{\mathrm{LOD}}(\omega_i),
\boldsymbol{u}_{\mathrm{LOD}}(\omega_i) )\}_{i=1}^M,
\]
where $\boldsymbol{A}_{\mathrm{LOD}}(\omega_i)$ denotes the LOD stiffness matrix and $\boldsymbol{u}_{\mathrm{LOD}}(\omega_i)$ denotes the ground-truth coefficient vector for the realization $\kappa(\cdot,\omega_i)$. Since the space $V_{H,\ell}^{\mathrm{ms},h}(\omega_i) =
\operatorname{span}
\left\{
\Psi_H^j(\omega_i)
\right\}$ is typically low-dimensional, storing the matrices $\boldsymbol{A}_{\mathrm{LOD}}(\omega_i)$ remains feasible in terms of memory consumption. Moreover, these matrices have to be computed during data generation to obtain the ground-truth coefficients $\boldsymbol{u}_{\mathrm{LOD}}(\omega_i)$.
%The neural network is trained to predict an approximation $\widehat{\boldsymbol{u}}_{\mathrm{LOD}}(\kappa_i)$ of the coefficient vector directly from the multiscale input data $\kappa_i$
Rather than employing a standard $L^2$- or mean-square-based regression loss, we adopt an energy-based loss that reflects the variational structure of the underlying elliptic problem. At the population level, for a predicted coefficient map $\boldsymbol{u}:\Omega\to\mathbb{R}^{N_H}$, we define
\begin{equation}
\label{eq:energy_loss_population}
\mathcal{L}_{E}(\boldsymbol{u})
:=
\left\|
\boldsymbol{A}_{\mathrm{LOD}}^{1/2}(\omega)
\bigl(\boldsymbol{u}(\omega)-\boldsymbol{u}_{\mathrm{LOD}}(\omega)\bigr)
\right\|_{L^1(\Omega)}.
\end{equation}
In practice, this expectation over the probability space is replaced by a Monte Carlo approximation using i.i.d. samples $\omega_i\sim\mathbb{P}$, which yields the empirical loss
\begin{equation}
\label{eq:energy_loss}
\mathcal{L}_{E,M}(\boldsymbol{u})
:=
\frac{1}{M}\sum_{i=1}^M
\left|
\boldsymbol{A}_{\mathrm{LOD}}^{1/2}(\omega_i)
\bigl(\boldsymbol{u}(\omega_i)-\boldsymbol{u}_{\mathrm{LOD}}(\omega_i)\bigr)
\right|.
\end{equation}

The choice of the energy-based loss \eqref{eq:energy_loss} is directly motivated by the variational formulation of the underlying elliptic PDE. At the discrete level, this variational principle is associated with the (discrete) energy norm of a coefficient vector $\boldsymbol{v}$, which is given by
\[
\|\boldsymbol{v}\|_{\boldsymbol{A}_{\mathrm{LOD}}}^2
=
\boldsymbol{v}^{\top} \boldsymbol{A}_{\mathrm{LOD}} \boldsymbol{v}.
\]
Consequently, the proposed loss \eqref{eq:energy_loss} measures the error between the predicted and true coefficients in the natural energy norm associated with the elliptic operator. We hypothesize that employing this energy-based loss can be beneficial for generalization, as it is more closely aligned with the variational principle underlying the PDE. 
We empirically observe that training with the energy-based loss \eqref{eq:energy_loss} leads to improved accuracy and better generalization, particularly for coefficients exhibiting strong heterogeneity and high contrast. An ablation study comparing different loss functions is provided in Section~\ref{sec:ablations}. In the next section, we develop an error estimate for the LOD-MSNO (Coeff) model and analyze how using the energy-based loss affects its convergence properties.

\subsection{Error analysis for LOD-MSNO (Coeff)}
\label{subsec:theoretical_analysis}

Throughout this subsection, we assume that the corrected LOD basis functions
$\{\Psi_H^j(\omega)\}_{j=1}^{N_H}$ are available for each realization
$\omega\in\Omega$.
% Thus, the learning task considered here concerns only the coefficient map in the fixed, coefficient-dependent LOD basis.
Hence, the basis functions are treated as given, and the learning task reduces to approximating the coefficient map associated with the LOD representation.
Let $\mathcal{N}$ be any class of $\mathbb{R}^{N_H}$-valued functions parameterized by neural networks. We distinguish the following objects:
\begin{itemize}
    \item $u(x,\omega)$ denotes the true solution of the Darcy problem~\eqref{darcy_eq}.
    \item $u_{\mathrm{LOD}}(x,\omega)$ denotes the numerical LOD solution obtained as a linear combination of the LOD basis by solving the linear system~\eqref{eq:lod_linear_eq}.
    \item $\widehat{u}_{\mathrm{LOD}}^{E}(x,\omega)$ denotes the solution reconstructed from coefficients predicted by a network trained with the population loss $\mathcal{L}_{E}$ in~\eqref{eq:energy_loss_population}.
    \item $\widehat{u}_{\mathrm{LOD}}^{E,M}(x,\omega)$ denotes the actual LOD-MSNO (Coeff) prediction reconstructed from coefficients trained with the empirical Monte Carlo loss $\mathcal{L}_{E,M}$ in~\eqref{eq:energy_loss}.
\end{itemize}
More precisely, if
\[
\widehat{\boldsymbol{u}}_{\mathrm{LOD}}^{E}
\in
\operatorname*{argmin}_{\boldsymbol{u}\in\mathcal{N}}\mathcal{L}_{E}(\boldsymbol{u}),
\qquad
\widehat{\boldsymbol{u}}_{\mathrm{LOD}}^{E,M}
\in
\operatorname*{argmin}_{\boldsymbol{u}\in\mathcal{N}}\mathcal{L}_{E,M}(\boldsymbol{u}),
\]
then the corresponding reconstructed LOD solutions are
\begin{align*}
\widehat{u}_{\mathrm{LOD}}^{E}(x,\omega)
&:=
\sum_{j=1}^{N_H}
\widehat{\boldsymbol{u}}_{\mathrm{LOD}}^{E,(j)}(\omega)\Psi_H^j(x,\omega),\\
\widehat{u}_{\mathrm{LOD}}^{E,M}(x,\omega)
&:=
\sum_{j=1}^{N_H}
\widehat{\boldsymbol{u}}_{\mathrm{LOD}}^{E,M,(j)}(\omega)\Psi_H^j(x,\omega).
\end{align*}
In particular, throughout this subsection, bold symbols denote coefficient vectors, whereas non-bold symbols denote the corresponding (reconstructed) functions. The goal of the following analysis is to quantify the discrepancy between the true solution and the learned LOD-MSNO (Coeff) prediction. To this end, we decompose the total error into three contributions:
\begin{equation}
\label{eq:error_decomposition}
\begin{aligned}
u(x,\omega)-\widehat{u}_{\mathrm{LOD}}^{E,M}(x,\omega)
={}&
\underbrace{u(x,\omega)-u_{\mathrm{LOD}}(x,\omega)}_{\text{(I) LOD approximation error}} \\
&+\underbrace{u_{\mathrm{LOD}}(x,\omega)-\widehat{u}_{\mathrm{LOD}}^{E}(x,\omega)}_{\text{(II) population learning error}} \\
&+\underbrace{\widehat{u}_{\mathrm{LOD}}^{E}(x,\omega)-\widehat{u}_{\mathrm{LOD}}^{E,M}(x,\omega)}_{\text{(III) empirical/generalization error}}.
\end{aligned}
\end{equation}
The three terms correspond to the LOD discretization error, the approximation error induced by the neural network class, and the generalization error arising from finite sampling. The first term is controlled by the classical LOD approximation theory. We state the corresponding result here:
\begin{Theorem}[LOD approximation error~\citep{lod_book}]
\label{thm:lod_approximation_error}
Assume that the coefficient is uniformly elliptic, i.e.\ $0<\alpha\le \kappa(x,\omega)\le \beta<\infty$. In particular, choosing $\ell \gtrsim |\log H|$ recovers the convergence rates of the ideal method. More precisely,
\begin{equation}
\label{eq:lod_l2_main}
\|u(x,\omega)-u_{\mathrm{LOD}}(x,\omega)\|_{L^2(D)}
\lesssim
H^2,
\end{equation}
and
\begin{equation}
\label{eq:lod_energy_main}
\begin{aligned}
&\|A^{1/2}\nabla\bigl(u(x,\omega)-u_{\mathrm{LOD}}(x,\omega)\bigr)\|_{L^2(D)} \\
&\quad\le
\frac{C_I}{\sqrt{\alpha}}
{\left(
H + C_{II}\bigl(\ell^{d/2}+1\bigr)
\left(\frac{\beta}{\alpha}\right)^{3/2}
\exp\!\left(-c\,\frac{\alpha}{\beta}\,\ell\right)
\right)}^2
\|f\|_{L^2(D)}.
\end{aligned}
\end{equation}
\end{Theorem}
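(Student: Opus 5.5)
The plan follows the standard three-stage LOD argument of \citep{lod_book}: ideal-method estimate, localization estimate, and a Galerkin (C\'ea) combination. I will try to extract a second power of the bracket by exploiting the $L^2$-orthogonality of the fine-scale space. I take $\mathcal{I}_H$ to be the $L^2$-projection onto $V_H$, so that $W=\ker(\mathcal{I}_H)$ is $L^2$-orthogonal to $V_H$. I write $\|v\|_a:=\|A^{1/2}\nabla v\|_{L^2(D)}$, with $A=\kappa(\cdot,\omega)$, and I work for each fixed $\omega$. All constants depend only on shape regularity, which makes the estimate uniform in $\omega$ under the bounds $\alpha\le\kappa\le\beta$.

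\emph{Step 1 (ideal method).} Let $u^{\mathrm{ms}}_H$ be the ideal solution and set $e:=u-u_H^{\mathrm{ms}}$. Since $V=V_H^{\mathrm{ms}}\oplus W$ is $a$-orthogonal, Galerkin orthogonality gives $e\in W$ and $\|e\|_a^2=(f,e)$. Because $e\in W$ is $L^2$-orthogonal to $V_H$, I can write $(f,e)=(f-\mathcal{I}_Hf,\,e-\mathcal{I}_He)$. The local approximation and stability property of $\mathcal{I}_H$ gives $\|e-\mathcal{I}_He\|_{L^2}\le C H\|\nabla e\|_{L^2}\le CH\alpha^{-1/2}\|e\|_a$. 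This yields $\|e\|_a\le C H\alpha^{-1/2}\|f-\mathcal{I}_Hf\|_{L^2}$. Bounding $\|f-\mathcal{I}_Hf\|_{L^2}\le\|f\|_{L^2}$ gives the first power of $H$.

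\emph{Step 2 (localization).} Use the exponential decay of $Q$ (Chapter 4 of \citep{lod_book}), proved by the cutoff-function argument on the patches $\mathcal{N}^\ell(T)$. The result is $\|(Q-Q^\ell_h)v_H\|_a\le C_{II}(\ell^{d/2}+1)(\beta/\alpha)^{3/2}\exp(-c\frac{\alpha}{\beta}\ell)\|v_H\|_a$. The factor $\ell^{d/2}$ comes from the finite-overlap count of the patches when the element contributions are summed.

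\emph{Step 3 (combination).} By C\'ea's lemma in $V_{H,\ell}^{\mathrm{ms},h}$, test with $(1-Q_h^\ell)\mathcal{I}_Hu^{\mathrm{ms}}_H$. Then use $\|u^{\mathrm{ms}}_H\|_a\lesssim\alpha^{-1/2}\|f\|_{L^2}$ (Friedrichs) together with Step 2. The fine-scale discretization error is neglected, as is implicit in the statement. This gives the bracket $H+C_{II}(\dots)$ to the first power, multiplied by $C_I\alpha^{-1/2}\|f\|$.

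\emph{Step 4 (the square, and the $L^2$ bound).} To obtain the squared bracket, I will rerun the identity of Step 1 with the fully discrete error. Write $e_\ell:=u-u_{\mathrm{LOD}}$ and split $e_\ell=w+r$, where $w\in W$ and $r\in V_H^{\mathrm{ms}}$ has energy controlled by the localization term. Then
\begin{equation*}
\|e_\ell\|_a^2=a(e_\ell,e_\ell)=(f,w)+a(e_\ell,r).
\end{equation*}
The first summand carries one factor $H$ from the $L^2$-orthogonality argument and one factor $\|e_\ell\|_a$. The second summand carries one localization factor times $\|e_\ell\|_a$. A second factor of the bracket would then have to come from $\|f-\mathcal{I}_Hf\|_{L^2}$ and from $\|r\|_a$ measured against the already-bounded error.

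\textbf{Main obstacle.} Step 4 is where I expect the argument to fail. For $f\in L^2$ only, $\|f-\mathcal{I}_Hf\|_{L^2}$ is merely $O(\|f\|_{L^2})$. So the second factor of $H$ in the energy norm genuinely needs extra regularity, for instance $\|f-\mathcal{I}_Hf\|\lesssim H\|f\|_{H^1}$. I would first try to close the estimate under that extra assumption, keeping the right-hand side in the stated form with $\|f\|_{L^2}$ replaced accordingly. Without it, the bound that the argument actually delivers has the bracket to the first power. The $L^2$ estimate \eqref{eq:lod_l2_main} follows by an Aubin--Nitsche duality argument. Solve the dual problem with right-hand side $e_\ell$, apply Galerkin orthogonality, and apply Step 3 to the dual solution. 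This gives the square of the first-power energy bracket, which is $\lesssim H^2$ once $\ell\gtrsim|\log H|$ makes the exponential term $\lesssim H$.
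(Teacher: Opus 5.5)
Your Steps 1--3 and the closing duality argument are the standard LOD proof behind the cited result. The paper gives no proof of its own, only the reference. The ideal estimate via $(f,e)=(f-\mathcal{I}_Hf,e)$, the exponential decay of the correctors, and C\'ea's lemma with $(1-Q_h^\ell)\mathcal{I}_Hu_H^{\mathrm{ms}}=(1-Q_h^\ell)\mathcal{I}_Hu$ give the energy bound with the bracket $\mathcal{B}:=H+C_{II}(\ell^{d/2}+1)(\beta/\alpha)^{3/2}\exp(-c\frac{\alpha}{\beta}\ell)$ to the \emph{first} power. Aubin--Nitsche then gives $\|u-u_{\mathrm{LOD}}\|_{L^2(D)}\le C_I^2\alpha^{-1}\mathcal{B}^2\|f\|_{L^2(D)}\lesssim H^2$ once $\ell\gtrsim|\log H|$, with the implied constant depending on $\beta/\alpha$.

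The obstruction you hit in Step~4 is not a gap in your argument but an error in the statement. The energy bound \eqref{eq:lod_energy_main} with $\mathcal{B}^2$ is false for general $f\in L^2(D)$, and your own Step~1 shows it. Take $\kappa\equiv 1$ and choose $\ell$ so large that every patch is all of $D$; then $Q_h^\ell$ is the ideal corrector and $\mathcal{B}\le 2H$ for small $H$. Neglect the fine discretization, as you do. Let $f\in W$ be a rescaled $H^1_0$ function on one coarse element that is $L^2$-orthogonal to $P_1$ there, normalized so that $\|f\|_{L^2(D)}=1$ and $\|\nabla f\|_{L^2(D)}\lesssim H^{-1}$. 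Since the error $e$ is the $a$-Riesz representer of $(f,\cdot)$ on $W$,
\begin{equation*}
\|e\|_a=\sup_{w\in W}\frac{(f,w)}{\|w\|_a}\ge\frac{\|f\|_{L^2(D)}^2}{\|\nabla f\|_{L^2(D)}}\gtrsim H,
\end{equation*}
whereas \eqref{eq:lod_energy_main} would force $\|e\|_a\le 4C_IH^2$. The squared bracket belongs to the $L^2$ estimate, with prefactor of order $\alpha^{-1}$, which is exactly what your duality step produces. So your proposal proves the correct form of the theorem, and only the $L^2$ bound is used later in the paper.

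Drop Step~4 rather than rescuing it under $f\in H^1$. That assumption changes the hypotheses and only improves the ideal part to $H\|f-\mathcal{I}_Hf\|_{L^2(D)}\lesssim H^2\|f\|_{H^1(D)}$. The localization contribution does not depend on $f$ and still enters linearly. The identity in Step~4 also drops the term $-a(u_{\mathrm{LOD}},w)=-a\bigl((Q-Q_h^\ell)\mathcal{I}_Hu_{\mathrm{LOD}},w\bigr)$, which is nonzero because $u_{\mathrm{LOD}}$ lies only in the localized space.

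There are smaller points in the part that does work:
\begin{itemize}
\item With $\mathcal{I}_H$ the global $L^2$-projection, the cutoff argument of Chapter~4 does not apply verbatim, since it uses locality of $\mathcal{I}_H$. It goes through once you note that $W=\ker\mathcal{I}_H$ is also the kernel of the local weighted Cl\'ement operator $v\mapsto\sum_i (v,\Phi_H^i)(1,\Phi_H^i)^{-1}\Phi_H^i$, as in \citep{lodoriginal}.
\item In Step~3 you need the energy stability $\|\mathcal{I}_Hv\|_a\lesssim\sqrt{\beta/\alpha}\,\|v\|_a$ to pass from $\|\mathcal{I}_Hu_H^{\mathrm{ms}}\|_a$ to $\|u_H^{\mathrm{ms}}\|_a$. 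This is part of where the contrast power comes from.
\item The statement compares with the exact $u$ rather than the fine-scale solution $u_h$. The $L^2$ bound therefore also needs $\|u-u_h\|_{L^2(D)}\lesssim H^2$, an assumption on $h$ that the statement leaves implicit.
\end{itemize}
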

The proof of the above result can be found in \citep{lod_book}. In particular, term~(I) in~\eqref{eq:error_decomposition} converges like $H^2$ in the $L^2(D)$ norm when $\ell\gtrsim |\log H|$. In the following, we now develop bounds for terms~(II) and~(III) in Theorems~\ref{thm:energy_approx} and~\ref{thm:energy_gen}, respectively. Their proofs are given in the appendix. We first start with a bound for~(II).

\begin{Theorem}[Approximation error for the energy-based loss]
\label{thm:energy_approx}

Denote by $\lambda_{\min}(\omega)$ and $\lambda_{\max}(\omega)$ the smallest and biggest eigenvalues of $\boldsymbol{A}_{\mathrm{LOD}}$ respectively. Assume that there exists a positive constant $c_0 > 0$ such that
\begin{equation*}
\operatorname*{ess\,inf}_{\omega\in\Omega}\lambda_{\min}(\omega)\ge c_0>0.
\end{equation*}
Denote by $\boldsymbol{u}_{\mathrm{LOD}} \colon \Omega \longrightarrow \mathbb{R}^{N_H}$ the true coefficient map of the LOD approximation, i.e.\ the solution of the linear system~\eqref{eq:lod_linear_eq} for a given $\omega \in \Omega$
%, and use the population energy loss $\mathcal{L}_{E}$ from~\eqref{eq:energy_loss_population}.
Let $\mathcal N$ be any neural network class, and define
\[
{\boldsymbol{u}}_{\mathrm{LOD}}^E
\in
\operatorname*{argmin}_{\boldsymbol{u}\in\mathcal N}\mathcal L_E(\boldsymbol{u}).
\]
Then
\[
\|{\boldsymbol{u}}_{\mathrm{LOD}}-{\boldsymbol{u}}_{\mathrm{LOD}}^E\|_{L^1(\Omega)}
\le
c_0^{-1/2}
\inf_{\boldsymbol{u}\in\mathcal N}
\|A_{\mathrm{LOD}}^{1/2}(\boldsymbol{u}-\boldsymbol{u}_{\mathrm{LOD}})\|_{L^1(\Omega)}.
\]
If, in addition,
\[
\|\sqrt{\lambda_{\max}}\|_{L^\infty(\Omega)} := c_1 <\infty,
\]
for some $c_1>0$, then
\[
\|{\boldsymbol{u}}_{\mathrm{LOD}}-{\boldsymbol{u}}_{\mathrm{LOD}}^E\|_{L^1(\Omega)}
\le
\frac{c_1}{\sqrt{c_0}}
\inf_{\boldsymbol{u}\in\mathcal N}
\|(\boldsymbol{u}-\boldsymbol{u}_{\mathrm{LOD}})\|_{L^1(\Omega)}
\]
\end{Theorem}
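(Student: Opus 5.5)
The plan is a pointwise eigenvalue comparison between the Euclidean norm and the energy norm $\|\cdot\|_{\boldsymbol{A}_{\mathrm{LOD}}(\omega)}$, followed by integration over $\Omega$ and the minimizing property of $\boldsymbol{u}_{\mathrm{LOD}}^E$.

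Since $\boldsymbol{A}_{\mathrm{LOD}}(\omega)$ is symmetric positive definite (it is the Gram matrix of the linearly independent corrected basis $\{\Psi_H^j(\omega)\}$ in $a_\omega$), it has a symmetric square root. For every $\boldsymbol{v}\in\mathbb{R}^{N_H}$ and a.e.\ $\omega$,
\[
\lambda_{\min}(\omega)\,|\boldsymbol{v}|^2 \le \boldsymbol{v}^\top \boldsymbol{A}_{\mathrm{LOD}}(\omega)\boldsymbol{v} = |\boldsymbol{A}_{\mathrm{LOD}}^{1/2}(\omega)\boldsymbol{v}|^2 \le \lambda_{\max}(\omega)\,|\boldsymbol{v}|^2 .
\]
Using the assumption $\lambda_{\min}\ge c_0$ a.e., this gives
\[
|\boldsymbol{v}|\le c_0^{-1/2}\,|\boldsymbol{A}_{\mathrm{LOD}}^{1/2}(\omega)\boldsymbol{v}|,
\qquad
|\boldsymbol{A}_{\mathrm{LOD}}^{1/2}(\omega)\boldsymbol{v}|\le \sqrt{\lambda_{\max}(\omega)}\,|\boldsymbol{v}|.
\]

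Next I apply the first inequality with $\boldsymbol{v}=\boldsymbol{u}_{\mathrm{LOD}}(\omega)-\boldsymbol{u}_{\mathrm{LOD}}^E(\omega)$ and integrate over $\Omega$ with respect to $\mathbb{P}$. This yields
\[
\|\boldsymbol{u}_{\mathrm{LOD}}-\boldsymbol{u}_{\mathrm{LOD}}^E\|_{L^1(\Omega)} \le c_0^{-1/2}\,\mathcal{L}_E(\boldsymbol{u}_{\mathrm{LOD}}^E).
\]
Because $\boldsymbol{u}_{\mathrm{LOD}}^E$ minimizes $\mathcal{L}_E$ over $\mathcal N$, we have $\mathcal{L}_E(\boldsymbol{u}_{\mathrm{LOD}}^E)=\inf_{\boldsymbol{u}\in\mathcal N}\mathcal{L}_E(\boldsymbol{u})$, which is the first claim. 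If the argmin were not attained, the same argument with an $\varepsilon$-minimizer and $\varepsilon\to0$ would give the bound, but the statement assumes a minimizer exists.

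For the second claim, I bound the infimum from above. For each fixed $\boldsymbol{u}\in\mathcal N$, the second pointwise inequality with $\sqrt{\lambda_{\max}}\le c_1$ a.e.\ gives
\[
\mathcal{L}_E(\boldsymbol{u})\le c_1\,\|\boldsymbol{u}-\boldsymbol{u}_{\mathrm{LOD}}\|_{L^1(\Omega)}.
\]
Taking the infimum over $\mathcal N$ and combining with the first bound yields the factor $c_1/\sqrt{c_0}$.

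There is no real obstacle; this is linear algebra plus monotonicity of the integral. The only points needing care are technical. The maps $\omega\mapsto|\boldsymbol{A}_{\mathrm{LOD}}^{1/2}(\omega)\boldsymbol{v}(\omega)|$ must be measurable so that the $L^1(\Omega)$ norms make sense; this follows from continuity of the matrix square root and of the eigenvalues in the matrix entries, assuming $\omega\mapsto\boldsymbol{A}_{\mathrm{LOD}}(\omega)$ is measurable. The pointwise inequalities also need to hold only $\mathbb{P}$-a.e., which is exactly what the ess inf/ess sup hypotheses supply.
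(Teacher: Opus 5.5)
Your proposal is correct and follows essentially the same route as the paper. Both arguments use the pointwise spectral equivalence $\sqrt{\lambda_{\min}(\omega)}\,|\boldsymbol{v}|\le|\boldsymbol{A}_{\mathrm{LOD}}^{1/2}(\omega)\boldsymbol{v}|\le\sqrt{\lambda_{\max}(\omega)}\,|\boldsymbol{v}|$, integrate it over $\Omega$ together with the minimizing property of $\boldsymbol{u}_{\mathrm{LOD}}^E$ for the first bound, and for the second bound apply the upper estimate to each $\boldsymbol{u}\in\mathcal N$ before taking the infimum; your remarks on measurability and on $\varepsilon$-minimizers are sound points that the paper leaves implicit.
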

Next, we shall give a bound for term~(III).
\begin{Theorem}[Generalization error for the energy-based loss]
\label{thm:energy_gen}
Assume that
\[
\operatorname*{ess\,inf}_{\omega\in\Omega}\lambda_{\min}(\omega)\ge c_0>0,
\qquad
\operatorname*{ess\,sup}_{\omega\in\Omega}\lambda_{\max}(\omega)\leq c_1 <\infty.
\]
Let $\omega_1,\dots,\omega_M$ be i.i.d.\ samples in $\Omega$.
Define the empirical energy loss as the Monte-Carlo approximation of the energy loss
\[
\mathcal L_{E,M}(\boldsymbol{u})
:=
\frac{|\Omega|}{M}\sum_{i=1}^M
\left|
\boldsymbol{A}_{\mathrm{LOD}}(\omega_i)^{1/2}
\bigl({\boldsymbol{u}}_{\mathrm{LOD}} (\omega_i)-\boldsymbol{u}(\omega_i)\bigr)
\right|.
\]
and let
\[
\widehat{\boldsymbol{u}}^{E,M}_{\mathrm{LOD}}
\in
\operatorname*{argmin}_{\boldsymbol{u}\in\mathcal N}\mathcal L_{E,M}(\boldsymbol{u})
\]
be a minimizer of the empirical energy loss function. Define the function class associated with the Neural Network class $\mathcal{N}$ and the energy loss
\[
\mathcal G^{E}
:=
\Bigl\{
\omega\mapsto
\left|
A^{1/2}_{\mathrm{LOD}}(\omega)
\bigl({\boldsymbol{u}}_{\mathrm{LOD}} (\omega)-\boldsymbol{u}(\omega)\bigr)
\right|
:\ \boldsymbol{u} \in \mathcal N
\Bigr\}.
\]
Then
\begin{equation*}
\begin{aligned}
&\mathbb E\!\left[
\left\|
\widehat{\boldsymbol{u}}^{E}_{\mathrm{LOD}}
-\widehat{\boldsymbol{u}}^{E,M}_{\mathrm{LOD}}
\right\|_{L^1(\Omega)}
\right] \\
&\quad\lesssim
\left\|
\frac{1}{\sqrt{\lambda_{\min}}}
\right\|_{L^\infty(\Omega)}
R_M(\mathcal G^{E}) \\
&\qquad+
\left\|\frac{1}{\sqrt{\lambda_{\min}}}\right\|_{L^\infty(\Omega)}
\|\sqrt{\lambda_{\max}}\|_{L^\infty(\Omega)}
\inf_{\boldsymbol{u}\in\mathcal N}
\|(\boldsymbol{u}-\boldsymbol{u}_{\mathrm{LOD}})\|_{L^1(\Omega)}.
\end{aligned}
\end{equation*}
Here, $R_M(\mathcal G^{E})$ denotes the empirical Rademacher complexity of the function class $\mathcal{G}^{E}$, and the expectation is taken with respect to the random training samples
$\omega_1,\dots,\omega_M$, and $R_M(\mathcal G_n^{E})$ .
\end{Theorem}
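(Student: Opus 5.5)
The plan is to move everything into the weighted energy norm, where the empirical and population losses can be compared directly. Afterwards we return to the plain $L^1(\Omega)$ norm via the spectral bounds. Write $\boldsymbol{e}(\omega):=\widehat{\boldsymbol{u}}^{E}_{\mathrm{LOD}}(\omega)-\widehat{\boldsymbol{u}}^{E,M}_{\mathrm{LOD}}(\omega)$.

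\textbf{Step 1: reduce to the energy norm.} For every $\omega$ we have $|\boldsymbol{e}(\omega)|\le \lambda_{\min}(\omega)^{-1/2}|\boldsymbol{A}_{\mathrm{LOD}}^{1/2}(\omega)\boldsymbol{e}(\omega)|$. This gives
\[
\|\boldsymbol{e}\|_{L^1(\Omega)}\le \Bigl\|\tfrac{1}{\sqrt{\lambda_{\min}}}\Bigr\|_{L^\infty(\Omega)}\bigl\|\boldsymbol{A}_{\mathrm{LOD}}^{1/2}\boldsymbol{e}\bigr\|_{L^1(\Omega)}.
\]
Next insert $\boldsymbol{u}_{\mathrm{LOD}}$ and apply the triangle inequality:
\[
\bigl\|\boldsymbol{A}_{\mathrm{LOD}}^{1/2}\boldsymbol{e}\bigr\|_{L^1(\Omega)}\le \mathcal L_E(\widehat{\boldsymbol{u}}^{E}_{\mathrm{LOD}})+\mathcal L_E(\widehat{\boldsymbol{u}}^{E,M}_{\mathrm{LOD}}).
\]
The first term equals $\inf_{\mathcal N}\mathcal L_E$. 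By the argument of Theorem~\ref{thm:energy_approx}, using $|\boldsymbol{A}^{1/2}\boldsymbol{v}|\le\sqrt{\lambda_{\max}}|\boldsymbol{v}|$, it is at most $\|\sqrt{\lambda_{\max}}\|_{L^\infty}\inf_{\mathcal N}\|\boldsymbol{u}-\boldsymbol{u}_{\mathrm{LOD}}\|_{L^1(\Omega)}$.

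\textbf{Step 2: the standard ERM decomposition for the second term.} Let $\boldsymbol{u}^\ast\in\mathcal N$ be a (near-)minimizer of $\mathcal L_E$. We split
\[
\mathcal L_E(\widehat{\boldsymbol{u}}^{E,M})
=\bigl[\mathcal L_E(\widehat{\boldsymbol{u}}^{E,M})-\mathcal L_{E,M}(\widehat{\boldsymbol{u}}^{E,M})\bigr]
+\bigl[\mathcal L_{E,M}(\widehat{\boldsymbol{u}}^{E,M})-\mathcal L_{E,M}(\boldsymbol{u}^\ast)\bigr]
+\bigl[\mathcal L_{E,M}(\boldsymbol{u}^\ast)-\mathcal L_E(\boldsymbol{u}^\ast)\bigr]
+\mathcal L_E(\boldsymbol{u}^\ast).
\]
The middle bracket is $\le 0$ by optimality of $\widehat{\boldsymbol{u}}^{E,M}$. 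The third bracket has zero expectation, since $\boldsymbol{u}^\ast$ is deterministic and the $|\Omega|/M$ normalization makes $\mathcal L_{E,M}$ an unbiased Monte Carlo estimator of the $L^1(\Omega)$ integral. The last term is again bounded as in Step 1. The first bracket is at most $\sup_{g\in\mathcal G^E}\bigl(\int_\Omega g-\tfrac{|\Omega|}{M}\sum_i g(\omega_i)\bigr)$.

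\textbf{Step 3: symmetrization.} By the classical ghost-sample and Rademacher sign argument,
\[
\mathbb E\sup_{g\in\mathcal G^E}\Bigl(\int_\Omega g-\tfrac{|\Omega|}{M}\textstyle\sum_i g(\omega_i)\Bigr)\le 2|\Omega|\,R_M(\mathcal G^E).
\]
Here compactness of $\Omega$ and the bound $\lambda_{\max}\le c_1$ keep all quantities finite and measurable, and $|\Omega|$ is absorbed into $\lesssim$. Combining Steps 1--3 and taking expectations yields exactly the claimed bound, with the approximation terms collected into a constant multiple of $\|\lambda_{\min}^{-1/2}\|_{L^\infty}\|\sqrt{\lambda_{\max}}\|_{L^\infty}\inf_{\mathcal N}\|\boldsymbol{u}-\boldsymbol{u}_{\mathrm{LOD}}\|_{L^1}$.

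\textbf{Main obstacle.} The delicate point is Step 3: the symmetrization must be justified for a possibly uncountable class $\mathcal G^E$. I would handle this through a measurable version of the supremum, or by restricting to a countable dense subclass of network parameters using continuity in the parameters. A second subtlety is that the data-dependent $\widehat{\boldsymbol{u}}^{E,M}$ may not be attained. In that case I would replace it by an $\varepsilon$-minimizer and let $\varepsilon\to0$. The precise form of $R_M$ (expected versus empirical) only affects constants once the expectation is taken.
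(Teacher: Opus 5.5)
Your proposal is correct and follows essentially the same route as the paper. Both arguments pass to the energy norm via the lower spectral bound, split through $\boldsymbol{u}_{\mathrm{LOD}}$ by the triangle inequality, control $\mathcal{L}_E(\widehat{\boldsymbol{u}}^{E,M}_{\mathrm{LOD}})$ by an ERM decomposition whose deviation term is bounded by $R_M(\mathcal{G}^{E})$, and finish with the upper spectral bound on $\inf_{\mathcal{N}}\mathcal{L}_E$.

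The differences are cosmetic. You bound $\mathcal{L}_E(\widehat{\boldsymbol{u}}^{E}_{\mathrm{LOD}})$ directly by $\inf_{\mathcal{N}}\mathcal{L}_E$, where the paper uses $\mathcal{L}_E(\widehat{\boldsymbol{u}}^{E}_{\mathrm{LOD}})\le\mathcal{L}_E(\widehat{\boldsymbol{u}}^{E,M}_{\mathrm{LOD}})$. You use a one-sided deviation plus unbiasedness at the deterministic comparator, where the paper uses the two-sided bound $2\sup_{\mathcal{N}}|\mathcal{L}_E-\mathcal{L}_{E,M}|$. You also spell out the symmetrization step that the paper only asserts.
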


If the neural network class $\mathcal N$ is chosen appropriately through the architecture, such that the associated Rademacher complexity satisfies $R_M(\mathcal G^{E})\to0$ as $M\to\infty$, and if $\mathcal N$ is sufficiently expressive to approximate the LOD coefficient map, then the learned coefficient surrogate converges to the ground-truth LOD coefficients $\boldsymbol{u}_{\mathrm{LOD}}$. Since the final LOD-MSNO(Coeff) prediction is obtained by reconstructing the physical solution from these coefficients and the multiscale basis functions, it remains to transfer this coefficient convergence to convergence of the reconstructed solution. The following theorem establishes this result and is our main theoretical result. It provides a precise error bound between the exact solution of equation~\eqref{darcy_eq} and the LOD-MSNO (Coeff) coefficient surrogate, combining the LOD discretization error with the approximation and generalization errors of the neural coefficient model.

\begin{Theorem}[Error estimate for LOD-MSNO(Coeff)]
\label{thm:combined_error}

Let \(\lambda_{\min}(\omega)\) and \(\lambda_{\max}(\omega)\) denote the smallest and largest eigenvalues of the LOD stiffness matrix $\boldsymbol{A}_{\mathrm{LOD}}(\omega)$ for a given input representation $\omega$, and assume that there exist constants $c_0,c_1 > 0$ such that
\begin{equation*}
\begin{aligned}
\operatorname*{ess\,inf}_{\omega\in\Omega}\lambda_{\min}(\omega)&\ge c_0>0,\\
\|\sqrt{\lambda_{\max}}\|_{L^\infty(\Omega)}&\le c_1<\infty,
\end{aligned}
\end{equation*}
and patch size chosen as \(\ell \approx |\log H|\). Let \(R_M(\mathcal G^{E})\) denote the empirical Rademacher complexity of the function class \(\mathcal G^{E}\) defined by
\[
\mathcal G^{E}
:=
\Bigl\{
\omega\mapsto
\left|
\boldsymbol{A}_{\mathrm{LOD}}^{1/2}(\omega)
\bigl(\boldsymbol{u}_{\mathrm{LOD}}(\omega)-\boldsymbol{u}(\omega)\bigr)
\right|
:\ \boldsymbol{u} \in \mathcal N
\Bigr\}.
\]
Then
\begin{equation}
\begin{aligned}
& \mathbb E\!\left[
\|u-\widehat u_{\mathrm{LOD}}\|_{L^1(\Omega;L^2(D))}
\right]
\lesssim{}
H^2 +\frac{1}{\sqrt{c_0}}\,R_M(\mathcal G^{E}) \\
&+\frac{2 c_1}{\sqrt{c_0}}\,
%\left\|\frac{1}{\sqrt{\lambda_{\min}}}\right\|_{L^\infty(\Omega)}
%\|\sqrt{\lambda_{\max}}\|_{L^\infty(\Omega)}
\inf_{\boldsymbol{u}\in\mathcal N}
\|(\boldsymbol{u}-\boldsymbol{u}_{\mathrm{LOD}})\|_{L^1(\Omega)}.
\end{aligned}
\end{equation}
Here, the expectation $\mathbb{E}$ is taken with respect to the random training samples
$\omega_1,\dots,\omega_M$.
\end{Theorem}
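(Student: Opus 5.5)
The plan is to use the three-term decomposition \eqref{eq:error_decomposition}, taking the $L^2(D)$ norm pointwise in $\omega$, then the $L^1(\Omega)$ norm, then the expectation over the training samples. The triangle inequality gives
\[
\mathbb E\bigl[\|u-\widehat u^{E,M}_{\mathrm{LOD}}\|_{L^1(\Omega;L^2(D))}\bigr]
\le \|u-u_{\mathrm{LOD}}\|_{L^1(\Omega;L^2(D))}
+\|u_{\mathrm{LOD}}-\widehat u^{E}_{\mathrm{LOD}}\|_{L^1(\Omega;L^2(D))}
+\mathbb E\bigl[\|\widehat u^{E}_{\mathrm{LOD}}-\widehat u^{E,M}_{\mathrm{LOD}}\|_{L^1(\Omega;L^2(D))}\bigr].
\]
Term (I) follows from Theorem~\ref{thm:lod_approximation_error}. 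With $\ell\approx|\log H|$ the bound \eqref{eq:lod_l2_main} holds for a.e.\ $\omega$, with a constant depending only on $\alpha,\beta,\|f\|$ and hence uniform in $\omega$. Integrating over the compact $\Omega$ gives a bound $\lesssim H^2$, where $|\Omega|$ is absorbed into $\lesssim$.

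For (II) and (III), the key step is a transfer lemma from coefficient vectors to reconstructed functions. For $\boldsymbol v\in\mathbb R^{N_H}$ and $v=\sum_j \boldsymbol v^{(j)}\Psi_H^j(\omega)$ we have, by definition of $\boldsymbol A_{\mathrm{LOD}}$,
\[
a_\omega(v,v)=\boldsymbol v^\top\boldsymbol A_{\mathrm{LOD}}(\omega)\boldsymbol v=|\boldsymbol A_{\mathrm{LOD}}^{1/2}\boldsymbol v|^2\le \lambda_{\max}(\omega)|\boldsymbol v|^2 .
\]
Since $v\in H^1_0(D)$, Friedrichs' inequality and ellipticity give
\[
\|v\|_{L^2(D)}\le C_F\|\nabla v\|_{L^2}\le C_F\alpha^{-1/2}a_\omega(v,v)^{1/2}=C_F\alpha^{-1/2}|\boldsymbol A_{\mathrm{LOD}}^{1/2}\boldsymbol v|.
\]
Hence the $L^2(D)$ error of the reconstruction is bounded by $\lesssim|\boldsymbol A^{1/2}_{\mathrm{LOD}}\boldsymbol v|\le c_1|\boldsymbol v|$, with a constant uniform in $\omega$. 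Integrating in $\omega$ reduces the second and third terms to $c_1$ times the $L^1(\Omega)$ coefficient errors bounded in Theorems~\ref{thm:energy_approx} and~\ref{thm:energy_gen}.

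Applying Theorem~\ref{thm:energy_approx} (second statement) to (II) gives $\frac{c_1}{\sqrt{c_0}}\inf_{\mathcal N}\|\boldsymbol u-\boldsymbol u_{\mathrm{LOD}}\|_{L^1(\Omega)}$. Applying Theorem~\ref{thm:energy_gen} to (III), with $\|\lambda_{\min}^{-1/2}\|_{L^\infty}\le c_0^{-1/2}$ and $\|\sqrt{\lambda_{\max}}\|_{L^\infty}\le c_1$, gives $c_0^{-1/2}R_M(\mathcal G^E)+\frac{c_1}{\sqrt{c_0}}\inf_{\mathcal N}\|\cdot\|$. Adding the two infimum contributions produces the factor $2c_1/\sqrt{c_0}$.

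The main obstacle is the bookkeeping of the $c_1$ factor from the transfer step. Done naively, it would multiply (II) and (III) by an extra $c_1$ and by the $\alpha$-dependent constant $C_F\alpha^{-1/2}$. The remedy is to work with the energy quantity $|\boldsymbol A^{1/2}_{\mathrm{LOD}}(\cdot)|$ directly rather than with plain coefficient errors. For (II), the minimizer property gives
\[
\|\boldsymbol A^{1/2}_{\mathrm{LOD}}(\boldsymbol u^E_{\mathrm{LOD}}-\boldsymbol u_{\mathrm{LOD}})\|_{L^1}=\inf_{\mathcal N}\mathcal L_E\le c_1\inf_{\mathcal N}\|\boldsymbol u-\boldsymbol u_{\mathrm{LOD}}\|_{L^1},
\]
and the analogous argument inside the proof of Theorem~\ref{thm:energy_gen} handles (III). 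Any residual $c_0^{-1/2}$ or $\alpha$-dependent factors are absorbed into $\lesssim$, with $\alpha,\beta,f,D$ treated as fixed. If this sharper route fails, I would accept the slightly weaker constant $c_1^2/\sqrt{c_0}$ hidden in $\lesssim$.
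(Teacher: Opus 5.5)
Your argument is correct, but it differs from the paper's in the step that matters: the transfer from coefficient errors to $L^2(D)$ errors of the reconstructed functions.

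The paper bounds $\bigl\|\sum_j c_j\Psi_H^j\bigr\|_{L^2(D)}\le\max_j\|\Psi_H^j\|_{L^2(D)}\sum_j|c_j|$. It then hides two factors in $\lesssim$: $\max_j\|\Psi_H^j\|_{L^2(D)}$, which it pulls outside the $\omega$-integral and so tacitly treats as uniform in $\omega$, and the $\ell^1$-to-Euclidean factor $\sqrt{N_H}$. Finally it applies Theorems~\ref{thm:energy_approx} and~\ref{thm:energy_gen} as black boxes to $\|\boldsymbol u_{\mathrm{LOD}}-\widehat{\boldsymbol u}^{E}_{\mathrm{LOD}}\|_{L^1(\Omega)}$ and $\|\widehat{\boldsymbol u}^{E}_{\mathrm{LOD}}-\widehat{\boldsymbol u}^{E,M}_{\mathrm{LOD}}\|_{L^1(\Omega)}$. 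The factor $c_0^{-1/2}$ enters there, when energy errors are converted back into Euclidean ones. Because the function is controlled by the Euclidean coefficient error, no extra $c_1$ appears.

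Your estimate $\|v\|_{L^2(D)}\le C_F\alpha^{-1/2}|\boldsymbol A_{\mathrm{LOD}}^{1/2}\boldsymbol v|$, from Friedrichs' inequality and ellipticity, has a constant depending only on $D$ and $\alpha$. It involves no $N_H$ and no basis norms, and it is genuinely uniform in $\omega$. Because the loss is exactly this energy quantity, the minimizer property and the Rademacher step from the proof of Theorem~\ref{thm:energy_gen} apply directly. You obtain $H^2+R_M(\mathcal G^E)+c_1\inf_{\boldsymbol u\in\mathcal N}\|\boldsymbol u-\boldsymbol u_{\mathrm{LOD}}\|_{L^1(\Omega)}$ without ever using the lower spectral bound. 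The paper's route is modular, reusing Theorems~\ref{thm:energy_approx} and~\ref{thm:energy_gen} verbatim. Yours gives a cleaner bound with honest constants and does not need $c_0$ at all.

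Two corrections to your write-up. First, your sharp bound contains no $c_0^{-1/2}$, so matching the stated form means inserting that factor, not ``absorbing'' it, and this requires $c_0\lesssim 1$. That holds: $c_0\le\lambda_{\min}(\omega)\le(\boldsymbol A_{\mathrm{LOD}}(\omega))_{jj}=a_\omega(\Psi_H^j,\Psi_H^j)\lesssim\beta\|\nabla\Phi_H^j\|_{L^2(D)}^2\lesssim\beta$, using stability of the local correctors together with $d\in\{2,3\}$ and $H\le 1$. Second, drop your fallback with $c_1^2/\sqrt{c_0}$. Since $c_1$ appears explicitly in the statement, hiding an extra factor $c_1$ in $\lesssim$ would not prove the theorem as written.
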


\begin{proof}
We decompose the total error as

\begin{equation*}
\begin{aligned}
u-\widehat u_{\mathrm{LOD}}^{E,M}
={}&
\underbrace{u-u_{\mathrm{LOD}}}_{\text{(I)}}
+
\underbrace{u_{\mathrm{LOD}}-\widehat u_{\mathrm{LOD}}^{E}}_{\text{(II)}}
+
\underbrace{\widehat u_{\mathrm{LOD}}^{E}-\widehat u_{\mathrm{LOD}}^{E,M}}_{\text{(III)}}.
\end{aligned}
\end{equation*}
By Theorem~\ref{thm:lod_approximation_error} we have that $\|u(\cdot,\omega)-u_{\mathrm{LOD}}(\cdot,\omega)\|_{L^2(D)} \lesssim H^2$ for all $\omega \in \Omega$. Using the compactness of $\Omega$ we get:
\begin{equation*}
\|u-u_{\mathrm{LOD}}\|_{L^1(\Omega;L^2(D))}
=
\int_\Omega
\|u(\cdot,\omega)-u_{\mathrm{LOD}}(\cdot,\omega)\|_{L^2(D)}
\, d\mu(\omega)
\le
\int_\Omega C H^2 \, d\mu(\omega)
\lesssim
H^2
\end{equation*}
Next, from the definition, we have that
\begin{equation*}
\begin{aligned}
\|u_{\mathrm{LOD}}-{u}^{E,M}_{\mathrm{LOD}}\|_{L^1(\Omega;L^2(D))}
&=
\int_\Omega
\left\|
\sum_{j=1}^{N_H}
\bigl(\boldsymbol{u}^{(j)}_{\mathrm{LOD}}-(\boldsymbol{\widehat{u}}_{\mathrm{LOD}}^{E,M(j)})\bigr)\Psi^j_{H}
\right\|_{L^2(D)}
\,d\omega
\nonumber\\
&\lesssim
\int_\Omega
\left(
\sum_{j=1}^{N_H}
\bigl|\boldsymbol{u}_{\mathrm{LOD}}^{(j)}-(\boldsymbol{\widehat{u}}_{\mathrm{LOD}}^{E,M(j)})\bigr|
\|\Psi_H^{j}\|_{L^2(D)}
\right)
\,d\omega
\nonumber\\
&\lesssim
\max_{1\le j\le N_H}\|\Psi_H^{j}\|_{L^2(D)}
\int_\Omega
\left(
\sum_{j=1}^{N_H}
\bigl|\boldsymbol{u}_{\mathrm{LOD}}^{(j)}-(\boldsymbol{\widehat{u}}_{\mathrm{LOD}}^{E,M(j)})\bigr|
\right)
\,d\omega
\nonumber\\
&\lesssim
\max_{1\le j\le N_H}\|\Psi_{H}^{j}\|_{L^2(D)} \cdot
\sum_{j=1}^{N_H}
\|\boldsymbol{u}_{\mathrm{LOD}}^{(j)}
-\widehat{\boldsymbol{u}}_{\mathrm{LOD}}^{E,M(j)}\|_{L^1(\Omega)} \\
& \lesssim \|\boldsymbol{u}_{\mathrm{LOD}}
-\widehat{\boldsymbol{u}}_{\mathrm{LOD}}^{E,M}\|_{L^1(\Omega)}
\end{aligned}
\end{equation*} Theorem~\ref{thm:energy_approx} implies
\begin{equation*}
\|\boldsymbol u_{\mathrm{LOD}}-\widehat{\boldsymbol u}_{\mathrm{LOD}}^{E}\|_{L^1(\Omega)}
\le
\frac{c_1}{\sqrt{c_0}}\,
\inf_{\boldsymbol{u} \in \mathcal N}
\|(\boldsymbol{u}-\boldsymbol{u}_{\mathrm{LOD}})\|_{L^1(\Omega)},
\end{equation*}
while Theorem~\ref{thm:energy_gen} gives
\begin{equation*}
\mathbb E[
\|\widehat{\boldsymbol{u}}_{\mathrm{LOD}}^{E}-{\widehat {\boldsymbol{u}}^{E,M}_{\mathrm{LOD}}}\|_{L^1(\Omega)}]
\lesssim
\frac{1}{\sqrt{c_0}}\,R_M(\mathcal G^{E})
+\frac{c_1}{\sqrt{c_0}}\,
\inf_{\boldsymbol{u} \in \mathcal N}
\|(\boldsymbol{u}-\boldsymbol{u}_{\mathrm{LOD}})\|_{L^1(\Omega)}.
\end{equation*}
Combining the three terms proves
\begin{equation*}
\mathbb E\!\left[
\|u-\widehat u_{\mathrm{LOD}}^{E,M}\|_{L^1(\Omega;L^2(D))}
\right] \lesssim
H^2
+\frac{1}{\sqrt{c_0}}\,R_M(\mathcal G^{E})+
\frac{2c_1}{\sqrt{c_0}}\,
\inf_{\boldsymbol{u} \in \mathcal N}
\|(\boldsymbol{u}-\boldsymbol{u}_{\mathrm{LOD}})\|_{L^1(\Omega)}.
\end{equation*}

\end{proof}

Theorem~\ref{thm:combined_error} establishes that the total error between the true solution of~(\ref{darcy_eq}) is primarily governed by three contributions. The first term reflects the approximation properties of the LOD method and recovers the optimal bound when $\ell \approx |\log(H)|$. The second term captures the generalization capability of the neural network class, which can be quantified via its Rademacher complexity. The third term depends on both the expressive power of the neural network class $\mathcal{N}$ and the spectral properties of the LOD stiffness matrix corresponding to a given input $\omega \in \Omega$. By picking a neural network class $\mathcal{N}$ with strong enough approximation (third term) and generalization properties (second term), and by choosing $H$ small enough (first term), we can achieve that the expected error between the true solution $u$ and the LOD-MSNO (Coeff) prediction $\widehat{u}_{\mathrm{LOD}}^{E,M}$ converges to zero.

\subsection{LOD-MSNO (Joint): Learning the basis functions and the combined model}
\label{sec:basis_learning}

The coefficient model described so far assumes that the corrected LOD basis functions $\{\Psi_H^j(\omega)\}_{j=1}^{N_H}$ are available for every input realization. However, computing these basis functions by solving localized fine-scale corrector problems of the form (\ref{eq:discrete_localized_corrector}) for every patch constitutes the major computational bottleneck of the classical LOD method. In order to remove this remaining bottleneck, we aim to also learn the coefficient-dependent basis map
\begin{equation*}
\label{eq:basis_map_learning}
    \kappa(\cdot,\omega)
    \longmapsto
    \{\Psi_H^j(\cdot,\omega)\}_{j=1}^{N_H}.
\end{equation*}
that maps an input permeability field $\kappa$ to its corresponding set of LOD-corrected basis functions. Rather than learning the multiscale basis functions directly, we exploit their
LOD construction. For each coarse basis function $\Phi_H^j$, the corresponding
localized multiscale basis function is given by
\begin{equation}
\label{eq:learned_basis_via_corrector}
    \Psi_H^j(\omega)
    :=
    \Phi_H^j - Q_h^\ell(\Phi_H^j)(\omega)
    =
    \Phi_H^j - \sum_{T\in\mathcal{T}_H}
    Q_{T,h}^\ell(\Phi_H^j)(\omega).
\end{equation}
Consequently, it is sufficient to learn the local correction operators
$Q_{T,h}^\ell$ on each patch $\mathcal{N}^\ell(T)$.

We now rewrite the discrete localized corrector problem in a form that is suitable
for supervised learning. Let $T\in\mathcal{T}_H$ be fixed and let
$\{\phi_{\ell,T,m}\}_{m=1}^{N_{\ell,h}(T)}$ denote the fine-scale nodal basis
functions associated with the fine interior degrees of freedom in the patch
$\mathcal{N}^\ell(T)$. For a given coarse basis function $\Phi_H^j$, we write the
local corrector as
\begin{equation}
\label{eq:local_corrector_coeff_expansion}
    Q_{T,h}^\ell(\Phi_H^j)(x,\omega)
    =
    \sum_{m=1}^{N_{\ell,h}(T)}
    w_{T,j,m}(\omega)\,\phi_{\ell,T,m}(x),
\end{equation}
and collect the local correction coefficients in the vector
$\boldsymbol{w}_{T,j}(\omega)\in\mathbb{R}^{N_{\ell,h}(T)}$. The defining local
corrector equation is
\begin{equation}
\label{eq:discrete_localized_corrector_learning}
a_{\omega}\bigl(Q_{T,h}^\ell(\Phi_H^j), w_h\bigr)
= \int_T \kappa(x,\omega)\nabla \Phi_H^j \cdot \nabla w_h \, dx
\quad \text{for all } w_h \in W^\ell(T) \cap V_h.
\end{equation}
The constraint $Q_{T,h}^\ell(\Phi_H^j)\in W^\ell(T)=\ker(\mathcal{I}_H)\cap
\{v:\operatorname{supp}(v)\subset\mathcal{N}^\ell(T)\}$ can be imposed algebraically
by a local constraint matrix $\boldsymbol{C}_{T}$ representing the action of
$\mathcal{I}_H$ on the patch degrees of freedom. More precisely, if
$\boldsymbol{A}_{T}(\omega)\in\mathbb{R}^{N_{\ell,h}(T)\times N_{\ell,h}(T)}$ and
$\boldsymbol{r}_{T,j}(\omega)\in\mathbb{R}^{N_{\ell,h}(T)}$ are defined by
\begin{align*}
\label{eq:local_corrector_matrix_entries}
    (\boldsymbol{A}_{T}(\omega))_{mn}
    &:=
    \int_{\mathcal{N}^\ell(T)}
    \kappa(x,\omega)\nabla\phi_{\ell,T,n}(x)\cdot
    \nabla\phi_{\ell,T,m}(x)\,dx, \\
    (\boldsymbol{r}_{T,j}(\omega))_{m}
    &:=
    \int_T
    \kappa(x,\omega)\nabla\Phi_H^j(x)\cdot
    \nabla\phi_{\ell,T,m}(x)\,dx,
\end{align*}
then the local correction coefficients are obtained from the saddle-point system
\begin{equation}
\label{eq:local_corrector_saddle_point}
\begin{pmatrix}
\boldsymbol{A}_{T}(\omega) & \boldsymbol{C}_{T}^{\top} \\
\boldsymbol{C}_{T} & \boldsymbol{0}
\end{pmatrix}
\begin{pmatrix}
\boldsymbol{w}_{T,j}(\omega) \\
\boldsymbol{\lambda}_{T,j}(\omega)
\end{pmatrix}
=
\begin{pmatrix}
\boldsymbol{r}_{T,j}(\omega) \\
\boldsymbol{0}
\end{pmatrix}.
\end{equation}

Here, $\boldsymbol{\lambda}_{T,j}(\omega)$ is a vector of Lagrange multipliers that
enforces the fine-scale constraint $\mathcal{I}_H Q_{T,h}^\ell(\Phi_H^j)=0$ on the
patch. If we let $\boldsymbol{w}_T \in \mathbb{R}^{c_d \times N_{\ell,h}}$ be correction values associated with the patch centered around $T \in \mathcal{T}_H$ for each of the $c_d$ degrees of freedom in the element $T$, the numerical construction of the LOD basis produces, for each
realization $\omega$, a collection of local coefficient matrices
\begin{equation}
\label{eq:local_corrector_tensor}
    \boldsymbol{W}(\omega)
    :=
    \bigl\{\boldsymbol{w}_{T}(\omega)
    \;:\; T\in\mathcal{T}_H\}.
\end{equation}
Equivalently, after choosing a fixed ordering of patches and local fine degrees of
freedom, this collection can be viewed as a sparse tensor whose entries are the
numbers $w_{T,j,m}(\omega)$ in~\eqref{eq:local_corrector_coeff_expansion}. Based on the above formulation, instead of trying to directly approximate (\ref{eq:basis_map_learning}), our basis-correction network is trained to approximate the map
\begin{equation}
\label{eq:correction_map_learning}
    %\mathcal{B}_{\theta_B}:
    \kappa(\cdot,\omega)
    \longmapsto
    \widehat{\boldsymbol{W}}(\omega)
    %= \bigl\{\widehat{\boldsymbol{w}}_{T,j,\theta_B}(\omega) \;:\; T\in\mathcal{T}_H,\; j=1,\ldots,N_H\bigr\}.
\end{equation}

\begin{algorithm}[!t]
\caption{Overall Training Procedure for LOD-MSNO}
\label{alg:joint_training}
\begin{algorithmic}
\Require
Training data $\{(\kappa((\omega_i)),\boldsymbol{u}_{\mathrm{LOD}}(\omega_i),
\boldsymbol{W}(\omega_i))\}_{i=1}^{M}$ with ${\lbrace \omega_i \rbrace}_{i=1}^{M}$ being an i.i.d sequence in $\Omega$.
\Statex
\State Initialize the parameters of the coefficient and basis model $(\theta_c, \theta_B)$.
\While{the coefficient validation loss has not converged}
    \State Draw a mini-batch $\mathcal{I}\subset\{1,\ldots,M\}$
    \ForAll{$i\in\mathcal{I}$}
        \State Train the coefficient model with the energy loss~\eqref{eq:energy_loss}:
        \begin{equation*}
        \mathcal{L}_{E,M}(\boldsymbol{u})
        :=
        \frac{1}{M}\sum_{i \in \mathcal{I}}
        \left|
        \boldsymbol{A}_{\mathrm{LOD}}^{1/2}(\omega_i)
        \bigl(\boldsymbol{u}(\omega_i)-\boldsymbol{u}_{\mathrm{LOD}}(\omega_i)\bigr)
        \right|.
    \end{equation*}
    \EndFor
\EndWhile
%\State Initialize the joint model with the pretrained parameters $(\theta_u,\theta_B)$.

\While{the basis validation loss has not converged}
    \State Draw a mini-batch $\mathcal{I}\subset\{1,\ldots,M\}$ and a subset $\mathcal{T} \subset \mathcal{T}_H$.
    \ForAll{$i\in\mathcal{I}, T \in \mathcal{T}$}
        \State Train the basis-correction model with the correction loss~\eqref{eq:basis_supervised_loss}:
        \begin{equation*}
            \mathcal{L}_{basis,M}(\theta_B)
            :=
            \frac{1}{M}
            \sum_{i=1}^{M}
            \sum_{T\in\mathcal{T}}
            \sum_{j=1}^{N_H}
            \frac{1}{N_{\ell,h}(T)}
            \left\|
            \boldsymbol{P}_T\widehat{\boldsymbol{w}}_{T,j}(\omega_i)
            -
            \boldsymbol{w}_{T,j}(\omega_i)
            \right\|.
        \end{equation*}
    \EndFor
\EndWhile
\State Predict LOD coefficients $\widehat{\boldsymbol{u}}_{\mathrm{LOD}}^{E,M}(\omega_i)$ from $\kappa_i$.
\State Predict local correction coefficients $\widehat{\boldsymbol{W}}(\omega_i)=\{\widehat{\boldsymbol{w}}_{T}(\omega_i):T\in\mathcal{T}_H\}$.
\State Assemble the learned correction matrix $\widehat{\boldsymbol{Q}}_h^{\ell}(\omega_i)$ from $\widehat{\boldsymbol{W}}(\omega_i)$.
\State Reconstruct the full LOD solution via (\ref{eq:full_solution_pred}).
%\If{the joint validation error has not converged}
%    \State Retrain $\theta_c$ and $\theta_B$ using the joint reconstruction loss~\eqref{eq:joint_reconstruction_loss}.
%    \State \Return $(\theta_c, \theta_B)$
%\Else
%    \State \Return $(\theta_c, \theta_B)$
%\EndIf

\end{algorithmic}
\end{algorithm}

Given a training set
\begin{equation}
\label{eq:basis_training_dataset}
    \mathcal{D}_{B}
    :=
    \bigl\{(\kappa_i,\boldsymbol{W}_i)\bigr\}_{i=1}^{M},
    \qquad
    \boldsymbol{W}_i=\boldsymbol{W}(\omega_i),
\end{equation}
where $\boldsymbol{W}_i$ is computed by solving~\eqref{eq:local_corrector_saddle_point} for all relevant patches and coarse basis functions, we use the supervised loss function
\begin{equation}
\label{eq:basis_supervised_loss}
    \mathcal{L}_{basis,M}(\theta_B)
    :=
    \frac{1}{M}
    \sum_{i=1}^{M}
    \sum_{T\in\mathcal{T}_H}
    \sum_{j=1}^{N_H}
    \frac{1}{N_{\ell,h}(T)}
    \left\|
    \boldsymbol{P}_T\widehat{\boldsymbol{w}}_{T,j}(\omega_i)
    -
    \boldsymbol{w}_{T,j}(\omega_i)
    \right\|.
\end{equation}
the learn the parameters $\theta_B$ of the basis-correction network. Here, $\boldsymbol{P}_T$ denotes the nullspace projector
\begin{equation*}
    \boldsymbol{P}_T = I - \boldsymbol{C}_T^T {(\boldsymbol{C}_T\boldsymbol{C}_T^T)}^{-1}\boldsymbol{C}_T
\end{equation*}
that ensures the predicted coefficients $\widehat{\boldsymbol{w}}_{T,j}$ are in the space $\ker(\mathcal{I}_H)\cap
\{v:\operatorname{supp}(v)\subset\mathcal{N}^\ell(T)\}$. In practice, the summation over $j$ only involves coarse basis functions whose
support intersects the element $T$, since all other right-hand sides $\boldsymbol{r}_{T,j}(\omega)$ vanish. Once the local corrections have been predicted, the learned multiscale basis is assembled by
\begin{equation}
\label{eq:learned_multiscale_basis}
    \widehat{\Psi}_{H}^j(\omega)
    :=
    \Phi_H^j
    -
    \sum_{T\in\mathcal{T}_H}
    \sum_{m=1}^{N_{\ell,h}(T)}
    \widehat{\boldsymbol{w}}_{T,j}^{(m)}(\omega)\,\phi_{\ell,T,m}.
\end{equation}
This gives a learned approximation of the full LOD basis map in
\eqref{eq:basis_map_learning} without solving local saddle-point systems at
inference time. To obtain a single neural
operator from the permeability field to the fine-scale solution, both components
are combined through the LOD reconstruction formula. Let $\widehat{\boldsymbol{u}}_{\mathrm{LOD}}(\omega)$ denote the coefficient prediction and let
$\widehat{\boldsymbol{W}}(\omega)$ denote the predicted local correction
coefficients. LOD-MSNO (Joint) reconstructs the full solution as \begin{equation}
\label{eq:full_solution_pred}
    \begin{aligned}
        \widehat{u}_{LOD-MSNO} &= \sum_{j=1}^{N_H}
\widehat{\boldsymbol{u}}_{\mathrm{LOD}}^{E,M,(j)}(\omega)\widehat{\Psi}_H^j(x,\omega) \\
&= \sum_{j=1}^{N_H}
\widehat{\boldsymbol{u}}_{\mathrm{LOD}}^{E,M,(j)}(\omega) \left( \Phi_H^j - \sum_{T\in\mathcal{T}_H}
    \widehat{Q}_{T,h}^\ell(\Phi_H^j)(\omega) \right) \\
& = \sum_{j=1}^{N_H}
\widehat{\boldsymbol{u}}_{\mathrm{LOD}}^{E,M,(j)}(\omega)\Phi_H^j - \sum_{j=1}^{N_H}\widehat{\boldsymbol{u}}_{\mathrm{LOD}}^{E,M,(j)}(\omega)\sum_{T\in\mathcal{T}_H} \sum_{m=1}^{N_{\ell,h}(T)}
    \widehat{\boldsymbol{w}}_{T,j}^{(m)}(\omega)\,\phi_{\ell,T,m}
    \end{aligned}
\end{equation}

The coefficient network and the basis-correction network define two complementary
surrogates. The first network approximates the coefficient map
$\kappa(\cdot,\omega)\mapsto\boldsymbol{u}_{\mathrm{LOD}}(\omega)$, whereas the
second network approximates the correction map
$\kappa(\cdot,\omega)\mapsto\boldsymbol{W}(\omega)$. LOD-MSNO then predicts the full solution via (\ref{eq:full_solution_pred}). The full procedure to train the full LOD-Multiscale Neural Operator is summarized in Algorithm \ref{alg:joint_training}.

\begin{comment}
\begin{remark}
The two networks are trained separately, so there might be a possibility that the errors of the two networks accumulate and lead to a nonaccurate or non-physical prediction. In what follows, we shall describe an additional, optional training stage to jointly learn the coefficient and basis network.

Denote by $\boldsymbol{U}_{LOD-MSNO}$ the evaluation of numerically computed solution $u_{\mathrm{LOD}}$ on the fine degrees of freedom $\mathcal{N}_h$ and let $\widehat{\boldsymbol{U}}_{LOD-MSNO}$ be the evaluation of the predicted solution (\ref{eq:full_solution_pred}) on $\mathcal{N}_h$. We shall use $\boldsymbol{U}_{LOD-MSNO}$ as a training target and re-train the parameters of both the coefficient and the basis model using the loss function
\begin{equation}
\label{eq:joint_reconstruction_loss}
    \mathcal{L}_{\mathrm{fine-tune},M}(\theta_u,\theta_B)
    :=
    \frac{1}{M}
    \sum_{i=1}^{M}
    \left\|
    \widehat{\boldsymbol{U}}_{LOD-MSNO}
    -
    \boldsymbol{U}_{LOD-MSNO}
    \right\|.
\end{equation}
The purpose of this additional fine-tuning step is to harmonize the two pretrained models and to make sure that the coefficient model and basis model are together producing an accurate and physically consistent prediction. This objective backpropagates through both the coefficient
prediction and the predicted correction coefficients, and therefore adjusts the two components with respect to the final quantity of interest.
\end{remark}
\end{comment}

\section{Experiments}
\label{sec:experiments}

We evaluate the proposed LOD-MSNO on the steady Darcy flow (\ref{darcy_eq}) problem, where the objective is to approximate the LOD representation of the solution from the permeability coefficient $\kappa$. As already indicated through naming, we distinguish two models. \emph{LOD-MSNO (Coeff)} is the coefficient model that assumes the exact corrected multiscale basis $\{\Psi_H^i\}$ to be available and learns the map from $\kappa$ to the expansion coefficients $\boldsymbol{u}_{\mathrm{LOD}}$ in \eqref{lod_linearcombi}. The full PDE solution is then reconstructed as a linear combination of the exact corrected basis functions with the learned coefficients. \emph{LOD-MSNO (Joint)} is the combination of the coefficient model and the basis model that predicts, by contrast, both the coefficients $\widehat{\boldsymbol{u}}_{\mathrm{LOD}}$ and the localized correction operators that define the coefficient-dependent multiscale basis as described in Section~\ref{sec:basis_learning} and reconstructs the full solution via (\ref{eq:full_solution_pred}). Thus, the coefficient model only replaces the LOD-system (\ref{eq:lod_linear_eq}) assembly and solve, whereas the basis model additionally replaces the localized basis corrector computations and predicts the solution to (\ref{darcy_eq}) without the need of any numerical computations.

\paragraph{Baseline models}
We compare our method against a range of established baselines to ensure a balanced and meaningful evaluation. First, we include FNO \citep{li2021fourierneuraloperatorparametric}, CNO \citep{raonic2023convolutional}, and UNO \citep{rahman2022unet}, which are widely used and representative neural operator architectures based on the iterative application of kernel integrals.  Similarly, Transolver \citep{Transolver} is incorporated as a multi-purpose transformer-based operator model capable of capturing long-range dependencies. These models serve as standard data-driven operator learning benchmarks. To ensure fairness with respect to physics-informed approaches, we additionally compare against PINO \citep{PINO}, as our method also incorporates physical structure—specifically through the use of problem-adapted basis functions and the inclusion of the LOD stiffness matrix $\boldsymbol{A}_{LOD}$ in the loss function. We further include FEONet \citep{lee2025feonet} as a representative hybrid neural network--numerical method, since it also follows a coefficient-learning paradigm in which the solution is reconstructed as a linear combination of basis functions. In contrast to LOD-MSNO, however, FEONet uses fixed coarse $P1$ finite element basis functions rather than coefficient-dependent corrected LOD basis functions.  Finally, we consider PI-DeepONet \citep{Goswami2022PhysicsInformedDN}, which is both physics-informed and structurally similar to our approach in that it represents the solution as a linear expansion over a learned set of basis functions. Our LOD-MSNO models are trained to predict the LOD coefficient vector $\boldsymbol{u}_{LOD}$, from which the full fine-scale solution is reconstructed using the corrected LOD basis.

\subsection{Benchmark datasets}
As depicted in Figure \ref{fig:Darcy_coeffs}, training and test datasets are generated using four distinct classes of permeability fields designed to probe different multiscale and high-contrast regimes. In the following, we refer to them as \emph{lognormal1}, \emph{lognormal2}, \emph{quantile}, and \emph{checkerboard}. These datasets are specifically constructed to highlight a key shortcoming of neural operator models when applied to multiscale problems and are inspired by industry-relevant real-world applications. A more detailed description of the dataset characteristics and its generation process can be found in~\ref{app:datasets}.
\paragraph{Lognormal coefficients}
For the lognormal datasets, we consider permeability fields of the form $\kappa(x) = \exp(Z(x))$ where $Z$ is a centered Gaussian random field with Whittle--Mat\'ern covariance with smoothness parameter $\nu>0$, correlation length $\ell>0$, and variance $\sigma^2>0$.  
We distinguish two regimes: (1) \textit{Lognormal1} with $\sigma=1.0$, $\ell=0.1$, (2) \textit{Lognormal2} with $\sigma=2.0$, $\ell=0.3$. The second setting exhibits significantly stronger oscillations and substantially higher contrast, making it particularly challenging for operator learning methods.
\paragraph{Quantile coefficients}
For the quantile dataset, we first sample a Gaussian random field $Z$ with squared-exponential covariance and define a lognormal field $\exp(Z)$. The resulting values are then discretized into a finite set of prescribed permeability levels via empirical quantiles and randomly permuted across the domain, yielding piecewise-constant coefficients with sharp jumps and high contrast.
\paragraph{Checkerboard coefficients.}
The checkerboard coefficients are defined as piecewise-constant random fields on a fine Cartesian grid, where each cell independently takes values from a fixed set of permeability levels. This yields coefficients with discontinuities at the grid scale and no spatial correlation.

\subsection{Evaluation of LOD-MSNO}

\subsubsection{Comparison of baseline models and LOD-MSNO}

\begin{figure*}[t]
  \centering
    \includegraphics[width=\linewidth]{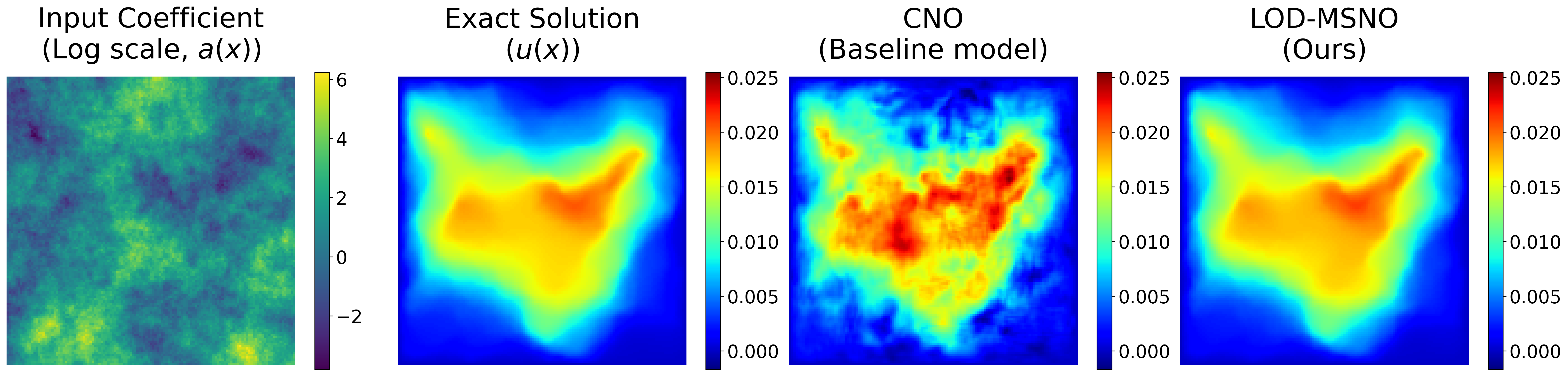}
    \caption{Prediction errors for lognormal2 coefficients using the best-performing baseline model and LOD-MSNO(Coeff).}
  \label{fig:solution_profile}
\end{figure*}

In this section, we fix the coarse and fine mesh sizes to $H=2^{-4}$ and $h=2^{-7}$ for the evaluation of LOD-MSNO.
\textit{LOD-MSNO (Coeff)} uses the exact corrected multiscale basis functions and only replaces the LOD coefficient solve with a learned coefficient predictor.
The coefficient model is realized by \texttt{GlobalCoeffNet}, which maps the normalized permeability field to the coefficient vector $\boldsymbol{u}_{\mathrm{LOD}}$ associated with the corrected multiscale basis $\{\Psi_H^{i}\}$. It first extracts fine-grid features by a convolutional encoder built from Conv--GroupNorm--GELU and residual blocks and then performs nonlinear mixing on the coarse LOD grid by residual layers, so that the output is restricted to the interior coarse degrees of freedom. Training is supervised by precomputed LOD coefficient vectors and uses the energy-based objective in \eqref{eq:energy_loss}, which weights coefficient errors by the LOD stiffness matrix and therefore reflects their contribution to the reconstructed solution. At inference time, the predicted coefficients are combined with the exact corrected basis functions to reconstruct the solution, which is then evaluated on a uniform $128\times128$ grid.

\textit{LOD-MSNO (Joint)} combines the coefficient model with a learned basis-correction model, so that both the expansion coefficients $\boldsymbol{u}_{\mathrm{LOD}}$ and the coefficient-dependent localized LOD basis ${\lbrace \Psi_H^j \rbrace}_{j=1}^{N_H}$ are predicted from the permeability field.
The basis model is realized by \texttt{PatchCorrectionNet}, which acts locally on oversampling patches and predicts the correction coefficients associated with the three local coarse basis functions of each triangular coarse element.
Because boundary and interior patches have different active fine degrees of freedom, each patch is embedded into a common rectangular patch canvas using padding, and the network receives the normalized log-permeability together with input and target support masks.
Architecturally, \texttt{PatchCorrectionNet} is a U-Net-style convolutional encoder--decoder with skip connections, using the same Conv--GroupNorm--GELU and residual blocks as the coefficient model.
The local patch predictions are assembled into a global approximation of the LOD correction operator and combined with the predicted coefficient vector to reconstruct the fine-scale solution.

All baseline models are trained directly on pairs $\{(\kappa_i, u_h^{(i)})\}$ of permeability fields and corresponding fine-scale reference solutions computed by (\ref{eq:fine_fem}) on the fine grid with mesh size $h$, using the relative $L^2$ error as the training objective. For the models LOD-MSNO, FNO, CNO, and UNO, we apply the same input preprocessing and normalization pipeline, consisting of a logarithmic transformation of the permeability field, gradient-magnitude and coordinate channels, and channel-wise input as well as per-component output normalization (see Appendix~\ref{app:arch_lodmimetic} for details). For PI-DeepONet and Transolver, which do not support additional input channels, the gradient-magnitude and coordinate augmentations (\texttt{add\_grad} and \texttt{add\_coords}) are therefore omitted. For evaluation, we report the mean and standard deviation of the relative $L^2$ error (Rel-\(L^2\)) computed on the fine grid, where $u$ denotes the reference solution and $\widehat{u}$ the model prediction. 

\begin{table*}[t]
  \caption{Mean $\pm$ standard deviation of the test relative $L^2$ errors on the fine grid with $h = 2^{-7}$ across 5 independent runs. The best-performing model is shown in \textbf{bold}, and the second-best is underlined.
}
  \label{tab:main_comparison}
  \centering
  \setlength{\tabcolsep}{7pt}
  \renewcommand{\arraystretch}{1.12}
  \resizebox{\textwidth}{!}{%
  \begin{tabular}{lcccc}
    \toprule
    Model & Lognormal1 & Lognormal2 & Quantile & Checkerboard \\
    \midrule
    FNO~\citep{li2021fourierneuraloperatorparametric} & $0.049 \pm 0.001$ & $0.163 \pm 0.002$ & $0.254 \pm 0.010$ & $0.064 \pm 0.000$ \\
    CNO~\citep{raonic2023convolutional} & $\underline{0.018 \pm 0.001}$ & $0.093 \pm 0.003$ & $\underline{0.120 \pm 0.007}$ & $\mathbf{0.010 \pm 0.001}$ \\
    UNO~\citep{rahman2022unet} & $0.044 \pm 0.000$ & $0.160 \pm 0.003$ & $0.239 \pm 0.008$ & $0.064 \pm 0.001$ \\
    PI-DeepONet~\citep{Goswami2022PhysicsInformedDN} & $0.138 \pm 0.001$ & $2.028 \pm 0.084$ & $1.282 \pm 0.015$ & $0.062 \pm 0.000$ \\
    Transolver~\citep{Transolver} & $0.082 \pm 0.022$ & $0.341 \pm 0.016$ & $0.610 \pm 0.050$ & $0.053 \pm 0.003$ \\
    PINO~\citep{PINO} & $0.025 \pm 0.000$ & $0.145 \pm 0.002$ & $0.203 \pm 0.012$ & $0.035 \pm 0.000$ \\
    FEONet~\citep{lee2025feonet} & $0.099 \pm 0.000$ & $1.805 \pm 0.073$ & $0.129 \pm 0.020$ & $0.668 \pm 0.109$ \\
    \textbf{LOD-MSNO (Coeff)} & $\mathbf{0.012 \pm 0.002}$ & $\mathbf{0.042 \pm 0.006}$ & $\mathbf{0.118 \pm 0.017}$ & $\underline{0.018 \pm 0.003}$ \\
    \textbf{LOD-MSNO (Joint)} & $0.029 \pm 0.001$ & $\underline{0.068 \pm 0.002}$ & $0.302 \pm 0.016$ & $0.029 \pm 0.001$ \\
    \bottomrule
  \end{tabular}
  }
  \vskip -0.1in
\end{table*}

The aggregated results are summarized in Table~\ref{tab:main_comparison}. The results indicate that LOD-MSNO(Coeff) attains competitive and, in three out of four cases, improved relative \(L^2\)-error accuracy compared to the considered baselines, with particularly strong performance on the multiscale lognormal datasets, while remaining comparable to the best-performing methods on the other coefficient configurations. In comparison to FEONet and PI-DeepONet, the results highlight the advantage of employing problem-adapted multiscale basis functions. While FEONet relies on a fixed coarse \(P1\)-FEM basis and PI-DeepONet learns a global set of basis functions via the trunk network, LOD-MSNO leverages LOD basis functions that are specifically tailored to the underlying coefficient field. This leads to consistently improved accuracy, particularly in strongly heterogeneous regimes. Moreover, compared to physics-informed approaches such as PINO and PI-DeepONet, the results suggest that incorporating the physical structure through a strong numerical prior is more effective than enforcing the PDE via autodifferentiation-based loss terms. In particular, constructing the solution using the LOD framework allows the model to directly encode relevant multiscale features of the problem, which appears to be a more robust way of injecting physical information than relying solely on PDE residual minimization. The results for LOD-MSNO (Joint) further show that the fully learned surrogate remains competitive despite also replacing the localized corrector computations. In particular, on the high-contrast lognormal2 dataset, LOD-MSNO (Joint) achieves a mean relative \(L^2\) error of $0.068$, outperforming all benchmark models and demonstrating the potential of the full surrogate in high-contrast regimes. 
\begin{comment}
    The larger errors on the quantile and checkerboard datasets indicate that accurately learning localized basis corrections for discontinuous fine-scale coefficients remains the main challenge for the joint model.
\end{comment}

\subsubsection{Ablation study}\label{sec:ablations}

\begin{figure}[t]
  \centering
  \includegraphics[width=0.48\linewidth]{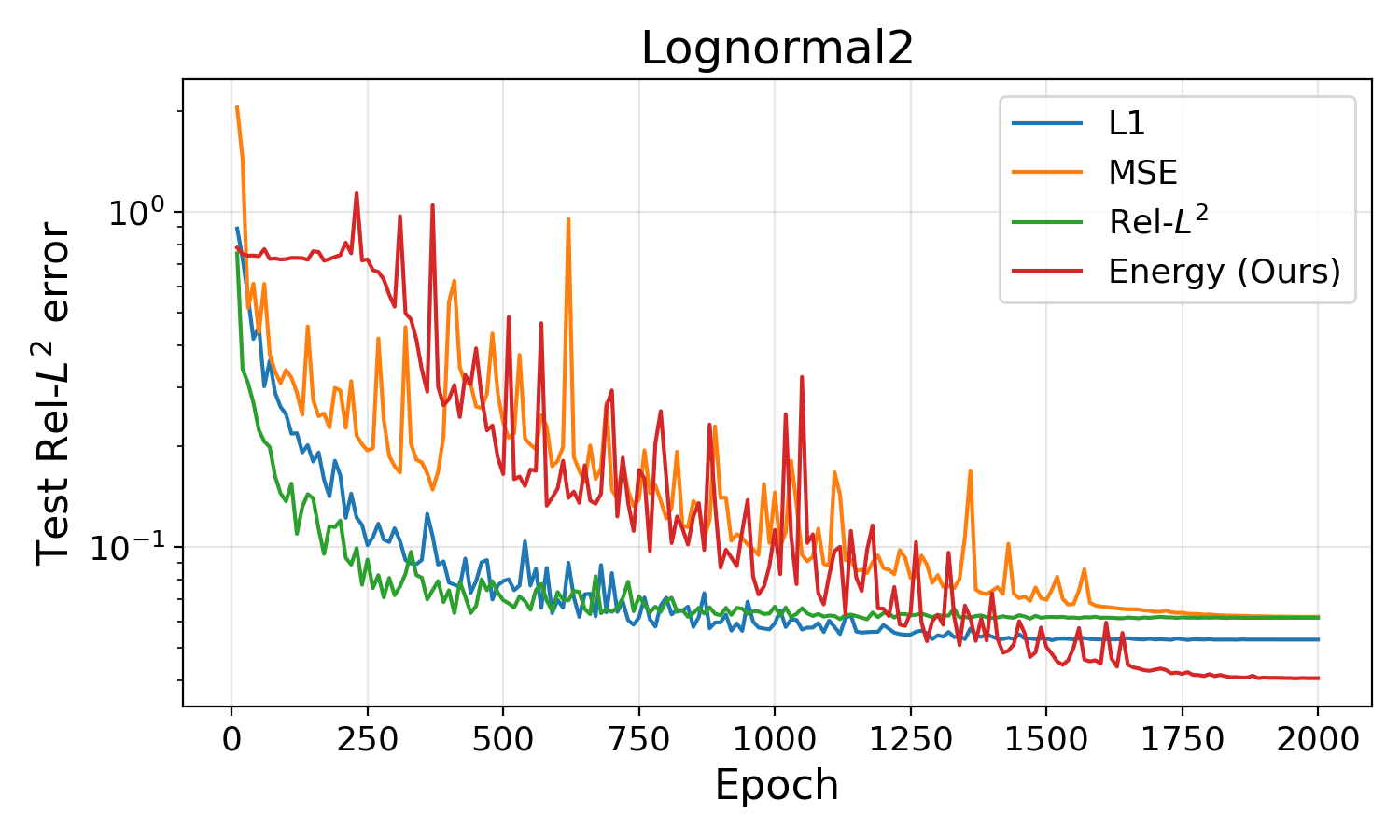}\hfill
  \includegraphics[width=0.48\linewidth]{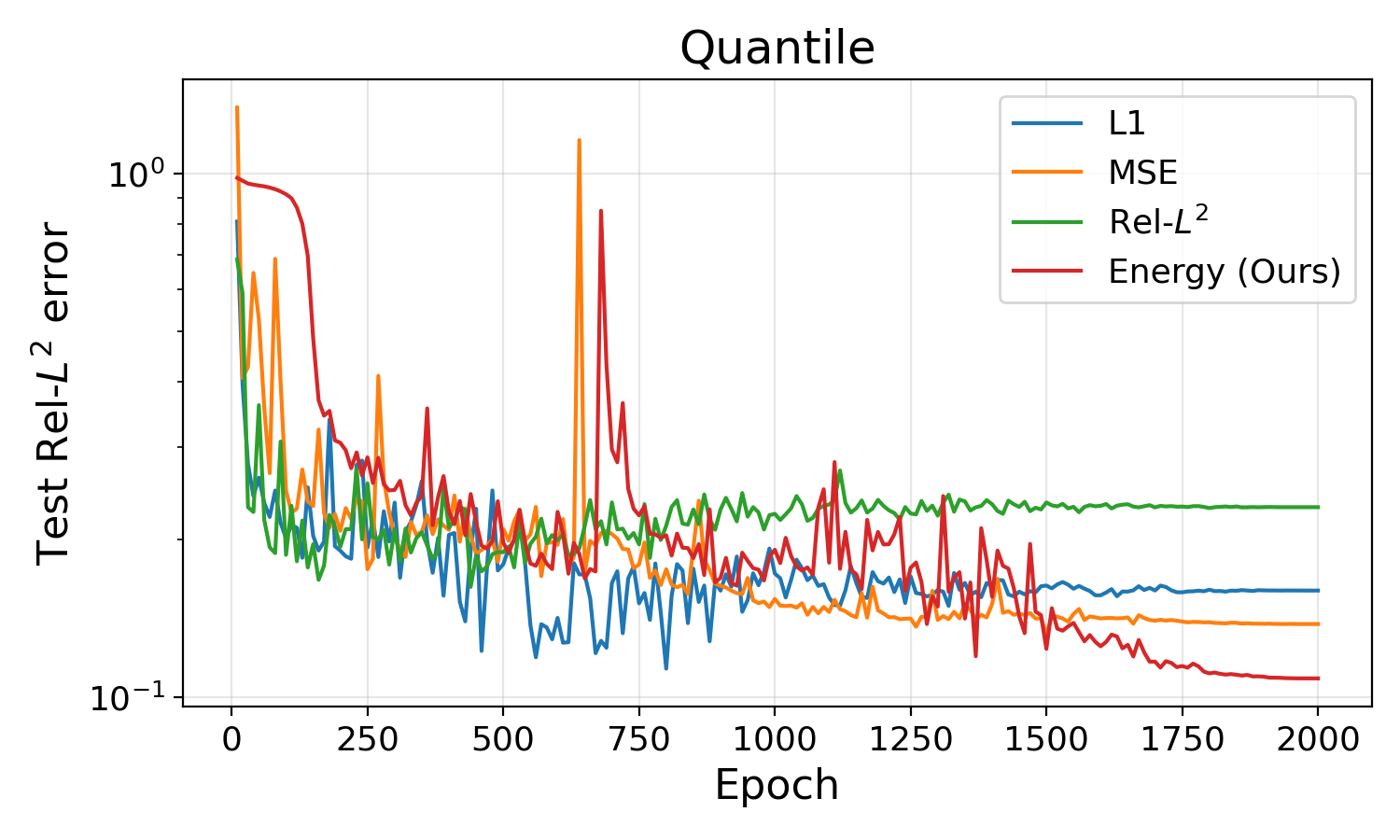}
  \caption{Loss ablation for predicting $\boldsymbol{u}_{\mathrm{LOD}}$. Test $\mathrm{Rel}\text{-}L^2$ error vs.\ epoch for lognormal2 (left) and quantile (right).}
  \label{fig:ablation_losstype}
\end{figure}

\begin{table*}[t]
  \caption{Ablation on preprocessing and input-channel augmentation for LOD-MSNO(Coeff). Fine-grid test relative $L^2$ errors.}
  \label{tab:ablation_errors}
  \centering
  \footnotesize
  \setlength{\tabcolsep}{3pt}
  \resizebox{\textwidth}{!}{%
  \begin{tabular}{llcccc}
    \toprule
    \multirow{2}{*}{Dataset} & \multirow{2}{*}{\begin{tabular}{@{}c@{}}Training\\setting\end{tabular}}
      & \multicolumn{4}{c}{Variants} \\
    \cmidrule(lr){3-6}
    & & \begin{tabular}{@{}c@{}}LOD-MSNO\\(Coeff)\end{tabular}
      & \begin{tabular}{@{}c@{}}w/o log \\+normalization\end{tabular}
      & \begin{tabular}{@{}c@{}}w/o \texttt{add\_grad} \\+\texttt{add\_coords}\end{tabular}
      & w/o all \\
    \midrule

    \multirow{2}{*}{} & {\scriptsize \begin{tabular}{@{}l@{}}log +\\normalization\end{tabular}}
      & \bluecheck & \xmark     & \bluecheck & \xmark \\
    & {\scriptsize \begin{tabular}{@{}l@{}}\texttt{add\_grad} +\\\texttt{add\_coords}\end{tabular}}
      & \bluecheck & \bluecheck & \xmark     & \xmark \\
    \midrule
    Lognormal1   &  & \textbf{0.025} & 0.080 & 0.034 & 0.236 \\
    Lognormal2   &  & \textbf{0.042} & 0.135 & 0.096 & 1.215 \\
    Quantile     &  & \textbf{0.089} & 0.247 & 0.319 & 0.313 \\
    Checkerboard &  & \textbf{0.020} & 0.020 & 0.056 & 0.188 \\
    \bottomrule
  \end{tabular}%
  }
  \vskip -0.1in
\end{table*}

We conduct an ablation study to assess the impact of the training loss function on the performance and generalization ability of LOD-MSNO (Coeff). In particular, we train the model using three alternative losses: the standard $L^2$ loss, the relative $L^2$ loss, and the $L^1$ loss. We specifically use LOD-MSNO (Coeff) to isolate the effect of the choice of loss function from LOD-MSNO (Joint), in which the error predicting the basis also influences the relative L2 error on the fine grid.
All models are trained on 500 samples with lognormal2 and quantile-type permeability fields, and we report the evolution of the test relative $L^2$ error during training. As shown in Figure \ref{fig:ablation_losstype}, training with the energy-based loss yields the lowest final error for both coefficient classes, while the $L^1$ and (relative) $L^2$ saturate at higher error levels. Especially in the lognormal2 case, the energy-loss exhibits the most stable convergence behavior, whereas the $L^2$- and $L^1$-based training runs display larger fluctuations and higher asymptotic errors. 
Table~\ref{tab:ablation_errors} reports the impact of the proposed preprocessing and channel augmentation strategies on the performance of LOD-MSNO (Coeff).  
Here, all models are trained using the energy-norm loss on 500 samples from the Quantile dataset.  
Across all datasets, the full configuration—combining logarithmic scaling, normalization, and the additional gradient-magnitude and coordinate channels—consistently yields the lowest relative $L^2$ errors. Removing either the log-scaling and normalization or the auxiliary channels leads to a noticeable degradation in accuracy, while omitting all preprocessing steps results in a substantial loss of performance. These results demonstrate that both the normalization strategy and the inclusion of gradient and coordinate information play an important role in enabling accurate and stable learning for LOD-MSNO(Coeff).

\subsection{Comparison of training and inference time}
\label{sec:time_comparison}

Table~\ref{tab:time_comparison} summarizes the data-generation costs for different mesh resolutions; all times are measured in seconds per sample.
For the LOD basis construction, we report the local patch-corrector cost, the global basis assembly cost, and their sum parallelized over 16 CPU cores, the sequential global basis assembly cost, and their sum.
The localized corrector problems $Q_{T,h}^{\ell}$ are embarrassingly parallel---each depends only on the fine-grid stiffness matrix restricted to its patch---so the patch-corrector step distributes exactly across cores; scaling to more than 16 cores would reduce this component proportionally.
By contrast, assembling the LOD basis (given by the rows of the matrix $G = P_1 - Q_h^{\ell}$, which is the difference of the prolongation
matrix P1 that encodes the coarse nodal basis functions and the correction operator $Q_h^{\ell}$) via global sparse-matrix stacking and multiplication across all patches such that is an aggregation step that does not decompose into independent patch-level tasks and is therefore measured without inter-task parallelization.
Classical LOD timings were obtained on a 16-core CPU; all neural-network timings (training and inference) were measured on an NVIDIA A100 80GB GPU.

\begin{table*}[t]
  \caption{Comparison of data-generation times, measured in seconds per sample. Local patch-corrector times assume 16 CPU cores. Global basis assembly involves global sparse matrix operations and is measured sequentially on the same CPU.}
  \label{tab:time_comparison}
  \centering
  \setlength{\tabcolsep}{3pt}
  \renewcommand{\arraystretch}{1.12}
  \resizebox{\textwidth}{!}{%
  \begin{tabular}{l ccc ccc c}
    \toprule
    \multirow{2}{*}{Resolution}
    & \multirow{2}{*}{Fine FEM}
    & \multicolumn{3}{c}{LOD Basis}
    & \multicolumn{3}{c}{LOD Coeff} \\
    \cmidrule(lr){3-5}\cmidrule(lr){6-8}
    & & \begin{tabular}{@{}c@{}}Local patch\\correctors\end{tabular} & \begin{tabular}{@{}c@{}}Global basis\\assembly\end{tabular} & Total
    & Coeff. assembly & Coeff. solve & Total \\
    \midrule
    $h=7,\ H=4$ & 0.1135 & 0.7842 & 0.4890 & 1.2732 & 0.2145 & 0.0020 & 0.2165 \\
    $h=8,\ H=4$ & 0.5590 & 4.2527 & 1.8327 & 6.0854 & 0.8500 & 0.0021 & 0.8521 \\
    $h=8,\ H=5$ & 0.5606 & 4.5842 & 4.7611 & 9.3453 & 1.2688 & 0.0752 & 1.3440 \\
    $h=9,\ H=6$ & 4.3828 & 20.0033 & 72.9008 & 92.9041 & 6.9178 & 3.1047 & 10.0225 \\
    \bottomrule
  \end{tabular}%
  }
  \vskip -0.1in
\end{table*}

The results show that the localized basis-corrector computations dominate the classical LOD data-generation cost, especially as the fine mesh is refined.
With 16-core parallelization of the local corrector step, the per-sample patch-corrector time is substantially reduced compared to the sequential cost; using more than 16 cores would decrease this cost further proportionally.
The global basis assembly and the coefficient steps remain sequential, and for fine coarse spaces, the global assembly itself becomes a dominant cost---at $h=9$, $H=6$, it accounts for the majority of the basis construction time.
In particular, as $H$ is refined, the coarse LOD solve becomes comparable in cost to the fine FEM solve despite involving fewer degrees of freedom, which highlights the effect of the non-sparsity of the LOD system as a major computational bottleneck.
This observation motivates the coefficient surrogate, whose direct inference of the expansion coefficients avoids repeatedly assembling and solving this dense coarse system and thereby contributes directly to the speedup of LOD-MSNO.
These timings motivate the two surrogate components of LOD-MSNO: the basis model targets the expensive localized corrector computations, whereas the coefficient model targets the assembly and solution of the coarse LOD system.
Consequently, the timing results indicate that LOD-MSNO indeed addresses the main computational bottlenecks of the classical LOD method discussed in Section~\ref{sec:lod_limitations}.

Table~\ref{tab:training_inference_comparison} reports the corresponding neural-network training and single-sample inference times for LOD-MSNO and CNO.
All neural-network models were trained using 1000 training samples, and the coefficient model and CNO were trained for the same number of epochs.
We compare against CNO because it is the strongest benchmark model in terms of accuracy among the standard neural-operator baselines considered in Table~\ref{tab:main_comparison}.

\begin{table*}[t]
  \caption{Comparison of training and inference times, measured in seconds. Inference times are reported per sample using a batch size of one. Training times use 500 training samples, with the coefficient model and CNO trained for the same number of epochs.}
  \label{tab:training_inference_comparison}
  \centering
  \scriptsize
  \setlength{\tabcolsep}{3pt}
  \renewcommand{\arraystretch}{1.02}
  \resizebox{0.82\textwidth}{!}{%
  \begin{tabular}{l ccc ccc}
    \toprule
    \multirow{2}{*}{Resolution}
    & \multicolumn{3}{c}{Training time}
    & \multicolumn{3}{c}{Inference} \\
    \cmidrule(lr){2-4}\cmidrule(lr){5-7}
    & Coeff & Basis & CNO
    & Coeff & Basis & CNO \\
    \midrule
    $h=7,\ H=4$ & 1186 & 4409 & 10366 & 0.0036 & 0.0347 & 0.0061 \\
    $h=8,\ H=4$ & 3922 & 12290 & 36076 & 0.0036 & 0.9686 & 0.0064 \\
    $h=8,\ H=5$ & 4508 & 17652 & 36040 & 0.0036 & 1.3884 & 0.0064 \\
    $h=9,\ H=6$ & 27334 & 71862 & 137714 & 0.0086 & 5.5559 & 0.0207 \\
    \bottomrule
  \end{tabular}%
  }
  \vskip -0.1in
\end{table*}

The results show that both LOD-MSNO components combined are faster to train than CNO across all tested resolutions.
At inference time, the neural surrogates are much faster than the corresponding traditional LOD computations, as can be inferred from the per-sample LOD basis and coefficient generation times in Table~\ref{tab:time_comparison}.
This speedup stems from replacing the repeated localized corrector solves and coarse LOD solves with neural-network predictions, and holds even when the classical pipeline is parallelized over 16 CPU cores for the corrector step.
Consequently, the inference-time results indicate that LOD-MSNO is capable of addressing the main computational bottlenecks of the classical LOD method discussed in Section~\ref{sec:lod_limitations}.
The CNO inference time is lower than that of the basis model because CNO directly outputs the solution as an image on the grid.
By contrast, the basis model predicts local basis corrections patch by patch and must assemble the global correction operator and reconstructed solution from these individual local predictions.

\section{Conclusion and outlook}

In this work, we introduced LOD-MSNO, a hybrid neural-operator framework for multiscale Darcy-Flow Problems that combines the problem-adapted approximation structure of the Localized Orthogonal Decomposition method with deep learning-based surrogate modeling. The method is designed based on the Localized Orthogonal Decomposition (LOD) method and aims to replace the expensive numerical computations associated with it through a mainly data-driven approach. In the coefficient-only variant, LOD-MSNO (Coeff) predicts the LOD expansion coefficients and reconstructs the fine-scale solution using the exact corrected multiscale basis, thereby avoiding the assembly and solution of a non-sparse system of equations. In the joint variant, LOD-MSNO (Joint) additionally learns the localized basis corrections, thereby also targeting the second computational bottleneck of the LOD method: the coarse LOD solve and the construction of localized correctors. Through a rigorous error analysis, we gave a precise convergence bound for our models. Numerical experiments also show that using the LOD method as a modeling prior is beneficial for challenging rough and high-contrast permeability fields for the Darcy-flow equation. The comparison with other physics-informed baselines further indicates that explicitly incorporating problem-adapted LOD basis information can be more effective than relying only on PDE residuals for physically consistent solutions.

The following directions remain open for future work. A stronger theoretical analysis of the basis-learning component would be valuable, especially for quantifying how errors in the learned local correction operators propagate to the assembled multiscale basis and the final fine-scale reconstruction. Another important direction is to improve the efficiency of the joint model during inference, for example by reducing patch-wise assembly overhead or by exploiting parallel structure in the local corrector predictions. Lastly, since LOD methods are available for a wide range of problem classes, including elliptic eigenvalue problems and parabolic equations, extending LOD-MSNO to these settings constitutes another promising avenue for future work.

\section*{Acknowledgements}

This work was supported by Samsung Electronics Co., Ltd (IO250307-12279-01), by the Institute of Information \& Communications Technology Planning \& Evaluation (IITP) grant funded by the Korea government (MSIT) [RS-2021-II211341, Artificial Intelligence Graduate School Program (Chung-Ang University)], and by the Chung-Ang University Young Scientist Scholarship in 2025.

\section*{Declaration of generative AI and AI-assisted technologies in the manuscript preparation process}

During the preparation of this work, the author(s) used Prism AI in order to polish and improve the scientific language and readability of human-written texts, as well as to assist with the formatting of formulas and tables. After using this tool/service, the author(s) reviewed and edited the content as needed and take(s) full responsibility for the content of the published article.

\bibliographystyle{elsarticle/elsarticle-harv}
\bibliography{references}

@inproceedings{fanaskov2023spectral,
  title={Spectral neural operators},
  author={Fanaskov, Vladimir Sergeevich and Oseledets, Ivan V},
  booktitle={Doklady Mathematics},
  pages={S226--S232},
  year={2023},
  organization={Springer}
}

@book{bear1988dynamics,
  title={Dynamics of Fluids in Porous Media},
  author={Bear, J.},
  isbn={9780486656755},
  lccn={lc87034940},
  series={Dover Civil and Mechanical Engineering Series},
  url={https://books.google.co.kr/books?id=-XEaxd3hGzoC},
  year={1988},
  publisher={Dover}
}

@article{kalina2023feann,
  title={FEANN: an efficient data-driven multiscale approach based on physics-constrained neural networks and automated data mining},
  author={Karl A. Kalina and Lennart Linden and Jorg Brummund and Markus Kastner},
  journal={Computational Mechanics},
  year={2023},
  volume={71.5},
  pages={827-851},
  url={https://link.springer.com/article/10.1007/s00466-022-02260-0}
}

@Inbook{Yang2024,
author="Yang, Fu-Bao
and Huang, Ji-Ping",
title="Convective Heat Transfer in Porous Materials",
bookTitle="Diffusionics: Diffusion Process Controlled by Diffusion Metamaterials",
year="2024",
publisher="Springer Nature Singapore",
address="Singapore",
pages="129--143",
isbn="978-981-97-0487-3",
doi="10.1007/978-981-97-0487-3_7",
url="https://doi.org/10.1007/978-981-97-0487-3_7"
}

@article{yizheng2026pretraining,
  title={A pretraining-finetuning computational framework for material homogenization},
  author={Wang Yizheng and Xiang Li and Ziming Yan and Shuaifeng Ma and Jinshuai Bai and Bokai Liu and Xiaoying Zhuang and Timoin Rabczuk and Yinghua Liu},
  journal={International Journal of Mechanical Sciences},
  year={2026},
  volume={314},
  pages={111388},
  url={https://www.sciencedirect.com/science/article/abs/pii/S0020740326002444}
}

@article{ricardo2022enhancing,
  title={Enhancing computational fluid dynamics with machine learning},
  author={Vinuesa and Ricardo and and Steven L. Brunton},
  journal={Nature Computational Science},
  year={2022},
  volume={2.6},
  pages={358-366},
  url={https://www.nature.com/articles/s43588-022-00264-7}
}

@article{gentine2018convection,
  title={Could machine learning break the convection parameterization deadlock?},
  author={Pierre Gentine and M. Pritchard and Stephan Rasp and G. Reinaudi and G. Yacalis},
  journal={Geophysical Rresearch Letters},
  year={2018},
  volume={45.11},
  pages={5742-5751},
  url={https://agupubs.onlinelibrary.wiley.com/doi/full/10.1029/2018GL078202}
}

@incollection{KIRKHAM2023431,
title = {Chapter 22 - Electrical analogs for water movement through the soil-plant-atmosphere continuum},
booktitle = {Principles of Soil and Plant Water Relations (Third Edition)},
publisher = {Academic Press},
edition = {Third Edition},
pages = {431-449},
year = {2023},
isbn = {978-0-323-95641-3},
doi = {https://doi.org/10.1016/B978-0-323-95641-3.00030-1},
url = {https://www.sciencedirect.com/science/article/pii/B9780323956413000301},
author = {M.B. Kirkham}
}

@article{peng2021multiscale,
  title={Multiscale modeling meets machine learning: what can we learn?},
  author={Grace C.Y.Peng and Mark Alber and Adrian Buganza Tepole and William R. Cannon and Suvranu De and Savador Dura-Bernal and Krishna Garikipati and George Karniadakis and William W. Lytton and Paris Perdikaris and Linda Petzold and Ellen Kuhl},
  journal={Archives of computational methods in engineering},
  year={2021},
  volume={28.3},
  pages={1017-1037},
  url={https://link.springer.com/article/10.1007/s11831-020-09405-5}
}

@article{ENGWER2019123,
title = {Efficient implementation of the localized orthogonal decomposition method},
journal = {Computer Methods in Applied Mechanics and Engineering},
volume = {350},
pages = {123-153},
year = {2019},
issn = {0045-7825},
doi = {https://doi.org/10.1016/j.cma.2019.02.040},
url = {https://www.sciencedirect.com/science/article/pii/S0045782519301112},
author = {Christian Engwer and Patrick Henning and Axel Målqvist and Daniel Peterseim}
}

@article{RAISSI2019686,
title = {Physics-informed neural networks: A deep learning framework for solving forward and inverse problems involving nonlinear partial differential equations},
journal = {Journal of Computational Physics},
volume = {378},
pages = {686-707},
year = {2019},
issn = {0021-9991},
doi = {https://doi.org/10.1016/j.jcp.2018.10.045},
url = {https://www.sciencedirect.com/science/article/pii/S0021999118307125},
author = {M. Raissi and P. Perdikaris and G.E. Karniadakis}
}

@article{Lu_2021,
   title={Learning nonlinear operators via DeepONet based on the universal approximation theorem of operators},
   volume={3},
   ISSN={2522-5839},
   url={http://dx.doi.org/10.1038/s42256-021-00302-5},
   DOI={10.1038/s42256-021-00302-5},
   number={3},
   journal={Nature Machine Intelligence},
   publisher={Springer Science and Business Media LLC},
   author={Lu, Lu and Jin, Pengzhan and Pang, Guofei and Zhang, Zhongqiang and Karniadakis, George Em},
   year={2021},
   month=mar, pages={218–229} }

@article{Goswami2022PhysicsInformedDN,
  title={Physics-Informed Deep Neural Operator Networks},
  author={Somdatta Goswami and Aniruddha Bora and Yue Yu and George Em Karniadakis},
  journal={ArXiv},
  year={2022},
  volume={abs/2207.05748},
  url={https://api.semanticscholar.org/CorpusID:250626955}
}

@article{SIRIGNANO20181339,
title = {DGM: A deep learning algorithm for solving partial differential equations},
journal = {Journal of Computational Physics},
volume = {375},
pages = {1339-1364},
year = {2018},
issn = {0021-9991},
doi = {https://doi.org/10.1016/j.jcp.2018.08.029},
url = {https://www.sciencedirect.com/science/article/pii/S0021999118305527},
author = {Justin Sirignano and Konstantinos Spiliopoulos}
}

@misc{li2021fourierneuraloperatorparametric,
      title={Fourier Neural Operator for Parametric Partial Differential Equations}, 
      author={Zongyi Li and Nikola Kovachki and Kamyar Azizzadenesheli and Burigede Liu and Kaushik Bhattacharya and Andrew Stuart and Anima Anandkumar},
      year={2021},
      eprint={2010.08895},
      archivePrefix={arXiv},
      primaryClass={cs.LG},
      url={https://arxiv.org/abs/2010.08895}, 
}

@inproceedings{raonic2023convolutional,
author = {Raoni\'{c}, Bogdan and Molinaro, Roberto and De Ryck, Tim and Rohner, Tobias and Bartolucci, Francesca and Alaifari, Rima and Mishra, Siddhartha and de B\'{e}zenac, Emmanuel},
title = {Convolutional neural operators for robust and accurate learning of PDEs},
year = {2023},
publisher = {Curran Associates Inc.},
address = {Red Hook, NY, USA},
booktitle = {Proceedings of the 37th International Conference on Neural Information Processing Systems},
articleno = {3376},
numpages = {14},
location = {New Orleans, LA, USA},
series = {NIPS '23}
}

@article{PINO,
author = {Li, Zongyi and Zheng, Hongkai and Kovachki, Nikola and Jin, David and Chen, Haoxuan and Liu, Burigede and Azizzadenesheli, Kamyar and Anandkumar, Anima},
title = {Physics-Informed Neural Operator for Learning Partial Differential Equations},
year = {2024},
issue_date = {September 2024},
publisher = {Association for Computing Machinery},
address = {New York, NY, USA},
volume = {1},
number = {3},
url = {https://doi.org/10.1145/3648506},
doi = {10.1145/3648506},
journal = {ACM / IMS J. Data Sci.},
month = may,
articleno = {9},
numpages = {27}
}

@book{blanc2023homogenization,
  title={Homogenization Theory for Multiscale Problems: An introduction},
  author={Blanc, Xavier and Le Bris, Claude},
  year={2023},
  publisher={Springer Nature Switzerland},
  series={MS\&A, 21},
  doi={10.1007/978-3-031-21833-0},
  isbn={978-3-031-21832-3}
}

@article{Speconet,
author = {Choi, Junho and Yun, Taehyun and Kim, Namjung and Hong, Youngjoon},
year = {2024},
month = {02},
pages = {116678},
title = {Spectral operator learning for parametric PDEs without data reliance},
volume = {420},
journal = {Computer Methods in Applied Mechanics and Engineering},
doi = {10.1016/j.cma.2023.116678}
}

@article{ulg,
author = {Choi, Junho and Kim, Namjung and Hong, Youngjoon},
year = {2023},
month = {01},
pages = {1-1},
title = {Unsupervised Legendre-Galerkin Neural Network for Solving Partial Differential Equations},
volume = {PP},
journal = {IEEE Access},
doi = {10.1109/ACCESS.2023.3244681}
}

@book{pavliotis2008multiscale,
  title={Multiscale Methods: Averaging and Homogenization},
  author={Pavliotis, Grigorios A and Stuart, Andrew M},
  volume={53},
  year={2008},
  publisher={Springer Science \& Business Media},
  address={New York},
  series={Texts in Applied Mathematics},
  isbn={978-0-387-73828-4}
}

@misc{lee2023hyperdeeponetlearningoperatorcomplex,
      title={HyperDeepONet: learning operator with complex target function space using the limited resources via hypernetwork}, 
      author={Jae Yong Lee and Sung Woong Cho and Hyung Ju Hwang},
      year={2023},
      eprint={2312.15949},
      archivePrefix={arXiv},
      primaryClass={cs.LG},
      url={https://arxiv.org/abs/2312.15949}, 
}

@misc{rahman2022unet,
      title={U-NO: U-shaped Neural Operators}, 
      author={Md Ashiqur Rahman and Zachary E. Ross and Kamyar Azizzadenesheli},
      year={2023},
      eprint={2204.11127},
      archivePrefix={arXiv},
      primaryClass={cs.LG},
      url={https://arxiv.org/abs/2204.11127}, 
}

@article{lodoriginal,
  title={Localization of elliptic multiscale problems},
  author={Målqvist, Axel and Peterseim, Daniel},
  journal={Math. Comput.},
  year={2011},
  volume={83},
  pages={2583-2603},
  url={https://api.semanticscholar.org/CorpusID:267820598}
}

@book{lod_book,
author = {Målqvist, Axel and Peterseim, Daniel},
title = {Numerical Homogenization by Localized Orthogonal Decomposition},
publisher = {Society for Industrial and Applied Mathematics},
year = {2020},
doi = {10.1137/1.9781611976458},
address = {Philadelphia, PA},
edition   = {},
URL = {https://epubs.siam.org/doi/abs/10.1137/1.9781611976458},
eprint = {https://epubs.siam.org/doi/pdf/10.1137/1.9781611976458}
}

@article{lee2025feonet,
author = {Lee, Jae Yong and Ko, Seungchan and Hong, Youngjoon},
title = {Finite Element Operator Network for Solving Elliptic-Type Parametric PDEs},
journal = {SIAM Journal on Scientific Computing},
volume = {47},
number = {2},
pages = {C501-C528},
year = {2025},
doi = {10.1137/23M1623707},
URL = {https://doi.org/10.1137/23M1623707},
eprint = { https://doi.org/10.1137/23M1623707}
}

@article{parab,
author = {Målqvist, Axel and Persson, Anna},
year = {2018},
month = {01},
pages = {},
title = {Multiscale techniques for parabolic equations},
volume = {138},
journal = {Numerische Mathematik},
doi = {10.1007/s00211-017-0905-7}
}

@inproceedings{Transolver,
author = {Wu, Haixu and Luo, Huakun and Wang, Haowen and Wang, Jianmin and Long, Mingsheng},
title = {Transolver: a fast transformer solver for PDEs on general geometries},
year = {2024},
publisher = {JMLR.org},
booktitle = {Proceedings of the 41st International Conference on Machine Learning},
articleno = {2200},
numpages = {25},
location = {Vienna, Austria},
series = {ICML'24}
}

@article{fair,
title = {A comprehensive and fair comparison of two neural operators (with practical extensions) based on FAIR data},
journal = {Computer Methods in Applied Mechanics and Engineering},
volume = {393},
pages = {114778},
year = {2022},
issn = {0045-7825},
doi = {https://doi.org/10.1016/j.cma.2022.114778},
url = {https://www.sciencedirect.com/science/article/pii/S0045782522001207},
author = {Lu Lu and Xuhui Meng and Shengze Cai and Zhiping Mao and Somdatta Goswami and Zhongqiang Zhang and George Em Karniadakis}
}

@article {MR3283907,
    AUTHOR = {Bonizzoni, Francesca and Nobile, Fabio},
     TITLE = {Perturbation analysis for the {D}arcy problem with log-normal
              permeability},
   JOURNAL = {SIAM/ASA J. Uncertain. Quantif.},
  FJOURNAL = {SIAM/ASA Journal on Uncertainty Quantification},
    VOLUME = {2},
      YEAR = {2014},
    NUMBER = {1},
     PAGES = {223--244},
      ISSN = {2166-2525},
   MRCLASS = {35R60 (35B20 35J15 41A58 60H25 76S05)},
  MRNUMBER = {3283907},
MRREVIEWER = {S\"oren\ Dobbersch\"utz},
       DOI = {10.1137/130949415},
       URL = {https://doi.org/10.1137/130949415},
}

@inproceedings{10.5555/3327546.3327630,
author = {Liu, Rosanne and Lehman, Joel and Molino, Piero and Such, Felipe Petroski and Frank, Eric and Sergeev, Alex and Yosinski, Jason},
title = {An intriguing failing of convolutional neural networks and the CoordConv solution},
year = {2018},
publisher = {Curran Associates Inc.},
address = {Red Hook, NY, USA},
booktitle = {Proceedings of the 32nd International Conference on Neural Information Processing Systems},
pages = {9628–9639},
numpages = {12},
location = {Montr\'{e}al, Canada},
series = {NIPS'18}
}

@InProceedings{Wu_2018_ECCV,
author = {Wu, Yuxin and He, Kaiming},
title = {Group Normalization},
booktitle = {Proceedings of the European Conference on Computer Vision (ECCV)},
month = {September},
year = {2018}
}

@article{lu2021learning,
author = {Lu, Lu and Jin, Pengzhan and Pang, Guofei and Zhang, Zhongqiang and Karniadakis, George},
year = {2021},
month = {03},
pages = {218-229},
title = {Learning nonlinear operators via DeepONet based on the universal approximation theorem of operators},
volume = {3},
journal = {Nature Machine Intelligence},
doi = {10.1038/s42256-021-00302-5}
}

@book{efendiev2009multiscale,
  title={Multiscale finite element methods: theory and applications},
  author={Efendiev, Yalchin and Hou, Thomas Y},
  year={2009},
  publisher={Springer Science \& Business Media}
}

@article{hou1997multiscale,
  title={A multiscale finite element method for elliptic problems in composite materials and porous media},
  author={Hou, Thomas Y and Wu, Xiao-Hui},
  journal={Journal of computational physics},
  volume={134},
  number={1},
  pages={169--189},
  year={1997},
  publisher={Elsevier}
}

@InProceedings{cao2021choose,
  author    = {Cao, Shuhao},
  title     = {Choose a {T}ransformer: {F}ourier or {G}alerkin},
  booktitle = {Advances in Neural Information Processing Systems},
  year      = {2021},
}

@InProceedings{wang2025cvit,
  author    = {Wang, Sifan and Seidman, Jacob H. and Sankaran, Shyam and Wang, Hanwen and Pappas, George J. and Perdikaris, Paris},
  title     = {{CV}i{T}: {C}ontinuous {V}ision {T}ransformer for {O}perator {L}earning},
  booktitle = {The Thirteenth International Conference on Learning Representations},
  year      = {2025},
}

@InProceedings{pmlr-v235-hao24d,
  author    = {Hao, Zhongkai and Su, Chang and Liu, Songming and Berner, Julius and Ying, Chengyang and Su, Hang and Anandkumar, Anima and Song, Jian and Zhu, Jun},
  title     = {{DPOT}: Auto-Regressive Denoising Operator Transformer for Large-Scale {PDE} Pre-Training},
  booktitle = {Proceedings of the 41st International Conference on Machine Learning},
  series    = {Proceedings of Machine Learning Research},
  volume    = {235},
  publisher = {PMLR},
  year      = {2024},
  pages     = {17616--17635},
}

\newpage
\appendix

\section{Theoretical Analysis}

\subsection{SPD-property of the LOD-stiffness matrix}

\begin{AppendixLemma}
Let $D \subset \mathbb{R}^d$ be the physical domain and let $\Omega$ be the input domain of the neural network.
Assume that $A(x,\omega)\in\mathbb{R}^{d\times d}$ is symmetric and uniformly elliptic, i.e.,
\begin{equation*}
    \alpha |\xi|^2 \le \xi^T A(x,\omega)\xi \le \beta |\xi|^2, \quad
\forall \xi \in \mathbb{R}^d,
\end{equation*}
for a.e.\ $x\in D$ and a.e.\ $\omega\in\Omega$. For each fixed $\omega\in\Omega$, define the bilinear form
\begin{equation*}
a_\omega(u,v)
:=
\int_D (A(x,\omega)\nabla u)\cdot \nabla v\,dx,
\end{equation*}
and let $A_H(\omega)$ be the coarse-scale stiffness matrix associated with $a_\omega$. Let $V_{\mathrm{ms}}(\omega)$ be the matrix that represents the change of basis from the nodal basis functions of $V_H$ to the LOD-corrected basis, and define 
\begin{equation*}
A_{\mathrm{LOD}}(\omega)
:=
V_{\mathrm{ms}}(\omega)\,A_h(\omega)\,V_{\mathrm{ms}}(\omega)^T.
\end{equation*}
Then, for a.e.\ $\omega\in\Omega$, the matrix $A_{\mathrm{LOD}}(\omega)$ is symmetric positive definite.
\end{AppendixLemma}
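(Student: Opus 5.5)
The plan is to view $A_{\mathrm{LOD}}(\omega)$ as a Gram matrix of the symmetric bilinear form $a_\omega$ restricted to the multiscale space. First I fix the representation. Write $A_h(\omega)$ for the fine stiffness matrix, $(A_h)_{mn} = a_\omega(\phi_h^n,\phi_h^m)$, and let the $j$-th row of $V_{\mathrm{ms}}(\omega)\in\mathbb{R}^{N_H\times N_h}$ hold the fine nodal coefficients of $\Psi_H^j(\omega)=\Phi_H^j - Q_h^\ell(\Phi_H^j)\in V_h$. (Although the statement calls this a change of basis from $V_H$, this is its only meaning that makes the product dimensionally consistent with $A_h$.) Then bilinearity gives
\begin{equation*}
\bigl(V_{\mathrm{ms}} A_h V_{\mathrm{ms}}^T\bigr)_{ij} = a_\omega(\Psi_H^i,\Psi_H^j),
\end{equation*}
which agrees with the definition of $\boldsymbol{A}_{\mathrm{LOD}}$ in the main text.

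Symmetry is immediate. For a.e.\ $\omega$, $A(x,\omega)$ is symmetric for a.e.\ $x$, so $a_\omega(u,v)=a_\omega(v,u)$. Hence $A_h$ is symmetric, and so is $V_{\mathrm{ms}}A_hV_{\mathrm{ms}}^T$.

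For positive definiteness, take $\boldsymbol{c}\in\mathbb{R}^{N_H}\setminus\{0\}$ and set $v:=\sum_j c_j\Psi_H^j\in V_h\subset H_0^1(D)$. Then
\begin{equation*}
\boldsymbol{c}^T A_{\mathrm{LOD}}\boldsymbol{c} = a_\omega(v,v)\ge \alpha\|\nabla v\|_{L^2(D)}^2 .
\end{equation*}
By the Poincaré inequality on $H_0^1(D)$, the right-hand side vanishes only if $v=0$. So it remains to show that the $\Psi_H^j$ are linearly independent, i.e.\ that $v\neq 0$.

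This is the main point. I would apply $\mathcal{I}_H$. Each local corrector $Q_{T,h}^\ell(\Phi_H^j)$ lies in $W^\ell(T)\subset W=\ker\mathcal{I}_H$, so $Q_h^\ell(\Phi_H^j)\in\ker\mathcal{I}_H$ by linearity. Since $\mathcal{I}_H$ is a projection onto $V_H$, it fixes $V_H$. Therefore
\begin{equation*}
\mathcal{I}_H v = \sum_j c_j \mathcal{I}_H\Phi_H^j = \sum_j c_j\Phi_H^j .
\end{equation*}
This is nonzero because the coarse nodal basis is linearly independent. Hence $v\neq0$ and $\boldsymbol{c}^TA_{\mathrm{LOD}}\boldsymbol{c}>0$.

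One subtlety needs a check: $\mathcal{I}_H$ must fix every element of $V_H$, not just satisfy $\mathcal{I}_H\circ\mathcal{I}_H=\mathcal{I}_H$. This holds because $\mathcal{I}_H$ maps onto $V_H$, which is true for the $L^2$-projection examples given. Finally, all statements hold for every $\omega$ outside the null set where ellipticity or symmetry fails, which gives the conclusion for a.e.\ $\omega$.
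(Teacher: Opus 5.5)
Your proof is correct and follows the same route as the paper: coercivity of $a_\omega$ plus injectivity of the map from LOD coefficients to multiscale functions, giving $\boldsymbol{c}^T A_{\mathrm{LOD}}\boldsymbol{c} = a_\omega(v,v) > 0$ for $\boldsymbol{c}\neq 0$, with symmetry inherited from $A$. Your version is the more careful one on two points. The paper only asserts that $(1-\mathcal{C}^\ell)$ is bijective, and it routes the factorization through the coarse matrix $A_H$, which cannot be right since $\Psi_H^j\notin V_H$. Your reading, with $V_{\mathrm{ms}}\in\mathbb{R}^{N_H\times N_h}$ and the fine matrix $A_h$, together with your argument that $\mathcal{I}_H\circ(1-Q_h^\ell)=\mathrm{id}_{V_H}$, supplies exactly the justification the paper leaves out.
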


\begin{proof}
We first establish that the bilinear form $a_\omega$ is coercive on $V=H^1(D)$. Fix $\omega\in\Omega$ and let $u \in H^1_0(D)$. The Poincaré inequality gives:
\begin{equation*}
    {|| u ||}_{L^2(D)} \leq C_p {|| \nabla u ||}_{L^2(D)}
\end{equation*}
Therefore:
\begin{align*}
\|u\|_{H^1(D)}^2
&= \|u\|_{L^2(D)}^2 + \|\nabla u\|_{L^2(D)}^2 \\
&\le (C_p^2+1)\|\nabla u\|_{L^2(D)}^2.
\end{align*}
Using this and the uniform ellipticity of $A$ we obtain
\begin{align*}
a_\omega(u,u)
&= \int_D (A(x,\omega)\nabla u)\cdot \nabla u\,dx \\
&\ge \alpha\|\nabla u\|_{L^2(D)}^2
\ge \frac{\alpha}{C_p^2+1}\|u\|_{H^1(D)}^2.
\end{align*}
Next, the coarse-scale stiffness matrix $A_h(\omega)$ associated with $a_{\omega}$, defined by 
\begin{equation*}
   {(A_H(\omega))}_{ij} = a_{\omega}(\Phi_i, \Phi_j)
\end{equation*}
is positive-definite: Let $c\neq 0$ and write $v_H \in V_H$ as $v_h=\sum_i c_i \varphi_i$ in the nodal basis.
Then $v_H\neq 0$, and therefore
\begin{equation*}
    c^T A_h(\omega)c
=
a_\omega(v_h,v_h)
>
0
\end{equation*}
The LOD construction gives the multiscale space
\[
V_{\mathrm{ms}}(\omega)=(1-\mathcal C^{\ell}(\omega))V_H,
\]
and the map $(1-\mathcal C^{\ell}(\omega)):V_H\to V_{\mathrm{ms}}(\omega)$ is bijective. Therefore, the associated change-of-basis matrix $B_{\mathrm{ms}}(\omega)$ has full rank. Let $x\neq 0$ and set
\[
y := B_{\mathrm{ms}}(\omega)^T x.
\]
Since $B_{\mathrm{ms}}(\omega)$ has full rank, we have $y\neq 0$.
Thus
\[
x^T A_{\mathrm{LOD}}(\omega)x
=
x^T B_{\mathrm{ms}}(\omega)A_H(\omega)B_{\mathrm{ms}}(\omega)^T x
=
y^T A_H(\omega)y
>
0.
\]
Symmetry follows from the symmetry of $A_h(\omega)$.
\end{proof}

\subsection{Proof of Theorem~\ref{thm:energy_approx} - Approximation error for the energy-based loss function}

\noindent To prove Theorem~\ref{thm:energy_approx}, we shall use the following Lemma:

\begin{AppendixLemma}[Uniform norm equivalence]
Denote with $A_{\mathrm{LOD}}(\omega)$ the LOD-stiffness matrix and let
$\lambda_{\min}(\omega)$ and $\lambda_{\max}(\omega)$ denote its smallest and largest eigenvalues.
Then for every measurable $e:\Omega\to\mathbb{R}^{N_H}$ we have,
\begin{equation*}
    \sqrt{\lambda_{\min}(\omega)}\,|e(\omega)|
    \le
    |A_{\mathrm{LOD}}(\omega)^{1/2}e(\omega)|
    \le
    \sqrt{\lambda_{\max}(\omega)}\,|e(\omega)|
\end{equation*}
for a.e.\ $\omega\in\Omega$.
Consequently,
\[
\|A_{\mathrm{LOD}}^{1/2} e\|_{L^\infty(\Omega)}
\le
\|\sqrt{\lambda_{\max}}\|_{L^\infty(\Omega)}
\|e\|_{L^\infty(\Omega)},
\]
and
\[
\|e\|_{L^\infty(\Omega)}
\le
\left\|\frac{1}{\sqrt{\lambda_{\min}}}\right\|_{L^\infty(\Omega)}
\|A_{\mathrm{LOD}}^{1/2} e\|_{L^\infty(\Omega)}.
\]
\end{AppendixLemma}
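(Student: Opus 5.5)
The plan is to reduce everything to a pointwise (in $\omega$) spectral statement about the symmetric positive definite matrix $A_{\mathrm{LOD}}(\omega)$, and then pass to $L^\infty(\Omega)$ norms by taking essential suprema. By the preceding Lemma on the SPD-property, for a.e.\ $\omega\in\Omega$ the matrix $A_{\mathrm{LOD}}(\omega)$ is symmetric positive definite. Its eigenvalues are therefore real and positive, and it admits an orthonormal eigendecomposition $A_{\mathrm{LOD}}(\omega)=U(\omega)\Lambda(\omega)U(\omega)^T$. The square root is then well defined as $A_{\mathrm{LOD}}^{1/2}(\omega)=U(\omega)\Lambda(\omega)^{1/2}U(\omega)^T$, which is again SPD and has eigenvalues $\sqrt{\lambda_i(\omega)}$.

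For the pointwise inequality, fix such an $\omega$ and write $e=e(\omega)$. Then
\begin{equation*}
|A_{\mathrm{LOD}}^{1/2}e|^2 = e^T A_{\mathrm{LOD}} e = \sum_i \lambda_i (U^Te)_i^2 .
\end{equation*}
Since $U$ is orthogonal, $|U^Te|=|e|$, and bounding each $\lambda_i$ between $\lambda_{\min}(\omega)$ and $\lambda_{\max}(\omega)$ gives
\begin{equation*}
\lambda_{\min}(\omega)|e|^2\le |A_{\mathrm{LOD}}^{1/2}e|^2\le \lambda_{\max}(\omega)|e|^2 .
\end{equation*}
Taking square roots yields the first claim; this is just the Rayleigh quotient bound. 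The only subtlety is measurability. I would note that $\omega\mapsto A_{\mathrm{LOD}}(\omega)$ is measurable, and that the eigenvalue maps and the matrix square root are continuous functions of the matrix entries. Hence $\lambda_{\min}$, $\lambda_{\max}$ and $A_{\mathrm{LOD}}^{1/2}e$ are measurable, so the norms appearing in the statement make sense.

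For the upper $L^\infty$ bound, I would estimate pointwise
\begin{equation*}
|A_{\mathrm{LOD}}^{1/2}(\omega)e(\omega)|\le \sqrt{\lambda_{\max}(\omega)}\,|e(\omega)|\le \|\sqrt{\lambda_{\max}}\|_{L^\infty(\Omega)}\|e\|_{L^\infty(\Omega)}
\end{equation*}
for a.e.\ $\omega$, and then take the essential supremum on the left. For the lower bound, I would divide the left inequality by $\sqrt{\lambda_{\min}(\omega)}>0$ to get
\begin{equation*}
|e(\omega)|\le \lambda_{\min}(\omega)^{-1/2}|A_{\mathrm{LOD}}^{1/2}(\omega)e(\omega)|\le \|\lambda_{\min}^{-1/2}\|_{L^\infty(\Omega)}\,\|A_{\mathrm{LOD}}^{1/2}e\|_{L^\infty(\Omega)}
\end{equation*}
for a.e.\ $\omega$, and again take the essential supremum.

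There is no real obstacle here. The main care point is that the union of the exceptional null sets (where SPD fails, or where the pointwise bounds fail) is still null. I would also remark that if $\|\sqrt{\lambda_{\max}}\|_{L^\infty(\Omega)}$ or $\|\lambda_{\min}^{-1/2}\|_{L^\infty(\Omega)}$ is infinite, the corresponding inequality holds trivially. The same pointwise argument, integrated rather than maximized, gives the $L^1(\Omega)$ analogues used in Theorem~\ref{thm:energy_approx}.
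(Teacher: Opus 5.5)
Your proposal is correct and follows the paper's proof essentially verbatim: the paper likewise uses the SPD property to obtain an orthonormal eigenbasis, derives $\lambda_{\min}(\omega)|z|^2 \le z^T A_{\mathrm{LOD}}(\omega) z = |A_{\mathrm{LOD}}(\omega)^{1/2}z|^2 \le \lambda_{\max}(\omega)|z|^2$, sets $z=e(\omega)$, takes square roots, and then takes essential suprema over $\omega$. Your extra remarks on measurability, on the union of exceptional null sets, and on the case of infinite constants make explicit what the paper leaves implicit, without changing the argument.
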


\begin{proof}
Since $A_{\mathrm{LOD}}(\omega)$ is symmetric positive definite, it admits an orthonormal eigenbasis.
Hence, for any $z\in\mathbb{R}^{N_H}$,
\[
\lambda_{\min}(\omega)|z|^2
\le
z^T A_{\mathrm{LOD}}(\omega) z
=
|A_{\mathrm{LOD}}(\omega)^{1/2}z|^2
\le
\lambda_{\max}(\omega)|z|^2.
\]
Taking $z=e(\omega)$ and then square roots yields
\[
\sqrt{\lambda_{\min}(\omega)}\,|e(\omega)|
\le
|A_{\mathrm{LOD}}(\omega)^{1/2}e(\omega)|
\le
\sqrt{\lambda_{\max}(\omega)}\,|e(\omega)|.
\]
Taking essential suprema over $\omega\in\Omega$ gives the two $L^\infty$ estimates.
\end{proof}

\begin{AppendixTheorem}[Approximation error for the energy-based loss]
\label{thm:energy_approx_app}

Denote by $\lambda_{\min}(\omega)$ and $\lambda_{\max}(\omega)$ the smallest and biggest eigenvalues of $\boldsymbol{A}_{\mathrm{LOD}}$ respectively. Assume that there exists a positive constant $c_0 > 0$ such that
\begin{equation*}
\operatorname*{ess\,inf}_{\omega\in\Omega}\lambda_{\min}(\omega)\ge c_0>0.
\end{equation*}
Denote by $\boldsymbol{u}_{\mathrm{LOD}} \colon \Omega \longrightarrow \mathbb{R}^{N_H}$ the true coefficient map of the LOD approximation, i.e.\ the solution of the linear system~\eqref{eq:lod_linear_eq} for a given $\omega \in \Omega$
%, and use the population energy loss $\mathcal{L}_{E}$ from~\eqref{eq:energy_loss_population}.
Let $\mathcal N$ be any neural network class, and define
\[
{\boldsymbol{u}}_{\mathrm{LOD}}^E
\in
\operatorname*{argmin}_{\boldsymbol{u}\in\mathcal N}\mathcal L_E(\boldsymbol{u}).
\]
Then
\[
\|{\boldsymbol{u}}_{\mathrm{LOD}}-{\boldsymbol{u}}_{\mathrm{LOD}}^E\|_{L^1(\Omega)}
\le
c_0^{-1/2}
\inf_{\boldsymbol{u}\in\mathcal N}
\|A_{\mathrm{LOD}}^{1/2}(\boldsymbol{u}-\boldsymbol{u}_{\mathrm{LOD}})\|_{L^1(\Omega)}.
\]
If, in addition,
\[
\|\sqrt{\lambda_{\max}}\|_{L^\infty(\Omega)} := c_1 <\infty,
\]
for some $c_1>0$, then
\[
\|{\boldsymbol{u}}_{\mathrm{LOD}}-{\boldsymbol{u}}_{\mathrm{LOD}}^E\|_{L^1(\Omega)}
\le
\frac{c_1}{\sqrt{c_0}}
\inf_{\boldsymbol{u}\in\mathcal N}
\|(\boldsymbol{u}-\boldsymbol{u}_{\mathrm{LOD}})\|_{L^1(\Omega)}
\]
\end{AppendixTheorem}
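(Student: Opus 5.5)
The proof combines the pointwise norm equivalence from the preceding Uniform norm equivalence lemma with the minimizing property of $\boldsymbol{u}_{\mathrm{LOD}}^E$.

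\emph{Step 1: lower eigenvalue bound.} Set $e(\omega) := \boldsymbol{u}_{\mathrm{LOD}}(\omega)-\boldsymbol{u}_{\mathrm{LOD}}^E(\omega)$. The lemma gives, for a.e.\ $\omega$,
\[
|e(\omega)| \le \lambda_{\min}(\omega)^{-1/2}\,|\boldsymbol{A}_{\mathrm{LOD}}(\omega)^{1/2}e(\omega)|.
\]
By assumption $\lambda_{\min}(\omega)^{-1/2}\le c_0^{-1/2}$ almost everywhere. Integrating over $\Omega$ therefore yields
\[
\|e\|_{L^1(\Omega)} \le c_0^{-1/2}\,\mathcal{L}_E(\boldsymbol{u}_{\mathrm{LOD}}^E).
\]

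\emph{Step 2: minimizing property.} Since $\boldsymbol{u}_{\mathrm{LOD}}^E$ is a minimizer of $\mathcal{L}_E$ over $\mathcal N$, we have $\mathcal{L}_E(\boldsymbol{u}_{\mathrm{LOD}}^E) = \inf_{\boldsymbol{u}\in\mathcal N}\mathcal{L}_E(\boldsymbol{u})$. By the definition \eqref{eq:energy_loss_population} this infimum equals $\inf_{\boldsymbol{u}\in\mathcal N}\|\boldsymbol{A}_{\mathrm{LOD}}^{1/2}(\boldsymbol{u}-\boldsymbol{u}_{\mathrm{LOD}})\|_{L^1(\Omega)}$, where the sign inside the norm does not matter. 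Combined with Step 1, this is the first claim.

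\emph{Step 3: upper eigenvalue bound.} For any fixed $\boldsymbol{u}\in\mathcal N$, the upper half of the lemma gives pointwise
\[
|\boldsymbol{A}_{\mathrm{LOD}}^{1/2}(\boldsymbol{u}-\boldsymbol{u}_{\mathrm{LOD}})| \le \sqrt{\lambda_{\max}}\,|\boldsymbol{u}-\boldsymbol{u}_{\mathrm{LOD}}| \le c_1\,|\boldsymbol{u}-\boldsymbol{u}_{\mathrm{LOD}}|
\]
almost everywhere. Integrating, this is a Hölder bound with $L^\infty\times L^1$. It gives $\|\boldsymbol{A}_{\mathrm{LOD}}^{1/2}(\boldsymbol{u}-\boldsymbol{u}_{\mathrm{LOD}})\|_{L^1(\Omega)}\le c_1\|\boldsymbol{u}-\boldsymbol{u}_{\mathrm{LOD}}\|_{L^1(\Omega)}$. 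Taking the infimum over $\mathcal N$ on both sides and inserting into the first claim gives the second claim.

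\emph{Main obstacle.} The argument is essentially bookkeeping; the only delicate points are measure-theoretic. First, the minimizer $\boldsymbol{u}_{\mathrm{LOD}}^E$ must exist, which the statement presupposes. Second, $\omega\mapsto \boldsymbol{A}_{\mathrm{LOD}}(\omega)^{1/2}e(\omega)$ must be measurable and integrable, so that $\mathcal L_E$ is finite for members of $\mathcal N$; I would assume this implicitly. Third, the lemma's inequalities hold only almost everywhere, which is all that integration requires. I would also invoke the SPD lemma so that $\boldsymbol{A}_{\mathrm{LOD}}^{1/2}$ is well defined. No further ingredients should be needed.
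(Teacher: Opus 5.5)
Your proof is correct and follows the paper's argument step for step. It uses the lower spectral bound from the norm-equivalence lemma integrated over $\Omega$, the minimizing property of $\boldsymbol{u}_{\mathrm{LOD}}^E$, and then the upper spectral bound applied to each $\boldsymbol{u}\in\mathcal N$ before passing to the infimum. Your Step~3 states this last step more cleanly than the paper, which phrases it as applying the bound to $\boldsymbol{u}_{\mathrm{LOD}}-\boldsymbol{u}_{\mathrm{LOD}}^E$, although what is actually needed is the bound for arbitrary $\boldsymbol{u}\in\mathcal N$.
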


\begin{proof}
Set
\[
e(\omega):={\boldsymbol{u}}_{\mathrm{LOD}} (\omega)-{\boldsymbol{u}}_{\mathrm{LOD}}^E(\omega).
\]
By the uniform norm equivalence lemma,
\[
|e(\omega)|
\le
\frac{|A^{1/2}_{\mathrm{LOD}}(\omega)e(\omega)|}{\sqrt{\lambda_{\min}(\omega)}}
\]
for a.e.\ $\omega\in\Omega$.
Integrating over $\Omega$ gives
\begin{equation*}
    \|{\boldsymbol{u}}_{\mathrm{LOD}}-{\boldsymbol{u}}_{\mathrm{LOD}}^E\|_{L^1(\Omega)}
\le
\left\|\frac{1}{\sqrt{\lambda_{\min}}}\right\|_{L^\infty(\Omega)}
\|A_{\mathrm{LOD}}^{1/2}({\boldsymbol{u}}_{\mathrm{LOD}}-\boldsymbol{u}_{\mathrm{LOD}}^E)\|_{L^1(\Omega)}.
\end{equation*}
Since ${\boldsymbol{u}}_{\mathrm{LOD}}^E$ minimizes $\mathcal L_E$ over $\mathcal N$,
\[
\|A_{\mathrm{LOD}}^{1/2}(\boldsymbol{u}-\boldsymbol{u}_{\mathrm{LOD}})\|_{L^1(\Omega)}
= \inf_{\boldsymbol{u} \in\mathcal N}\mathcal L_E(\boldsymbol{u}).
\]
Combining the two estimates yields
\[
\|{\boldsymbol{u}}_{\mathrm{LOD}}-{\boldsymbol{u}}_{\mathrm{LOD}}^E\|_{L^1(\Omega)}
\le
\left\|\frac{1}{\sqrt{\lambda_{\min}}}\right\|_{L^\infty(\Omega)}
\inf_{\boldsymbol{u}\in\mathcal N}
\|A_{\mathrm{LOD}}^{1/2}(\boldsymbol{u}-\boldsymbol{u}_{\mathrm{LOD}})\|_{L^1(\Omega)}.
\]
The bound with $c_0^{-1/2}$ follows from
\[
\left\|\frac{1}{\sqrt{\lambda_{\min}}}\right\|_{L^\infty(\Omega)}
\le
c_0^{-1/2}.
\]
For the final estimate, use the upper spectral bound
\[
|A_{\mathrm{LOD}}(\omega)^{1/2}v|
\le
\sqrt{\lambda_{\max}(\omega)}\,|v|
\]
with $v={\boldsymbol{u}}_{\mathrm{LOD}} (\omega)-{\boldsymbol{u}}_{\mathrm{LOD}}^E(\omega)$ to get
\[
\|A_{\mathrm{LOD}}^{1/2}(\boldsymbol{u}-\boldsymbol{u}_{\mathrm{LOD}})\|_{L^1(\Omega)}
\le
\|\sqrt{\lambda_{\max}}\|_{L^\infty(\Omega)}
\|(\boldsymbol{u}-\boldsymbol{u}_{\mathrm{LOD}})\|_{L^1(\Omega)}.
\]
Substituting this into the previous bound gives
\[
\|{\boldsymbol{u}}_{\mathrm{LOD}}-{\boldsymbol{u}}_{\mathrm{LOD}}^E\|_{L^1(\Omega)}
\le
\left\|\frac{1}{\sqrt{\lambda_{\min}}}\right\|_{L^\infty(\Omega)}
\|\sqrt{\lambda_{\max}}\|_{L^\infty(\Omega)}
\inf_{\boldsymbol{u}\in\mathcal N}
\|(\boldsymbol{u}-\boldsymbol{u}_{\mathrm{LOD}})\|_{L^1(\Omega)}.
\]
\end{proof}

\subsection{Proof of Theorem~\ref{thm:energy_gen} - Generalization error for the energy-based loss function}

\begin{AppendixTheorem}[Generalization error for the energy-based loss]
\label{thm:energy_gen_app}
Assume that
\[
\operatorname*{ess\,inf}_{\omega\in\Omega}\lambda_{\min}(\omega)\ge c_0>0,
\qquad
\operatorname*{ess\,sup}_{\omega\in\Omega}\lambda_{\max}(\omega)\leq c_1 <\infty.
\]
Let $\omega_1,\dots,\omega_M$ be i.i.d.\ samples in $\Omega$.
Define the empirical energy loss as the Monte-Carlo approximation of the energy loss
\[
\mathcal L_{E,M}(\boldsymbol{u})
:=
\frac{|\Omega|}{M}\sum_{i=1}^M
\left|
\boldsymbol{A}_{\mathrm{LOD}}(\omega_i)^{1/2}
\bigl({\boldsymbol{u}}_{\mathrm{LOD}} (\omega_i)-\boldsymbol{u}(\omega_i)\bigr)
\right|.
\]
and let
\[
\widehat{\boldsymbol{u}}^{E,M}_{\mathrm{LOD}}
\in
\operatorname*{argmin}_{\boldsymbol{u}\in\mathcal N}\mathcal L_{E,M}(\boldsymbol{u})
\]
be a minimizer of the empirical energy loss function. Define the function class associated with the Neural Network class $\mathcal{N}$ and the energy loss
\[
\mathcal G^{E}
:=
\Bigl\{
\omega\mapsto
\left|
A^{1/2}_{\mathrm{LOD}}(\omega)
\bigl({\boldsymbol{u}}_{\mathrm{LOD}} (\omega)-\boldsymbol{u}(\omega)\bigr)
\right|
:\ \boldsymbol{u} \in \mathcal N
\Bigr\}.
\]
Then
\begin{equation*}
\begin{aligned}
&\mathbb E\!\left[
\left\|
\widehat{\boldsymbol{u}}^{E}_{\mathrm{LOD}}
-\widehat{\boldsymbol{u}}^{E,M}_{\mathrm{LOD}}
\right\|_{L^1(\Omega)}
\right] \\
&\lesssim
\left\|
\frac{1}{\sqrt{\lambda_{\min}}}
\right\|_{L^\infty(\Omega)}
R_M(\mathcal G^{E}) \\
&\quad+
\left\|\frac{1}{\sqrt{\lambda_{\min}}}\right\|_{L^\infty(\Omega)}
\|\sqrt{\lambda_{\max}}\|_{L^\infty(\Omega)}
\inf_{\boldsymbol{u}\in\mathcal N}
\|(\boldsymbol{u}-\boldsymbol{u}_{\mathrm{LOD}})\|_{L^1(\Omega)}.
\end{aligned}
\end{equation*}
Here, $R_M(\mathcal G^{E})$ denotes the empirical Rademacher complexity of the function class $\mathcal{G}^{E}$ and the expectation is taken with respect to the random training samples
$\omega_1,\dots,\omega_M$.
\end{AppendixTheorem}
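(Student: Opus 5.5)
We bound the coefficient discrepancy $e(\omega):=\widehat{\boldsymbol{u}}^{E}_{\mathrm{LOD}}(\omega)-\widehat{\boldsymbol{u}}^{E,M}_{\mathrm{LOD}}(\omega)$ by passing to the energy norm, splitting it with the triangle inequality through $\boldsymbol{u}_{\mathrm{LOD}}$, and then using the classical excess-risk argument for the empirical minimizer.

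\textbf{Step 1: pass to the energy norm.} By the uniform norm equivalence lemma,
\[
|e(\omega)|\le \lambda_{\min}(\omega)^{-1/2}\,|A_{\mathrm{LOD}}^{1/2}(\omega)e(\omega)|
\quad\text{for a.e. }\omega .
\]
Inserting $\boldsymbol{u}_{\mathrm{LOD}}$ and using the triangle inequality, then integrating over $\Omega$, gives
\[
\|e\|_{L^1(\Omega)}\le \Bigl\|\tfrac{1}{\sqrt{\lambda_{\min}}}\Bigr\|_{L^\infty(\Omega)}\Bigl(\mathcal L_E(\widehat{\boldsymbol{u}}^{E}_{\mathrm{LOD}})+\mathcal L_E(\widehat{\boldsymbol{u}}^{E,M}_{\mathrm{LOD}})\Bigr).
\]
Here $\mathcal L_E(\boldsymbol{u})=\int_\Omega g_{\boldsymbol{u}}\,d\mu$, where $g_{\boldsymbol{u}}\in\mathcal G^E$ is the function generated by $\boldsymbol{u}$.

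\textbf{Step 2: the population term.} Since $\widehat{\boldsymbol{u}}^{E}_{\mathrm{LOD}}$ minimizes $\mathcal L_E$, the first term equals $\inf_{\mathcal N}\mathcal L_E$. The upper spectral bound then gives
\[
\inf_{\mathcal N}\mathcal L_E\le \|\sqrt{\lambda_{\max}}\|_{L^\infty(\Omega)}\inf_{\boldsymbol{u}\in\mathcal N}\|\boldsymbol{u}-\boldsymbol{u}_{\mathrm{LOD}}\|_{L^1(\Omega)},
\]
which is the same computation as in the proof of Theorem~\ref{thm:energy_approx}.

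\textbf{Step 3: the empirical minimizer.} For the second term we use the standard decomposition
\begin{align*}
\mathcal L_E(\widehat{\boldsymbol{u}}^{E,M})
&=\bigl[\mathcal L_E(\widehat{\boldsymbol{u}}^{E,M})-\mathcal L_{E,M}(\widehat{\boldsymbol{u}}^{E,M})\bigr]
+\bigl[\mathcal L_{E,M}(\widehat{\boldsymbol{u}}^{E,M})-\mathcal L_{E,M}(\widehat{\boldsymbol{u}}^{E})\bigr]\\
&\quad+\bigl[\mathcal L_{E,M}(\widehat{\boldsymbol{u}}^{E})-\mathcal L_E(\widehat{\boldsymbol{u}}^{E})\bigr]
+\mathcal L_E(\widehat{\boldsymbol{u}}^{E}).
\end{align*}
\begin{itemize}
\item The second bracket is $\le 0$, because $\widehat{\boldsymbol{u}}^{E,M}$ minimizes $\mathcal L_{E,M}$.
\item The third bracket has zero expectation, since $\widehat{\boldsymbol{u}}^{E}$ is deterministic and $\mathcal L_{E,M}$ is an unbiased Monte Carlo estimator. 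The normalization $|\Omega|/M$, i.e.\ the measure normalization of $\mu$, must be tracked consistently here.
\item The last term is again bounded as in Step 2.
\end{itemize}
Taking expectations, it remains to control $\mathbb E\sup_{g\in\mathcal G^E}\bigl(\int g\,d\mu-\frac{|\Omega|}{M}\sum_i g(\omega_i)\bigr)$. Standard symmetrization with a ghost sample and Rademacher signs bounds this by $2R_M(\mathcal G^E)$, up to the constant $|\Omega|$ absorbed in $\lesssim$.

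\textbf{Step 4: collect terms.} Combining Steps 1--3 yields the claimed bound, with constant $\|\lambda_{\min}^{-1/2}\|_{L^\infty(\Omega)}$ multiplying both the Rademacher term and the approximation term. The factors of $2$ and $3$ are absorbed into $\lesssim$.

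\textbf{Main obstacle.} The symmetrization step needs the supremum over $\mathcal G^E$ to be measurable and the functions in $\mathcal G^E$ to be integrable. This requires $\mathcal N$ to consist of bounded functions on the compact $\Omega$, which together with $\lambda_{\max}\le c_1$ makes $\mathcal G^E$ uniformly bounded. It also requires reading $R_M$ as the expected Rademacher complexity, or taking an outer expectation of the empirical one. I would state these as implicit standing assumptions and otherwise cite the standard symmetrization lemma.
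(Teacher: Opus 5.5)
Your proposal is correct and follows the paper's proof essentially step for step. The common chain is: the lower spectral bound plus the triangle inequality through $\boldsymbol{u}_{\mathrm{LOD}}$; the standard decomposition for the empirical risk minimizer; a Rademacher bound on the deviation $\mathcal L_E-\mathcal L_{E,M}$; and the upper spectral bound for $\inf_{\mathcal N}\mathcal L_E$. The differences are minor refinements: you bound $\mathcal L_E(\widehat{\boldsymbol{u}}^{E}_{\mathrm{LOD}})$ directly by $\inf_{\mathcal N}\mathcal L_E$ and use the zero mean of the deviation at the deterministic population minimizer, so you only need a one-sided supremum, whereas the paper bounds everything by $2\sup_{\mathcal N}|\mathcal L_E-\mathcal L_{E,M}|$; your stated standing assumptions (boundedness and measurability of $\mathcal G^E$, $R_M$ read as the expected Rademacher complexity, consistent sampling normalization) make explicit what the paper leaves implicit.
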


\begin{proof}
Set
\[
C_{\min}
:=
\left\|
\frac{1}{\sqrt{\lambda_{\min}}}
\right\|_{L^\infty(\Omega)}.
\]
By the lower spectral bound,
\[
|v(\omega)|
\le
\frac{|A^{1/2}_{\mathrm{LOD}}(\omega)v(\omega)|}{\sqrt{\lambda_{\min}(\omega)}}
\]
for a.e.\ $\omega\in\Omega$. Hence

\begin{align*}
&\mathbb E\!\left[
\left\|
\widehat{\boldsymbol{u}}^{E}_{\mathrm{LOD}}-\widehat{\boldsymbol{u}}^{E,M}_{\mathrm{LOD}}
\right\|_{L^1(\Omega)}
\right] \\
&\le
C_{\min}\,
\mathbb E\!\left[
\left\|
A_{\mathrm{LOD}}^{1/2}
\bigl(
\widehat{\boldsymbol{u}}^{E}_{\mathrm{LOD}}-\widehat{\boldsymbol{u}}^{E,M}_{\mathrm{LOD}}
\bigr)
\right\|_{L^1(\Omega)}
\right] \\
&\le
C_{\min}\,\mathbb E\!\left[
\left\|
A_{\mathrm{LOD}}^{1/2}
(\widehat{\boldsymbol{u}}^{E}_{\mathrm{LOD}}-{\boldsymbol{u}}_{\mathrm{LOD}})
\right\|_{L^1(\Omega)}
\right] \\
&\quad+
C_{\min}\,\mathbb E\!\left[
\left\|
A_{\mathrm{LOD}}^{1/2}
(\widehat{\boldsymbol{u}}^{E,M}_{\mathrm{LOD}}-{\boldsymbol{u}}_{\mathrm{LOD}})
\right\|_{L^1(\Omega)}
\right] \\
&=
C_{\min}\,\mathbb E\!\left[
\mathcal L_E(\widehat{\boldsymbol{u}}^{E}_{\mathrm{LOD}})
+
\mathcal L_E(\widehat{\boldsymbol{u}}^{E,M}_{\mathrm{LOD}})
\right].
\end{align*}
Since $\widehat{\boldsymbol{u}}^{E}_{\mathrm{LOD}}$ minimizes the population loss,
\[
\mathcal L_E(\widehat{\boldsymbol{u}}^{E}_{\mathrm{LOD}})
\le
\mathcal L_E(\widehat{\boldsymbol{u}}^{E,M}_{\mathrm{LOD}}),
\]
and therefore
\[
\mathbb E\!\left[
\left\|
\widehat{\boldsymbol{u}}^{E}_{\mathrm{LOD}}-\widehat{\boldsymbol{u}}^{E,M}_{\mathrm{LOD}}
\right\|_{L^1(\Omega)}
\right]
\le
2C_{\min}\,
\mathbb E\!\left[
\mathcal L_E(\widehat{\boldsymbol{u}}^{E,M}_{\mathrm{LOD}})
\right].
\]
Now,
\begingroup\footnotesize
\begin{align*}
\mathcal L_E(\widehat{\boldsymbol{u}}^{E,M}_{\mathrm{LOD}})
&=
\Bigl(
\mathcal L_{E}(\widehat{\boldsymbol{u}}^{E,M}_{\mathrm{LOD}})
-
\mathcal L_{E,M}(\widehat{\boldsymbol{u}}^{E,M}_{\mathrm{LOD}})
\Bigr)
+
\mathcal L_{E,M}(\widehat{\boldsymbol{u}}^{E,M}_{\mathrm{LOD}}) \\
&\le
\sup_{\boldsymbol{u} \in \mathcal N}
\bigl|
\mathcal L_E(\boldsymbol{u})-\mathcal L_{E,M}(\boldsymbol{u})
\bigr|
+
\mathcal L_{E,M}(\widehat{\boldsymbol{u}}^{E,M}_{\mathrm{LOD}}) \\
&\le
\sup_{\boldsymbol{u} \in \mathcal N}
\bigl|
\mathcal L_E(\boldsymbol{u})-\mathcal L_{E,M}(\boldsymbol{u})
\bigr|
+
\mathcal L_{E,M}(\widehat{\boldsymbol{u}}^{E}_{\mathrm{LOD}}) \\
&=
\sup_{\boldsymbol{u} \in \mathcal N}
\bigl|
\mathcal L_E(\boldsymbol{u})-\mathcal L_{E,M}(\boldsymbol{u})
\bigr|
+
\Bigl(
\mathcal L_{E,M}(\widehat{\boldsymbol{u}}^{E}_{\mathrm{LOD}})
-
\mathcal L_E(\widehat{\boldsymbol{u}}^{E}_{\mathrm{LOD}})
\Bigr)
+
\mathcal L_E(\widehat{\boldsymbol{u}}^{E}_{\mathrm{LOD}}) \\
&\le
2\sup_{\boldsymbol{u} \in \mathcal N}
\bigl|
\mathcal L_E(\boldsymbol{u})-\mathcal L_{E,M}(\boldsymbol{u})
\bigr| \\
&\quad+
\inf_{\boldsymbol{u} \in \mathcal N}\mathcal L_E(\boldsymbol{u}).
\end{align*}
\endgroup
Taking expectation and using the fact that the Rademacher complexity serves as a measure between the error of the population risk minimizer and the empirical risk minimizer, we obtain:
\[
\mathbb E\!\left[
\sup_{\boldsymbol{u} \in \mathcal N}
\bigl|
\mathcal L_E(\boldsymbol{u})-\mathcal L_{E,M}(\boldsymbol{u})
\bigr|
\right]
\lesssim
R_M(\mathcal G^{E}),
\]
we obtain
\begin{equation*}
\
\mathbb E\!\left[
\mathcal L_E(\widehat{\boldsymbol{u}}^{E,M}_{\mathrm{LOD}})
\right] \lesssim R_M(\mathcal G^{E}) +
\inf_{\boldsymbol{u} \in \mathcal N}\mathcal L_E(\boldsymbol{u}).
\end{equation*}
Substituting this into the previous estimate yields
\[
\mathbb E\!\left[
\left\|
\widehat{\boldsymbol{u}}^{E}_{\mathrm{LOD}}-\widehat{\boldsymbol{u}}^{E,M}_{\mathrm{LOD}}
\right\|_{L^1(\Omega)}
\right]
\lesssim
C_{\min}
\left( R_M(\mathcal G^{E})
+
\inf_{\boldsymbol{u} \in \mathcal N}\mathcal L_E(\boldsymbol{u})
\right)
.
\]
Finally, by the upper spectral bound,
\[
\mathcal L_E(\boldsymbol{u})
=
\|A_{\mathrm{LOD}}^{1/2}(\boldsymbol{u}-\boldsymbol{u}_{\mathrm{LOD}})\|_{L^1(\Omega)}
\le
\|\sqrt{\lambda_{\max}}\|_{L^\infty(\Omega)}
\|(\boldsymbol{u}-\boldsymbol{u}_{\mathrm{LOD}})\|_{L^1(\Omega)}
\]
Therefore,
\[
\inf_{\boldsymbol{u} \in \mathcal N} \mathcal L_E(\boldsymbol{u})
\le
\|\sqrt{\lambda_{\max}}\|_{L^\infty(\Omega)}
\inf_{\boldsymbol{u} \in \mathcal N}
\|(\boldsymbol{u}-\boldsymbol{u}_{\mathrm{LOD}})\|_{L^1(\Omega)}.
\]
Combining the above bounds gives the claimed conclusion,
\begin{align*}
&\mathbb E\!\left[
\left\|
\widehat{\boldsymbol{u}}^{E}_{\mathrm{LOD}}
-\widehat{\boldsymbol{u}}^{E,M}_{\mathrm{LOD}}
\right\|_{L^1(\Omega)}
\right] \\
&\lesssim
\left\|
\frac{1}{\sqrt{\lambda_{\min}}}
\right\|_{L^\infty(\Omega)}
R_M(\mathcal G^{E}) \\
&\quad+
\left\|\frac{1}{\sqrt{\lambda_{\min}}}\right\|_{L^\infty(\Omega)}
\|\sqrt{\lambda_{\max}}\|_{L^\infty(\Omega)}
\inf_{\boldsymbol{u}\in\mathcal N}
\|(\boldsymbol{u}-\boldsymbol{u}_{\mathrm{LOD}})\|_{L^1(\Omega)}.
\qedhere
\end{align*}
\end{proof}

\section{Baseline models}

\subsection{Common input--output format and training losses}
\label{app:baseline_common}

All baselines learn the solution operator for (\ref{darcy_eq}) on a regular $s\times s$ grid. The input is a coefficient field (and optional engineered channels), and the output is the scalar solution field.
Concretely, the network input is a tensor in $\mathbb{R}^{B\times s\times s\times C_{\mathrm{in}}}$ (or equivalently $B\times C_{\mathrm{in}}\times s\times s$),
and the output prediction is $u_\theta \in \mathbb{R}^{B\times s\times s}$.
Here, $C_{\mathrm{in}}$ is the number of input channels used in the experiment.
All baselines except FEONet are trained on fine-mesh finite element reference solutions $u_h$ defined by~\eqref{eq:fine_fem}, i.e $s=\frac{1}{h}$. FEONet is treated separately because it is trained through its finite-element weak-form residual rather than by direct supervision on these fine-mesh solution labels.

% For supervised training, we use the standard data loss
% $\mathcal{L}_{\text{data}}(\theta)=\|u_\theta-u\|_2^2$.
% For physics-informed training of PI-DeepONet, we additionally enforce the Darcy equation $-\nabla \cdot \big(\kappa(x)\nabla u(x)\big)=f(x)$ via a finite-volume/finite-difference style residual on the interior grid points. Let $h = 1/(s-1)$ and let $u_{ij}$ and $a_{ij}$ be the solution and coefficient values on the grid. We define arithmetic averages on half-indices
% \begin{equation}
% a_{i-\frac12,j}=\tfrac12(a_{ij}+a_{i-1,j}),\quad
% a_{i+\frac12,j}=\tfrac12(a_{ij}+a_{i+1,j}),\quad
% a_{i,j-\frac12}=\tfrac12(a_{ij}+a_{i,j-1}),\quad
% a_{i,j+\frac12}=\tfrac12(a_{ij}+a_{i,j+1}),
% \end{equation}
% and the interior residual (matching our implementation) as
% \begin{equation}
% r_{ij}
% =
% \big(a_{i-\frac12,j}+a_{i+\frac12,j}+a_{i,j-\frac12}+a_{i,j+\frac12}\big)u_{ij}
% -a_{i-\frac12,j}u_{i-1,j}-a_{i+\frac12,j}u_{i+1,j}
% -a_{i,j-\frac12}u_{i,j-1}-a_{i,j+\frac12}u_{i,j+1}
% -f h^2.
% \end{equation}

% Our PI objective is $\mathcal{L}_{\text{PI}}(\theta) = \mathcal{L}_{\text{data}}(\theta) + \lambda \mathcal{L}_{\text{pde}}(\theta)$ where $\mathcal{L}_{\text{pde}}(\theta) := \mathbb{E}\left[ \|r\|_2^2 \right]$

For supervised training, we use the standard data loss
\[
\mathcal{L}_{\mathrm{data}}(\theta)=\|u_\theta-u\|_2^2.
\]
For physics-informed training of PI-DeepONet and PINO, we additionally enforce the Darcy equation
\[
-\nabla \cdot \big(\kappa(x)\nabla u(x)\big)=f(x)
\]
through coordinate-based automatic differentiation rather than a finite-volume/finite-difference stencil. Let
\[
\mathcal{G}=\{x_{ij}\}_{i,j=1}^{s}
\]
denote the regular grid points, and let $u_\theta(x;\kappa)$ be the network prediction evaluated at coordinate $x\in\Omega$. We define the pointwise residual by
\[
r_\theta(x;\kappa)
=
-\partial_{x_1}\!\left(\kappa(x)\,\partial_{x_1}u_\theta(x;\kappa)\right)
-\partial_{x_2}\!\left(\kappa(x)\,\partial_{x_2}u_\theta(x;\kappa)\right)
-f(x).
\]
In the implementation, the spatial coordinates are treated as differentiable inputs, $\nabla u_\theta$ is obtained by automatic differentiation with respect to those coordinates, and $\kappa(x)$ is evaluated from the raw coefficient field at the same coordinates by differentiable interpolation. The physics-informed loss functions are then
\[
\mathcal{L}_{\mathrm{pde}}(\theta)
=
\frac{1}{|\mathcal{G}|}\sum_{x\in\mathcal{G}} |r_\theta(x;\kappa)|^2,
\qquad
\mathcal{L}_{\mathrm{PI}}(\theta)
=
\mathcal{L}_{\mathrm{data}}(\theta)+\lambda \mathcal{L}_{\mathrm{pde}}(\theta).
\]

% ----------------------------
% Hyperparameter macros (edit numbers here once)
% ----------------------------
\newcommand{\FNOlayers}{\textcolor{black}{4}}
\newcommand{\FNOwidth}{\textcolor{black}{64}}
\newcommand{\FNOmodes}{\textcolor{black}{12}}

\newcommand{\CNOlevels}{\textcolor{black}{4}}
\newcommand{\CNOchannels}{\textcolor{black}{64}}
\newcommand{\CNOrblocks}{\textcolor{black}{2}}

\newcommand{\UNOdepth}{\textcolor{black}{4}}
\newcommand{\UNOwidth}{\textcolor{black}{64}}

\newcommand{\DONp}{\textcolor{black}{128}}
\newcommand{\DONbranchDepth}{\textcolor{black}{4}}
\newcommand{\DONtrunkDepth}{\textcolor{black}{4}}
\newcommand{\DONwidth}{\textcolor{black}{128}}

\newcommand{\TSdim}{\textcolor{black}{128}}
\newcommand{\TSslices}{\textcolor{black}{64}}
\newcommand{\TSheads}{\textcolor{black}{8}}
\newcommand{\TStau}{\textcolor{black}{0.5}}

\subsection{Fourier neural operator (FNO)}
\label{app:fno}

The Fourier Neural Operator (FNO), introduced by Li et al. \citep{li2021fourierneuraloperatorparametric}, is a neural operator architecture for learning mappings between infinite-dimensional function spaces, with a particular focus on approximating solution operators of parametric partial differential equations. The key idea of the FNO is to parameterize an integral operator in Fourier space, enabling efficient global convolution and resolution-invariant generalization.

Given a function $v_t$ defined on a spatial domain, the FNO constructs a learnable integral operator $\mathcal{K}(\phi)$ of the form
\begin{equation*}
      \left( \mathcal{K}(\phi)v_t \right)(x)
      =
      \mathcal{F}^{-1} \left( \mathcal{F}(\kappa_{\phi}) \cdot (\mathcal{F}v_t) \right)(x),
\end{equation*}
where $\mathcal{F}$ and $\mathcal{F}^{-1}$ denote the Fourier transform and its inverse, respectively, and $\kappa_{\phi}$ is a learnable kernel. This formulation corresponds to a global convolution in physical space and allows the operator to capture long-range dependencies efficiently.

In practice, the continuous Fourier transforms are replaced by discrete fast Fourier transforms. Specifically:
\begin{itemize}
    \item The operators $\mathcal{F}$ and $\mathcal{F}^{-1}$ are replaced by the discrete fast Fourier transforms $\hat{\mathcal{F}}$ and $\hat{\mathcal{F}}^{-1}$.
    \item The input $\hat{v} \in \mathbb{R}^{s_1 \times \dots \times s_d \times n}$ represents the evaluation of the function $v$ on a uniform grid with $s_j \in \mathbb{N}$ points in each spatial dimension.
    \item The Fourier transform of the kernel $\mathcal{F}(\kappa)$ is replaced by a complex-valued weight tensor
    $\boldsymbol{T} \in \mathbb{C}^{s_1 \times \dots \times s_d \times n \times n}$,
    whose entries correspond to the learnable Fourier modes of the kernel.
\end{itemize}

To reduce computational complexity and improve generalization, the FNO typically employs a low-frequency truncation in Fourier space. Concretely, only a prescribed number of low-frequency modes are retained in each spatial dimension, while the remaining high-frequency modes are set to zero. With this truncation, one Fourier layer of the FNO is given by
\begin{equation*}
    \hat{v}
    \;\mapsto\;
    \hat{\mathcal{F}}^{-1}
    \left(
        \boldsymbol{T}_{\text{low}} \cdot \hat{\mathcal{F}}(\hat{v})
    \right),
\end{equation*}
where $\boldsymbol{T}_{\text{low}}$ denotes the restriction of $\boldsymbol{T}$ to the retained low-frequency Fourier modes. By stacking multiple such Fourier layers and interleaving them with pointwise nonlinearities, the Fourier Neural Operator is able to approximate nonlinear operators with global receptive fields while remaining computationally efficient.

\paragraph{Experimental configuration}
In our experiments, the FNO uses $\FNOlayers$ Fourier layers with channel width $\FNOwidth$, retaining $\FNOmodes$ low-frequency modes per spatial dimension in each spectral convolution layer.

\subsection{Convolutional neural operator (CNO)}
\label{app:cno}

The Convolutional Neural Operator (CNO) \citep{raonic2023convolutional} is designed to approximate operators
\(
\mathbf{G} : \mathcal{B}_w(D,\mathbb{R}^{d_X}) \to \mathcal{B}_w(D,\mathbb{R}^{d_Y})
\)
between band-limited function spaces. By restricting both the input and output to band-limited spaces, the CNO mitigates aliasing errors and enables stable learning across different spatial resolutions.

The CNO represents the target operator as a composition of lifting, convolutional operator layers, and projection, expressed as
\begin{equation*}
    \mathcal{G} : u \mapsto P(u) = v_0
    \mapsto v_1 \mapsto \cdots \mapsto v_L
    \mapsto Q(v_L) = \bar{u},
\end{equation*}
where $P$ is a lifting operator that maps the input function $u$ to a higher-dimensional feature representation, $Q$ is a projection operator to the target space, and the intermediate states $\{v_\ell\}_{\ell=0}^L$ are defined recursively by
\begin{equation*}
    v_{\ell+1}
    =
    \mathcal{P}_{\ell}
    \circ
    \Sigma_{\ell}
    \circ
    \mathcal{K}_{\ell}(v_{\ell}),
    \qquad 1 \le \ell \le L-1.
\end{equation*}
Here, $\mathcal{K}_{\ell}$ denotes a convolutional operator, $\Sigma_{\ell}$ a pointwise nonlinearity, and $\mathcal{P}_{\ell}$ a resolution-changing operator.

Architecturally, the CNO adopts a modified operator U-Net structure composed of an encoder that expands the feature width and contracts the spatial height, enabling multiscale feature extraction, and a decoder that reconstructs high-resolution outputs by successively upsampling the feature maps while reducing the channel dimension. To compensate for the potential loss of high-frequency information caused by upsampling, feature maps from earlier encoder stages are concatenated with decoder features, ensuring the preservation of fine-scale details.

\paragraph{Experimental configuration}
We use a $\CNOlevels$-level encoder--decoder CNO with base channel width $\CNOchannels$ and $\CNOrblocks$ residual (R-)blocks at each resolution.

\subsection{U-shaped neural operator (UNO)}
\label{app:uno}

UNO is a Fourier-based neural operator architecture that extends the Fourier Neural Operator (FNO) with a U-Net-style encoder-decoder structure and skip connections. UNO progressively contracts and expands the spatial domain of intermediate functions, enabling deeper and more memory-efficient operator learning while preserving global interactions through spectral integral operators. Its core operations are performed in the Fourier domain, making it particularly effective for learning global solution operators of PDEs.

In contrast, CNO replaces Fourier integral operators with purely convolutional operators defined in physical space. CNOs rely on hierarchical convolution, interpolation, and local kernel operations to achieve resolution-invariant operator learning. Unlike UNO, CNO does not explicitly use spectral transforms or global Fourier modes; instead, it emphasizes locality, translation equivariance, and scalability through convolutional architectures similar to CNNs.

Therefore, the key distinction is that UNO learns operators primarily through \emph{global spectral interactions} using Fourier operators and domain contraction/expansion, whereas CNO learns operators through \emph{local multiscale convolutions} directly in physical space.

\paragraph{Experimental configuration}
We use an $\UNOdepth$-stage UNO with base channel width $\UNOwidth$ in the lifting layer and scale the width across encoder/decoder stages accordingly.

\subsection{Deep operator network (DeepONet / PI-DeepONet)}
\label{app:deeponet}

Deep Operator Networks (DeepONets) \citep{lu2021learning} learn nonlinear operators by combining two subnetworks: a branch net that encodes the input function and a trunk net that encodes the query location. For an input coefficient field $\kappa$ and a spatial point $y\in\Omega$, DeepONet represents the solution operator in the form
\begin{equation}
u_\theta(y;\kappa)
=
\sum_{j=1}^{p} b_{\theta,j}(\kappa)\, t_{\theta,j}(y),
\label{eq:deeponet_separable}
\end{equation}
where $b_{\theta}(\kappa)\in\mathbb{R}^{p}$ is produced by the branch net and $t_{\theta}(y)\in\mathbb{R}^{p}$ is produced by the trunk net.
Evaluating~\eqref{eq:deeponet_separable} over all grid points yields the full predicted field.

Physics-informed variants of DeepONet (often referred to as PI-DeepONet or PI-DON) incorporate the governing PDE as an additional training signal. In our experiments, we adopt a PI-DeepONet-style training of DeepONet by using the common PI objective in Section~\ref{app:baseline_common}, where the physics loss is computed by automatic differentiation with respect to the trunk coordinates.

\paragraph{Experimental configuration}
For the plain DeepONet baseline, we set the latent dimension to $p=\DONp$. Both the branch and trunk nets are MLPs of depth $\DONbranchDepth$ and $\DONtrunkDepth$, respectively, with hidden width $\DONwidth$.

For the PI-DeepONet baseline, we use branch layers $(d_b,100,100,100,100)$ and trunk layers $(2,100,100,100,100)$, where $d_b=C_{\mathrm{in}}s^2$ is the flattened branch input dimension.

\subsection{Transolver}
\label{app:transolver}

Transolver \citep{Transolver} is an operator learning architecture built on physics-attention, which compresses dense attention over grid tokens into a two-stage slicing--token mechanism. Let $x\in\mathbb{R}^{N\times d}$ denote the per-point features on a grid ($N=s^2$). Transolver first assigns each grid point to one of $S$ latent slices by
\begin{equation*}
A=\mathrm{softmax}\!\left(\frac{W_s x}{\tau}\right)\in\mathbb{R}^{N\times S},
\end{equation*}
then forms slice tokens by normalized pooling,
\begin{equation*}
T=\frac{A^\top x}{A^\top \mathbf{1}}\in\mathbb{R}^{S\times d}.
\end{equation*}
Multi-head self-attention is applied among the $S$ tokens (cost $\mathcal{O}(S^2)$), and the updated tokens are projected back to the grid by deslicing, $\tilde{x}=A T'$. This reduces the quadratic cost in $N$ to $\mathcal{O}(NS+S^2)$ while retaining global information flow, which is beneficial for elliptic problems where long-range dependencies are intrinsic.

\paragraph{Experimental configuration}
We use feature dimension $d=\TSdim$, number of slices $S=\TSslices$, number of attention heads $\TSheads$, and temperature $\tau=\TStau$ for the slicing softmax.

\subsection{Physics-informed Neural Operator (PINO)}
\label{app:pino}

The Physics-Informed Neural Operator (PINO) combines an operator-learning backbone with a PDE residual during training. In our implementation, the PINO baseline uses an FNO-style 2D architecture on the regular $s\times s$ grid. Concretely, if $v_\ell$ denotes the hidden feature map at layer $\ell$, one PINO block takes the form
\[
v_{\ell+1}(x)
=
\sigma\!\left(\mathcal{K}_\ell(v_\ell)(x)+W_\ell v_\ell(x)\right),
\]
where $\mathcal{K}_\ell$ is a Fourier spectral convolution retaining only low-frequency modes and $W_\ell$ is a learnable pointwise linear map. Stacking such blocks yields a neural operator with a global receptive field and efficient spectral mixing.

For physics-informed training, PINO uses the same PI objective from Section~\ref{app:baseline_common}.

\paragraph{Experimental configuration}
We use four spectral blocks, hidden width $64$, retain $12$ Fourier modes in each spatial dimension at every block, use a pointwise projection dimension of $128$, GELU activations, and no domain padding.

\subsection{Finite-Element Operator Network (FEONet)}

FEONet is a neural operator model grounded in the finite element method (FEM), using nodal basis functions $\{\phi_i\}$ to approximate solutions as a linear combination
\begin{equation*}
    u = \sum_{i=1}^{N_H} \alpha_i \phi_i
\end{equation*}
where the index $i$ denotes the nodes of a triangulation of the physical domain and $N_H$ the number of (coarse) nodes. The basic idea of FEONet is to predict the FEM-coefficients $\{\alpha_i\}$. For the Darcy equation \ref{darcy_eq}, the training procedure of FEONet is based on the weak formulation, which reads: Find $u \in V$ such that
\[
\int_\Omega \kappa \nabla u \cdot \nabla v \, dx = \int_\Omega f v \, dx \quad \forall v \in V.
\]
Using the FEM ansatz $u = \sum_j \alpha_j \phi_j$ and testing with $\phi_i$, one obtains the linear system $A \alpha = b$ with the stiffness matrix $A = (A_{ij})$ and the load vector $f = (f_i)$ defined as 
\[
A_{ij} = \int_\Omega \kappa \nabla \phi_j \cdot \nabla \phi_i \, dx, \quad b_i = \int_\Omega f \phi_i \, dx.
\]
for $i,j=1,....,N_H$. FEONet is then trained by sampling permeability fields $\kappa_i$ and computing corresponding stiffness and load vector samples $\{(A_i(\kappa_i), b_i(\kappa_i))\}_{i=1}^N$ and then using the weak-form residual loss over samples with index $i$:
\[
\mathcal{L} = \frac{1}{N} \sum_{i=1}^N \| A_i \alpha_i - b_i \|.
\]
Note that, by using this loss function, we do not need the ground-truth FEM-coefficients $\alpha_i$ to train our model. This way, FEONet can be interpreted as a \textbf{hybrid} Operator Learning model that combines neural networks with the conventional FEM-method, which is \textbf{data-free} and \textbf{fully physics-informed}.

\paragraph{Experimental configuration}
In our implementation of FEONet, we use a CNN-based feature extractor for the input $\kappa$, followed by a Multi-Layer Perceptron to predict the coefficients $\alpha_i$ based on these features. For comparability with LOD-MSNO, the input of the model is the permeability field $\kappa$ on a grid with the fine resolution $h$ and the output is the coefficients $\alpha_i$ for the coarse-scale-basis functions $\phi_i$, $i=1,...,N_H$.

\section{Datasets}
\label{app:datasets}

\paragraph{Coefficient families}
We provide more detailed information about the used datasets here:
\begin{itemize}
\item \textbf{Quantile and filtered quantile coefficients.}
We first sample a smooth Gaussian random field $g$ on the fine nodes using the squared-exponential covariance
\begin{equation*}
\mathbb{E}\bigl[g(x)g(y)\bigr]
=
\sigma_{\mathrm{SE}}^2
\exp\!\left(-\frac{\|x-y\|_2^2}{2\ell_{\mathrm{SE}}^2}\right),
\end{equation*}
where $\ell_{\mathrm{SE}}=\texttt{se\_length}$ and $\sigma_{\mathrm{SE}}=\texttt{se\_sigma}$. The field is exponentiated, $a=\exp(g)$, and empirical quantile bins of $a$ are mapped to a randomly shuffled set of prescribed permeability values
\[
\{0.01,4,8,12,66,100\}
\]
by default. The \texttt{filtered\_quantile} variant additionally blends the nodal values toward the background value $4$ near the boundary using a smooth boundary weight. This is the default coefficient type in the generation script.

\item \textbf{Lognormal Mat\'ern coefficients.}
For \texttt{coeff\_type=lognormal}, we sample a Mat\'ern Gaussian field $Z$ on the fine nodes and define $\kappa=\exp(Z)$. The covariance is
\begin{equation*}
\mathbb{E}\bigl[Z(x)Z(x')\bigr]
=
\frac{\sigma^2}{2^{\nu-1}\Gamma(\nu)}
\left(
\frac{\sqrt{2\nu}}{\ell}\|x-x'\|_2
\right)^{\nu}
K_{\nu}\!\left(
\frac{\sqrt{2\nu}}{\ell}\|x-x'\|_2
\right),
\end{equation*}
where $K_{\nu}$ denotes the modified Bessel function of the second kind. The default Mat\'ern parameters are $\nu=0.5$, correlation length $\ell=0.3$, variance parameter $\sigma=2.0$, and jitter $10^{-8}$, with the same implementation also allowing the lower-contrast lognormal setting by changing these arguments.

\item \textbf{Fine checkerboard coefficients.}
For \texttt{coeff\_type=fine\_checkerboard}, values are sampled independently from the prescribed permeability levels on the fine $h$-scale grid cells. Nodal coefficient values are obtained by indexing into the corresponding fine cell.

\end{itemize}

\paragraph{Per-sample LOD assembly}
For each generated coefficient realization, the script performs the following steps:
\begin{enumerate}
\item \textbf{Fine stiffness assembly.}
The elementwise coefficient is used to assemble a discontinuous Galerkin element stiffness matrix, which is then mapped to the continuous fine space to obtain the fine stiffness matrix $\boldsymbol{A}_h$. The load corresponds to the constant source $f\equiv 1$ and is assembled using the fine mass matrix.

\item \textbf{Localized correctors.}
For each coarse element $T$, the localized corrector is computed on the precomputed fine patch degrees of freedom. In matrix form, the implementation solves the constrained local corrector problem using the local stiffness block, the local right-hand side induced by the three coarse basis functions on $T$, and the patch interpolation constraint $\boldsymbol{I}_{H,T}$. The resulting local corrector matrix has shape $N_{\ell,h}(T)\times 3$ and is stored in the flattened array \texttt{corr\_values}, with \texttt{corr\_ptr} marking the offsets of each patch.

\item \textbf{Global multiscale operator.}
The local correctors are inserted into global fine-grid columns and assembled into the localized correction matrix $\boldsymbol{C}^{\ell}$. The multiscale reconstruction matrix is then
\begin{equation*}
\boldsymbol{G}=\boldsymbol{P}_1-\boldsymbol{C}^{\ell}.
\end{equation*}
After restricting to interior coarse degrees of freedom, the LOD stiffness matrix and load vector are formed as
\begin{equation*}
\boldsymbol{A}_{\mathrm{LOD}}
=
\boldsymbol{G}_0^{\top}\boldsymbol{A}_h\boldsymbol{G}_0,
\qquad
\boldsymbol{f}_{\mathrm{LOD}}
=
\boldsymbol{G}_0^{\top}\boldsymbol{b}_h.
\end{equation*}

\item \textbf{Coarse solve and fine reconstructions.}
The interior LOD system is solved for $\boldsymbol{u}_{\mathrm{LOD}}$, the full coarse vector is padded with zero boundary values, and the fine-grid multiscale solution is reconstructed as
\begin{equation*}
\boldsymbol{u}_{\mathrm{ms}}=\boldsymbol{G}\boldsymbol{u}_{H}.
\end{equation*}
For diagnostic and evaluation purposes, the fine FEM reference solution $\boldsymbol{u}_h$ is also computed by solving the fine interior system with homogeneous Dirichlet boundary conditions.
\end{enumerate}

\section{Implementation details}

\subsection{Neural architecture for LOD-MSNO (Coeff)}
\label{app:arch_lodmimetic}

\paragraph{Input preprocessing}

We form a multi-channel tensor $X\in\mathbb{R}^{B\times C_{\mathrm{in}}\times n_h\times n_h}$ from the raw field $a_h$.
Denote by $b$ the \emph{base channel}:
\begin{equation}
    b(i, j) = \log(\max\{a_h(i,j),\varepsilon\})
\end{equation}
where $\varepsilon>0$ is a small constant to avoid numerical issues. The log transform compresses the dynamic range of high-contrast permeability fields. This is consistent with the common modeling practice in Darcy-flow settings \citep{MR3283907}.

\paragraph{Gradient-magnitude channel (\texttt{add\_grad}).}
To expose sharp interfaces/discontinuities in $a_h$, we append a gradient-magnitude channel computed by forward differences
(with zero padding at the boundary):
\begin{align}
g_x(i,j) &= 
\begin{cases}
b(i,j+1)-b(i,j), & j<n_h-1,\\
0, & j=n_h-1,
\end{cases}
\\
g_y(i,j) &=
\begin{cases}
b(i+1,j)-b(i,j), & i<n_h-1,\\
0, & i=n_h-1,
\end{cases}
\\
g(i,j) &= \sqrt{g_x(i,j)^2 + g_y(i,j)^2 + \delta},
\label{eq:preproc_gradmag}
\end{align}
with $\delta>0$ small.

\paragraph{Coordinate channels (\texttt{add\_coords}).}
Convolutional networks are translation-equivariant, while boundary-value problems on bounded domains are position-dependent.
We therefore append explicit coordinates (CoordConv-style positional encoding \citep{10.5555/3327546.3327630}):
\begin{equation}
x(j)=-1+\frac{2j}{n_h-1},
\qquad
y(i)=-1+\frac{2i}{n_h-1},
\label{eq:preproc_coords}
\end{equation}

\paragraph{Channel count}
The number of input channels is
\begin{equation}
C_{\mathrm{in}} \;=\; 1 \;+\; \mathbb{I}[\texttt{add\_grad}] \;+\; 2\,\mathbb{I}[\texttt{add\_coords}],
\label{eq:Cin}
\end{equation}
and the final preprocessed tensor is formed by concatenation along the channel dimension.

Let $X\in\mathbb{R}^{B\times C_{\mathrm{in}}\times n_h\times n_h}$ be the preprocessed input.

\paragraph{Channel-wise input normalization.}
We compute per-channel statistics $(\mu_c,\sigma_c)$ on the training set after preprocessing:
\begin{equation}
\mu_c \;=\; \mathbb{E}[X_{c}],\qquad
\sigma_c^2 \;=\; \mathbb{E}[X_{c}^2]-\mu_c^2,
\label{eq:x_stats}
\end{equation}
where expectations are taken over all training samples and spatial locations.
We then normalize as
\begin{equation}
\widetilde{X}_{b,c,i,j}
\;=\;
\frac{X_{b,c,i,j}-\mu_c}{\sigma_c+\epsilon}.
\label{eq:x_norm}
\end{equation}
The values $\mu,\sigma$ are stored as non-trainable buffers and reused at test time.

\paragraph{Per-component output normalization.}
Let $u_{\mathrm{int}}\in\mathbb{R}^{D}$ denote the target interior coefficient vector.
We compute per-component statistics $(\mu^{(u)}_k,\sigma^{(u)}_k)$ on the training set and normalize:
\begin{equation}
\widetilde{u}_k
\;=\;
\frac{u_{\mathrm{int},k}-\mu^{(u)}_k}{\sigma^{(u)}_k+\epsilon}.
\label{eq:u_norm}
\end{equation}
The core network is trained to predict $\widehat{\widetilde{u}}$; inference returns the decoded coefficients
$\widehat{u}_{\mathrm{int}}$ via the inverse transform of \eqref{eq:u_norm}.

\paragraph{Building block for CNN-based model: GNAct.}
We use the block
\begin{equation}
\mathrm{GNAct}(z)
\;=\;
\mathrm{GELU}\!\left(\mathrm{GN}\!\left(\mathrm{Conv}_{k\times k,s}(z)\right)\right),
\label{eq:gnact}
\end{equation}
where $\mathrm{GN}$ denotes Group Normalization \citep{Wu_2018_ECCV}.
For GroupNorm, we use $G=\min(8,C)$ groups for a $C$-channel tensor.

\paragraph{Residual block (ResBlock2D).}
At a fixed resolution and channel width $C$, we use a ResNet-style block:
\begin{equation}
    \mathrm{ResBlock}(x) = \mathrm{GELU}\left(x + \mathrm{GN} \left( \mathrm{Conv}_{3 \times 3, 1}\left( \mathrm{GNAct}(x) \right) \right) \right)
    \label{eq:resblock}
\end{equation}

\paragraph{Hyperparameters used.}
We use $\texttt{base}=32$, GroupNorm with $G=\min(8,C)$ groups,
$\texttt{add\_grad}=\texttt{True}$, and $\texttt{add\_coords}=\texttt{True}$.

\subsection{Neural architecture for predicting the basis correction}
\label{app:basis_architecture}

\paragraph{Patch-wise supervised learning target}

Each training example corresponds to one coefficient realization and one coarse patch. For a fixed patch $T\in\mathcal{T}_H$, the input is the coefficient field restricted to the localized region $\mathcal{N}^{\ell}(T)$ and stored on a rectangular patch grid. Since boundary and interior patches can have different physical sizes and different active fine degrees of freedom, each patch is embedded into a common rectangular patch canvas using padding.
Thus, all patches have the same input shape and can be processed by one convolutional network. 

The target consists of the local correction coefficients
\begin{equation}
\boldsymbol{w}_{T,j}(\omega)
=
\bigl(w_{T,j,1}(\omega),\ldots,w_{T,j,N_{\ell,h}(T)}(\omega)\bigr)^{\top},
\qquad j=1,\ldots,N_H,
\end{equation}
restricted in practice to the coarse basis functions whose support intersects $T$. Since a two-dimensional triangular coarse element has three local coarse vertices, the implementation stores the local corrector target as an array with three output components per fine patch degree of freedom. Thus, for a patch with $N_{\ell,h}(T)$ active fine degrees of freedom, the supervised target has shape
\begin{equation}
Y_T(\omega)\in\mathbb{R}^{N_{\ell,h}(T)\times 3}.
\end{equation}
The basis dataset therefore consists of patch-sample pairs
\begin{equation}
\mathcal{D}_{\mathrm{basis}}
=
\left\{
\left(\kappa_i|_{\mathcal{N}^{\ell}(T)},Y_T(\omega_i)\right)
:\; i=1,\ldots,M,\; T\in\mathcal{T}_H
\right\}.
\end{equation}
In the implementation, this dataset is flattened over samples and patches, so that one training epoch iterates over all pairs $(i,T)$.

\paragraph{PatchCorrectionNet architecture}

The core of \texttt{PatchCorrectionNet} is a U-Net-style convolutional encoder--decoder with skip connections. It uses the same basic Conv--GroupNorm--GELU and residual blocks as the coefficient architecture, but preserves the patch-grid structure and returns a dense three-channel patch image. Starting from
\begin{equation}
X_T\in\mathbb{R}^{B\times 3\times H_p\times W_p},
\end{equation}
the network applies the following stages:
\begin{itemize}
\item \textbf{Stem:} a non-downsampling \texttt{DownStage} mapping
$3\rightarrow c_1$ channels, followed by residual blocks.
\item \textbf{Encoder:} three stride-two downsampling stages with channel widths
\begin{align*}
 c_1&=\texttt{base}, &
 c_2&=\min(2c_1,\texttt{max}), \\
 c_3&=\min(4c_1,\texttt{max}), &
 c_4&=\min(8c_1,\texttt{max}).
\end{align*}
Each stage consists of a Conv--GroupNorm--GELU block followed by \texttt{blocks\_per\_stage} residual blocks.
\item \textbf{Bottleneck:} \texttt{bottleneck\_blocks} residual blocks at width $c_4$.
\item \textbf{Decoder:} three upsampling stages using bilinear interpolation, concatenation with the corresponding encoder skip feature, a Conv--GroupNorm--GELU projection, and residual blocks.
\item \textbf{Head:} a Conv--GroupNorm--GELU block, one residual block, and a final $1\times1$ convolution producing
\begin{equation}
\widehat{Y}_T^{\mathrm{grid}}(\omega)
\in
\mathbb{R}^{B\times 3\times H_p\times W_p}.
\end{equation}
\end{itemize}
The three output channels correspond to the correction coefficients associated with the three local coarse basis functions on the patch. The architecture is fully convolutional on the patch canvas, and the use of skip connections is important because local correctors are spatially localized and may contain sharp features near coefficient jumps or patch boundaries.

\paragraph{Patch readout and constraint projection}

The network output is a dense patch image, whereas the correction target is stored only on active fine degrees of freedom. Let
\begin{equation}
\mathcal{C}_T=\{(x_m,y_m)\}_{m=1}^{N_{\ell,h}(T)}
\end{equation}
denote the list of local grid coordinates corresponding to these active fine degrees of freedom. The dense prediction is read out as
\begin{equation}
\widehat{Y}_T(\omega)
=
\left(
\widehat{Y}_T^{\mathrm{grid}}(:,y_m,x_m)
\right)_{m=1}^{N_{\ell,h}(T)}
\in\mathbb{R}^{N_{\ell,h}(T)\times 3}.
\end{equation}
Because the LOD correctors must lie in the local fine-scale space $W^{\ell}(T)=\ker(\mathcal{I}_H)$, the implementation applies a hard algebraic projection before computing the loss. With the precomputed patch matrices denoted by $\boldsymbol{P}_{T}^{\mathrm{left}}$ and $\boldsymbol{P}_{T}^{\mathrm{right}}$, the projected prediction is
\begin{equation}
\widehat{Y}_{T}^{\mathrm{proj}}
=
\widehat{Y}_T
-
\boldsymbol{P}_{T}^{\mathrm{left}}
\boldsymbol{P}_{T}^{\mathrm{right}}
\widehat{Y}_T,
\end{equation}
and the same projection is applied to the target $Y_T$. This projection enforces compatibility with the kernel constraint induced by $\mathcal{I}_H$ and prevents the network from being penalized for components outside the admissible local detail space.

\paragraph{Hyperparameters used}
For \texttt{PatchCorrectionNet}, we use
\begin{align*}
\texttt{base\_channels} &= 48, &
\texttt{max\_channels} &= 256, &
\texttt{blocks\_per\_stage} &= 2, \\
\texttt{bottleneck\_blocks} &= 4, &
\texttt{dropout} &= 0.05.
\end{align*}
The output dimension is fixed to three channels, corresponding to the three local coarse basis functions associated with a triangular coarse element. During validation, predictions from all patches are assembled into the global correction matrix $\widehat{\boldsymbol{Q}}_h^{\ell}$ and combined with the coarse coefficients to reconstruct the fine-scale LOD solution.

\end{document}